\DeclareMathAlphabet{\mathpzc}{OT1}{pzc}{m}{it}
\newtheorem{theorem}{Theorem}[section]
\newtheorem{proposition}[theorem]{Proposition}
\newtheorem{corollary}[theorem]{Corollary}
\newtheorem{conjecture}[theorem]{Conjecture}
\newtheorem{lemma}[theorem]{Lemma}
\newtheorem*{theoremnonum}{Theorem}
\theoremstyle{definition}
\newtheorem{definition}[theorem]{Definition}
\theoremstyle{remark}
\newtheorem{remark}[theorem]{Remark}
\def\varle{\leqslant}
\newcommand{\CB}{{\mathcal B}}
\newcommand{\CE}{{\mathcal E}}
\newcommand{\CG}{{\mathcal G}}
\newcommand{\CI}{{\mathcal I}}
\newcommand{\CJ}{{\mathcal J}}
\newcommand{\CM}{{\mathcal M}}
\newcommand{\CN}{{\mathcal N}}
\newcommand{\CS}{{\mathcal S}}
\newcommand{\CV}{{\mathcal V}}
\newcommand{\CW}{{\mathcal W}}
\newcommand{\CX}{{\mathcal X}}
\newcommand{\CY}{{\mathcal Y}}
\newcommand{\CZ}{{\mathcal Z}}
\newcommand{\SB}{{\mathscr B}}
\newcommand{\SM}{{\mathscr M}}
\newcommand{\SZ}{{\mathscr Z}}
\newcommand{\hCW}{{\widehat\CW}}
\newcommand{\hCG}{{\widehat\CG}}
\newcommand{\hCS}{{\widehat\CS}}
\newcommand{\hV}{{\widehat V}}
\newcommand{\hR}{{\widehat R}}
\newcommand{\hX}{{\widehat X}}
\newcommand{\hH}{{\widehat H}}
\newcommand{\hbH}{{\widehat{\bH}}}
\newcommand{\hw}{\widehat w}
\newcommand{\DZ}{{\mathbb Z}}
\newcommand{\DN}{{\mathbb N}}
\newcommand{\DQ}{{\mathbb Q}}
\newcommand{\DF}{{\mathbb F}}
\newcommand{\bH}{{\mathbf H}}
\newcommand{\bs}{{\mathbf s}}
\newcommand{\height}{{\operatorname{ht}}}
\newcommand{\ch}{{\operatorname{char}\, }}
\newcommand{\Hom}{{\operatorname{Hom}}}
\newcommand{\supp}{{\operatorname{supp}}}
\newcommand{\coker}{{\operatorname{coker}}}
\newcommand{\im}{{\operatorname{im}}}
\newcommand{\dual}{{\mathsf D}}
\newcommand{\rk}{{{\operatorname{rk}}}}
\newcommand{\ol}{\overline}
\newcommand{\ul}{\underline}
\newcommand{\ev}{\operatorname{ev}}
\newcommand{\id}{{\operatorname{id}}}
\newcommand{\Quot}{{\operatorname{Quot\,}}}
\newcommand{\linie}{{\,\text{---\!\!\!---}\,}}
\newcommand{\llinie}{{\text{---\!\!\!---\!\!\!---}}}
\newcommand{\comment}[1]{}
\begin{document}

\pagenumbering{arabic}
\title[An upper bound on exceptional characteristics]{An upper bound on the exceptional characteristics for Lusztig's character
formula}
\author[]{Peter Fiebig}
\thanks{The author is partially supported by a grant of the
  Landesstiftung Baden--W\"urttemberg }

\begin{abstract} We develop and study a Lefschetz theory in a
  combinatorial category associated to a root system and derive an 
  upper bound on the exceptional characteristics for Lusztig's
  formula  for the simple rational characters of a
  reductive algebraic group. Our bound is huge compared to the Coxeter
  number. It is, however, given by an explicit formula.
\end{abstract}

\address{Mathematisches Institut, Universit{ä}t Freiburg, 79104 Freiburg, Germany}
\email{peter.fiebig@math.uni-freiburg.de}
\maketitle

\section{Introduction}
Lusztig's conjecture on the irreducible rational characters of a reductive
algebraic group over a field $k$ of  positive characteristic has in the last 20 years
been approached from various directions. It is known that Lusztig's
formula holds for large enough characteristics (with respect to a
fixed root system), yet so far it is unknown 
what ``large enough'' means in all but low rank cases (conjecturally, the characteristic should
be at least the Coxeter number of the root system). So in
almost all explicitely given cases the irreducible  characters of a reductive
group  are unknown.  This is  certainly not a completely satisfying
situation. 

\subsection{Lusztig's conjecture as a moment graph problem} The most recent approach towards Lusztig's conjecture, contained in
the articles \cite{FieModRep} and \cite{FieCharIrr}, gives a
connection between representations of reductive groups over $k$ (more precisely, representations of their Lie
algebras) and the theory of $k$-sheaves on moment graphs. In particular,
it is shown in the above papers that Lusztig's conjecture follows from
a similar multiplicity conjecture for the Braden-MacPherson sheaves
(with coefficients in the field $k$) on
an  affine moment graph.

The moment graph theory has the advantage that it can be formulated
and studied in a relatively elementary way. In particular, one can
determine the $p$-smooth locus on an affine moment graph by quite
elementary arguments (cf.~\cite{FieZdual}). The result is in
complete accordance with Lusztig's conjecture and  yields its
multiplicity one case for all fields with characteristic at least the
Coxeter number.  Unfortunately, this cannot be reinterpreted in terms of characters. 

\subsection{A Lefschetz theory for sheaves on moment graphs}
In the present paper we study a Lefschetz theory for  moment graphs and  show
that the multiplicity conjecture is implied by a Hard Lefschetz
conjecture for the Braden-MacPherson sheaves. Then we prove the latter
conjecture in the characteristic zero case following the argumentation
in  \cite{SoeGAFA}, where the equivalent category of {\em special
  bimodules} is considered. The essential ingredient for the proof is
the fact that our objects occur as  equivariant
intersection cohomologies of Schubert varieties.  The
Hard Lefschetz conjecture for moment graphs  then follows from the Hard
Lefschetz theorem for projective varieties.

\subsection{From $\DQ$ to $\DF_p$ via $\DZ$}

In positive characteristic, Soergel's approach does not work any
more for several reasons, one of them being that the correspondence between Braden-MacPherson sheaves  and intersection
cohomology cannot be established so far. Yet one can deduce the
conjecture for big enough characteristics from its characteristic zero
analogue. For this one first replaces
the Braden-MacPherson sheaves by their global sections which naturally
are modules over the (commutative, associative) {\em structure algebra}
$\CZ$ associated to the graph. Then one translates the multiplicity conjecture into the
language of $\CZ$-modules. The advantage is that there is an
alternative way to construct those global sections as direct summands
of Bott--Samelson modules. These turn out to be defined over
the integers, so that we  can apply a base change argument. A closer
look at the base change procedure  opens a way to give an
estimate on the exceptional primes. 

In the particular case that is
connected to Lusztig's conjecture we get the following result. For a
fixed root system $R$ we let $\hw_0$ be  the element in the affine Weyl group corresponding to
the lowest alcove in the anti-fundamental box and we let  $l=l(\hw_0)$
be its length. For any reduced expression $\bs$ of $\hw_0$ we define the numbers $r=r(\bs)$, $d=d(\bs)$ and
$N=N(\bs)$ explicitly in terms of the affine Hecke algebra (cf.~Section
\ref{subsec-maintheo}). Then we set
$$
U(\hw_0):=\min_{\bs}r!(r!(r-1)!N^{l(\hw_0)+2d})^r.
$$

\begin{theoremnonum} Suppose that $\ch
  k>U(\hw_0)$. Then the Hard Lefschetz
  conjecture for the Braden-MacPherson sheaves, hence Lusztig's
  conjecture for the algebraic groups with root system $R$, holds. 
\end{theoremnonum}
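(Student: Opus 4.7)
The plan is to follow the reduction chain laid out in the introduction and to turn the final base change step into a quantitative statement. First I would invoke the results of \cite{FieModRep} and \cite{FieCharIrr} to reduce Lusztig's conjecture to the multiplicity conjecture for the Braden--MacPherson sheaves $\CB(w)$ on the relevant affine moment graph with coefficients in $k$. The Lefschetz theory developed earlier in the paper then further reduces the multiplicity statement to the Hard Lefschetz conjecture for these sheaves; so it suffices to exhibit, for each $w$, a degree two element acting as a Lefschetz operator on the global sections $B(w):=\Gamma(\CB(w))$ viewed as a graded module over the structure algebra $\CZ$.

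Next I would lift the picture from $k$ to $\DZ$. Fixing a reduced expression $\bs$ of $w$, the Bott--Samelson module $B(\bs)$ is naturally defined over $\DZ$; after base change to $k$ the Braden--MacPherson object $B(w)_k$ appears as a direct summand of $B(\bs)_k$, while after base change to $\DQ$ the module $B(w)_\DQ$ is identified with the equivariant intersection cohomology of the Schubert variety indexed by $w$. This places us in the setting of \cite{SoeGAFA}: the classical Hard Lefschetz theorem for projective varieties, transported via the equivalence with Soergel's special bimodules, yields Hard Lefschetz for $B(w)_\DQ$ with respect to a suitable Lefschetz element $\eta$. The remaining task is to show that this $\DQ$-statement descends to $k$ as soon as $\ch k$ is large enough.

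For the descent I would work with the bilinear pairing induced by $\eta^{l(w)-2i}$ on the degree $i$ part of an integral model obtained from $B(\bs)_\DZ$. Over $\DQ$ this pairing is non-degenerate; the primes at which it can become degenerate are precisely the prime divisors of its Gram determinant. The task is therefore to produce an explicit divisibility bound for this determinant in terms of the combinatorial data of $B(\bs)$. Here the invariants $r(\bs)$, $d(\bs)$ and $N(\bs)$ read off from the affine Hecke algebra enter: $N^{l(\hw_0)+2d}$ will bound the entries of the Gram matrix, the factor $r!(r-1)!$ will come from a Hadamard-type count of admissible index patterns in the Bott--Samelson basis, and the outer exponent $r$ with the leading $r!$ will come from the passage from $B(\bs)$ to the indecomposable summand $B(\hw_0)$ via idempotent projection. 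Minimising over reduced expressions $\bs$ produces the function $U(\hw_0)$.

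The hard part will be this last combinatorial and numerical estimate. Hard Lefschetz itself is true over $\DQ$ for geometric reasons and is essentially a black box here; what requires genuine work is to control, uniformly in $\bs$, the denominators that appear when one cuts out $B(\hw_0)_\DQ$ from $B(\bs)_\DQ$ by a rational idempotent and expresses the Lefschetz pairing in an integral basis. Once these denominators are bounded by the polynomial expression $r!(r!(r-1)!N^{l(\hw_0)+2d})^r$, any prime $p>U(\hw_0)$ is automatically invertible on the Gram determinant, Hard Lefschetz descends from $\DQ$ to $k=\DF_p$, the Hard Lefschetz conjecture for the Braden--MacPherson sheaves follows, and hence, via the reductions quoted above, so does Lusztig's conjecture.
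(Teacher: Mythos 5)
Your reduction chain is correct at the top level and matches the paper: Lusztig's conjecture is reduced (via \cite{FieModRep}, \cite{FieCharIrr}) to the multiplicity conjecture \ref{conj-KL}, which the Lefschetz theory of Section \ref{sec-Leftheo} reduces to the Hard Lefschetz Conjecture \ref{conj-HL}; the characteristic-zero case is handled geometrically following \cite{SoeGAFA}, and the remaining content is a quantitative base change from $\DQ$ to $k$. The divergence, and the gap, is in how you propose to execute that base change.

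You want to cut $\CB(\hw_0)$ out of the Bott--Samelson module $\CB(\bs)$ over $\DZ$ by a rational idempotent, then bound the Gram determinant of the Lefschetz pairing on that integral model. This is precisely the step the paper is careful \emph{not} to take, because controlling the denominators of such an idempotent is genuinely hard and is not where the bound $U$ comes from. Instead the paper never descends $\CB(w)$ itself to $\DZ$. It constructs an explicit $\DZ$-form $\CX_\DZ(\bs)$ of the Bott--Samelson module with a distinguished basis $E_\DZ(\bs)$ (Section \ref{sec-BSint}), represents the inclusion $\CB_K(\bs)_x\hookrightarrow\CB_K(\bs)^x$ by the matrix $X=(E^{-1})^TPE^{-1}$, and reads the Lefschetz datum at $x$ off the minors of $\ol X$ over $K[t]$ (Smith normal form); it then bounds those minors by writing the entries of $X'=(\det E)^2X$ as sums of products of roots (Lemmas \ref{lemma-estxij}, \ref{lemma-minors}). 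The passage from $\CB(\bs)$ to the indecomposable $\CB(w)$ is not via an integral idempotent but via the purely formal inductive argument of Lemma \ref{lemma-lefdata} and Proposition \ref{prop-lefcoin}: once the Lefschetz data of $\CB_k(\bs)$ and $\CB_\DQ(\bs)$ are shown to agree (Theorem \ref{theorem-lefdata}), the decomposition patterns over $k$ and $\DQ$ coincide, and one peels off the lower summands by induction on $w$. As a consequence your accounting of the factors in $U(\hw_0)$ is off: the $r!(r-1)!$ counts terms in a determinant and its adjugate, the outer exponent $r$ and leading $r!$ come from the size $s\le r$ of a minor of $X'$, and $N^{l+2d}$ bounds the image under $\tau$ of a product of roots of bounded height; none of these comes from a Hadamard estimate or an idempotent projection. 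To make your version work you would need an independent and uniform bound on the denominators of the projector onto $\CB(\hw_0)_\DQ$ inside $\CB(\bs)_\DQ$, which you do not supply and which the paper's argument is designed to avoid.
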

 Admittedly, the above bound is very far from the Coxeter number. It
is, however, an explicit number and in the general case the only bound
that is known  to the author.  

After having established the Lefschetz theory, we derive the above estimate using  the most basic linear algebra.  I am sure that any careful reader can  immediately come up with an improved bound. I strongly doubt,
however, that the methods outlined in this paper suffice to come
anywhere in the vicinity of the  Coxeter number.

\subsection{Acknowledgements} Parts of the research for this paper were done
 at the Osaka City University and at the Emmy Noether center in Erlangen. I am grateful towards both institutions for their generous support.

\section{Sheaves on the affine Bruhat graph}

In this section we introduce the {\em
  Braden--MacPherson sheaves} on  affine Bruhat graphs. We state the
main conjecture on the graded rank of their stalks and quickly review
the connection to Lusztig's conjecture on simple rational characters
of reductive groups. As a first step, let us recall the notion of a
moment graph.

\subsection{Moment graphs}

Let $Y\cong\DZ^r$ be a lattice. 
\begin{definition} A {\em moment graph} $\CG=(\CV,\CE,\alpha,\varle)$ over $Y$ is given by the
  following data.
\begin{enumerate} 
\item A graph $(\CV,\CE)$ with set of vertices $\CV$ and set of edges
  $\CE$.
\item A map $\alpha\colon \CE\to Y\setminus\{0\}$, called the {\em
    labelling}.
\item A partial order ``$\varle$'' on $\CV$ such that two vertices are
  comparable if they are connected by an edge.
\end{enumerate}
\end{definition}

We use the notation $E\colon x\linie y$ for an edge $E$ connecting $x,y\in\CV$.
The partial order allows us to endow each edge with a direction. We
write $E\colon x\to y$ if $x<y$. We write $E\colon
x\stackrel{\alpha}\llinie y$ or $E\colon x\stackrel{\alpha}\to y$ if
we want to specify the label $\alpha=\alpha(E)$ of $E$.

Each subset $\CI\subset \CV$ defines a (full) sub-moment graph $\CG_I$ of
$\CG$ in the obvious way. For $w\in \CV$ we denote
by $\CG_{\le w}$ the  sub-moment graph associated to the set
$\{x\in\CV\mid x\le w\}$. In the following we write $\{\le w\}$ instead of
$\{x\in\CV\mid x\le w\}$, and we write $\{<w\}$, $\{>w\}$, etc.~ for
the similarly defined sets.

We will mostly deal in this article with the moment graphs that are associated to an
affine root system. Let us quickly introduce the notions and notations
that we need.

\subsection{Finite root systems} 
Let $V$ be a finite dimensional $\DQ$-vector space and let $R\subset
V$ be an irreducible, reduced, 
finite root system in the sense of Bourbaki (cf.~\cite{Bourbaki}). We denote by $V^\ast=\Hom(V,\DQ)$ the dual space, and write $\langle\cdot,\cdot\rangle\colon V\times V^\ast\to \DQ$ for the natural pairing. For $\alpha\in R$ we denote by $\alpha^\vee\in V^\ast$
its coroot and we let $R^\vee:=\{\alpha^\vee\mid\alpha\in R\}\subset
V^\ast$ be the dual root system. Let
$$
X:=\{\lambda\in V\mid
\langle\lambda,\alpha^\vee\rangle\in\DZ\text{ for all $\alpha\in
  R$}\}
$$
be the weight lattice and
$$
X^\vee:=\{v\in V^\ast\mid \langle\alpha,v\rangle\in\DZ\text{ for all
  $\alpha\in R$}\}
$$
the coweight lattice. We have $\DZ R\subset X$ and $\DZ R^\vee\subset
X^\vee$, and $(\DZ R, X^\vee)$ and $(X,\DZ R^\vee)$ are dual lattices
under the  pairing $\langle\cdot,\cdot\rangle$.

\subsection{The affine Weyl group}
For $\alpha\in R$ and $n\in\DZ$ we define the affine reflection
$s_{\alpha,n}\colon V^\ast\to V^\ast$ by 
$$
s_{\alpha,n}(v):=v-(\langle\alpha,v\rangle-n)\alpha^\vee.
$$
The affine Weyl group $\hCW$ is the subgroup of affine transformations
on $V^\ast$ that is generated by $\{s_{\alpha,n}\}_{\alpha\in
  R,n\in\DZ}$. Since $(-\alpha)^\vee=-\alpha^\vee$ we have $s_{\alpha,n}=s_{-\alpha,-n}$.

Adding an additional dimension allows us to linearize the above affine
action. If we set $\hV:=V\oplus\DQ$, then we have $\hV^\ast=V^\ast\oplus\DQ$ where we extend the natural pairing between $V$ and $V^\ast$ in the obvious way: $\langle (\lambda,m),
(v,n)\rangle=\lambda(v)+mn$. Let
$s_{\alpha,n}$ act on $\hV^\ast$ by the formula 
$$
s_{\alpha,n}(v,m):=(v-(\langle\alpha,v\rangle-mn)\alpha^\vee,m).
$$
This extends to a linear action of $\hCW$ on $\hV^\ast$. The level spaces
$\hV^\ast_m:=\{(v,m)\mid v\in V^\ast\}$ are stabilized by $\hCW$, and
on the affine space $\hV^\ast_1$ we recover the affine action that we
introduced above.

\subsection{The affine root system}
The hyperplane of fixed points of $s_{\alpha,n}$ in $\hV^\ast$ is 
$$
\hH_{\alpha,n}:=\{(v,m)\mid
\langle\alpha,v\rangle=mn\}.
$$
Set $\hX:=X\oplus\DZ$ and consider
$\hX$  as a subset in $\hV$ by the obvious
embedding. We denote by $\delta$ the element $(0,-1)\in\hX$. For $\alpha\in R$, $n\in\DZ$, the {\em
  affine root} $\alpha+n\delta\in\hX$ is then an equation of
$\hH_{\alpha,n}$. The set of {\em (real) affine roots} is, by definition,
$$
\hR:=\{\alpha+n\delta\mid \alpha\in R,n\in\DZ\}\subset\hX.
$$

Now let us choose a system $R^+\subset R$ of positive roots. Set
$$
\hR^+:=\{\alpha+n\delta\mid \alpha\in R, n>0\}\cup
\{\alpha+n\delta\mid \alpha\in R^+, n\ge 0\}.
$$
Then $\hR=\hR^+\dot\cup -\hR^+$. Let us denote by $\Pi\subset R^+$
the subset of simple roots, and by $\gamma\in R^+$ the largest
root. The set of {\em simple affine roots} is 
$$
\widehat \Pi:=\Pi\cup \{-\gamma+\delta\}.
$$
Note that each $\beta\in\hR^+$ has a unique expression as a sum of
elements in $\widehat\Pi$. 
Set
$$
\hCS:=\{s_{\alpha,0}\mid \alpha\in \Pi\}\cup\{s_{\gamma,1}\}.
$$
Then $\hCS$ is a set of generators of $\hCW$, and $(\hCW,\hCS)$ is a
 Coxeter system. This means that we have a corresponding length
function $l\colon \hCW\to \DN$ and a Bruhat order ``$\varle$'' on $\hCW$.
We can now associate the following moment graph to our data.
\begin{definition}\label{def-affmom} The {\em affine moment graph}
  $\hCG=\hCG(\hR,\hR^+)$ is given by the following data:
\begin{enumerate}
\item The underlying lattice is $\hX$.
\item The set of vertices is $\CV:=\hCW$.
\item The vertices $x,y\in\hCW$ are connected by a
(single) edge $E$ if there are $\alpha\in R$ and $n\in\DZ$ with
$x=s_{\alpha,n}y$. The edge $E$ is then labelled by the unique positive root in
$\{\alpha+n\delta,-\alpha-n\delta\}$.
\item The order ``$\varle$'' is the
affine Bruhat order.
\end{enumerate}
\end{definition}

Let us now return to the case of arbitrary moment graphs.
\subsection{Base fields and the GKM-property}
Let $Y$ be a lattice and $\CG$ a moment graph over $Y$. Let $K$ be  a field of characteristic $\ne 2$ and let
$$
Y_K := Y\otimes_\DZ K 
$$ 
be the  $K$-vector space associated to the lattice $Y$. We will often denote by $\lambda$ also the element
$\lambda\otimes 1\in Y_K$. 

\begin{definition}\begin{enumerate}
\item Let $\CI$ be a subset of $\CV$. We say that
  $(K,\CI)$ is a {\em GKM-pair} if the labels on any pair of edges of
  $\CG_{\CI}$ that share a common vertex are linearly independent in
  $Y_K$, i.e.~ if for all  $x,y,y^\prime\in\CI$, $y\ne y^\prime$, and edges $E\colon x\linie y$ and
  $E^\prime\colon x\linie y^\prime$ we have
  $\alpha(E)\not\in K\alpha(E^\prime)$. 
\item If $w\in\CV$, then we say that $(K,w)$ is a GKM-pair if
  $(K,\{\le w\})$ is a GKM-pair. 
\end{enumerate}
\end{definition}

Denote by $S:=S_K(Y_K)$ the symmetric algebra of the $K$-vector
space $Y_K$.
We consider $S$ as a $\DZ$-graded algebra. The grading is determined
by setting $\deg \lambda:=2$ for all non-zero $\lambda$ in
$Y_K$. Almost all $S$-modules in the following will be assumed to be
$\DZ$-graded. 

For a $\DZ$-graded space $M$ and $n\in \DZ$ we denote by $M_{\{n\}}$
its homogeneous component of degree $n$.  In the following, almost all maps  $f\colon M\to N$
of $\DZ$-graded spaces will be of degree zero, i.e.~they satisfy
$f(M_{\{n\}})\subset N_{\{n\}}$ for all $n\in \DZ$.  For $l\in\DZ$ we denote by
$M\{l\}$ the graded space obtained from $M$ by shifting the grading  in such a way that $M\{l\}_{\{n\}}=M_{\{l+n\}}$.

\subsection{Sheaves on moment graphs}
Now we come to one of the central definitions for our theory.

\begin{definition}
A {\em $K$-sheaf} $\SM$ on the moment graph $\CG$ is given by the data
$\left(\{\SM^x\},\{\SM^E\},\{\rho_{x,E}\}\right)$, where 
\begin{enumerate}
\item $\SM^x$ is an $S$-module for any vertex $x$,
\item $\SM^{E}$ is an $S$-module with $\alpha(E) \SM^E=0$ for
  any edge $E$,
\item $\rho_{x,E}\colon\SM^x\to\SM^E$ is a homomorphism of
  $S$-modules for any vertex $x$ lying on the edge $E$.
\end{enumerate}
\end{definition}
The {\em support} of a sheaf $\SM$ is defined as 
$\supp\, \SM:=\{x\in \CV\mid \SM^x\ne 0\}$.
For $x\in\CV$ we call the space
$\SM^x$ the {\em stalk} of $\SM$ at $x$. Let us define $\CE^x\subset
\CE$ as the set of edges that contain the vertex $x$, i.e.~
$$
\CE^x:=\{E\in\CE\mid E\colon x\linie y\text{ for some $y\in\CV$}\}.
$$
Then we define   the {\em
  costalk} of $\SM$ at $x$ as 
\begin{align*}
\SM_x&:=\{m\in \SM^x\mid \rho_{x,E}(m)=0\text{ for all $E\in\CE^x$}\}.
\end{align*}
By definition, $\SM_x$ is a sub-$S$-module of $\SM^x$. Later we also need
the following intermediate module. For $x\in\CV$ let $\CE^{\delta
  x}\subset\CE^x$ be the set of all directed edges {\em starting at
  $x$}, i.e.~ 
$$
\CE^{\delta x}=\{E\in \CE\mid E\colon x\to y\text{ for some $y\in\CV$}\}.
$$
Recall that the notation $E\colon x\to y$ implies $x<y$. We set
$$
\SM_{[x]}:=\{m\in \SM^x\mid \rho_{x,E}(m)=0\text{ for all
  $E\in\CE^{\delta x}$}\}.
$$
Then we have inclusions $\SM_x\subset\SM_{[x]}\subset\SM^x$.  

\subsection{Sections of sheaves}

The {\em space of global sections} of a sheaf $\SM$ is by definition
the space
$$
\Gamma(\SM):=\left.\left\{(m_x)\in\prod_{x\in\CV}\SM^x\right|
\,\begin{matrix}
\rho_{x,E}(m_x)=\rho_{y,E}(m_y) \\
\text{ for all edges $E\colon x\linie y$}
\end{matrix}
\right\}.
$$
More generally, for a subset  $\CI$  of $\CV$ we define the {\em
  sections over $\CI$} by
$$
\Gamma(\CI,\SM):=\left.\left\{(m_x)\in\prod_{x\in\CI}\SM^x\right|
\,\begin{matrix}
\rho_{x,E}(m_x)=\rho_{y,E}(m_y) \\
\text{ for $E\colon x\linie y$ with $x,y\in\CI$}
\end{matrix}
\right\}.
$$
Note that for each pair $\CI^\prime\subset\CI$ there is an obvious
restriction map $\Gamma(\CI,\SM)\to \Gamma(\CI^\prime,\SM)$.

\subsection{Braden-MacPherson sheaves}

Now we introduce the sheaves $\SB(w)$, for $w\in\CV$, that
are most important for our approach. They made their first appearance in the article \cite{BMP},
where they were called the {\em canonical sheaves}. 

The construction of $\SB(w)$
is easily motivated by the following problem. Suppose that we want to
find  a global section $m=(m_x)$ of a sheaf $\SM$ on $\CG$. We
could try to construct $m_x$  vertex by vertex following the partial
order (note that so far none of our definitions used the partial
order), i.e.~suppose that we are given an
element $m_y\in\SM^y$ for all $y>x$ such that
$(m_y)\in\Gamma(\{>x\},\SM)$. Then we want to find an extension to the
vertex $x$, i.e.~ an element $m_x\in\SM^x$ such that
$((m_y),m_x)\in\Gamma(\{\ge x\},\SM)$. This means that $m_x$ should
have the following property: For each edge $E$ starting at $x$ and
ending at some $y>x$ we have $\rho_{x,E}(m_x)=\rho_{y,E}(m_y)$. This immediately leads to the following definitions. 

Fix  $x\in \CV$ and recall that we defined $\CE^{\delta x}\subset\CE$
as the set of all edges that start at $x$.
Let   
$$
\CV^{\delta x}:=\{y\in\CV\mid \text{ there is $E\colon x\to
  y\in\CE^{\delta x}$}\}
$$
be the set of the corresponding endpoints. For a sheaf $\SM$ on $\CG$  define $\SM^{\delta x}\subset\bigoplus_{E\in\CE^{\delta x}}\SM^E$ as the image of  the composition
$$
\Gamma(\{ >x\},\SM)\subset\bigoplus_{y>x}\SM^y \stackrel{p}\to\bigoplus_{y\in\CV^{\delta x}} \SM^y\stackrel{\rho}\to\bigoplus_{E\in\CE^{\delta x}}\SM^E,
$$
where $p$ is the projection along the decomposition and
$\rho=\bigoplus_{E\colon x\to y}\rho_{y,E}$. 
Moreover, we  define  
$$
\rho_{x,\delta x}:=(\rho_{x,E})_{E\in\CE^{\delta x}}^T\colon \SM^x\to\bigoplus_{E\in\CE^{\delta x}}\SM^E.
$$

The following statement follows directly from the definitions.
\begin{lemma} For each sheaf $\SM$ on $\CG$ the canonical restriction
  map $\Gamma(\{\ge x\},\SM)\to\Gamma(\{>x\},\SM)$ is surjective if
  and only if the set $\SM^{\delta x}$ is contained in the image of
  the map $\rho_{x,\delta x}$. 
\end{lemma}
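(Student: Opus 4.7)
The plan is to unwind the definitions and observe that extending a section from $\{>x\}$ to $\{\ge x\}$ is literally the same as lifting its image in $\bigoplus_{E\in\CE^{\delta x}}\SM^E$ through $\rho_{x,\delta x}$. First I would observe that given $(m_y)_{y>x}\in\Gamma(\{>x\},\SM)$, an element $m_x\in\SM^x$ extends the tuple to a section over $\{\ge x\}$ if and only if the compatibility relation $\rho_{x,E}(m_x)=\rho_{y,E}(m_y)$ holds for every edge $E\colon x\linie y$ with $y\in\{\ge x\}$. Edges of this form necessarily satisfy $y>x$ (since $x<y$ is forced by $y\neq x$ and comparability), so the relevant edges are precisely those in $\CE^{\delta x}$, and the compatibilities among the $m_y$ themselves already hold because $(m_y)_{y>x}$ is a section over $\{>x\}$.

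Next I would rewrite this condition in a single equation. The left hand side collected over $E\in\CE^{\delta x}$ is by definition $\rho_{x,\delta x}(m_x)$, and the right hand side is precisely the image of $(m_y)_{y>x}$ under the composition
\[
\Gamma(\{>x\},\SM)\subset\bigoplus_{y>x}\SM^y\stackrel{p}\to\bigoplus_{y\in\CV^{\delta x}}\SM^y\stackrel{\rho}\to\bigoplus_{E\in\CE^{\delta x}}\SM^E,
\]
whose image is $\SM^{\delta x}$. Hence an extension exists if and only if this image lies in $\im\rho_{x,\delta x}$.

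From here both implications are immediate. For the ``only if'' direction, every element of $\SM^{\delta x}$ is by definition of the form above for some $(m_y)_{y>x}\in\Gamma(\{>x\},\SM)$; surjectivity of the restriction map provides an extension $m_x$, whose image under $\rho_{x,\delta x}$ is the given element of $\SM^{\delta x}$, so $\SM^{\delta x}\subset\im\rho_{x,\delta x}$. For the ``if'' direction, any $(m_y)_{y>x}\in\Gamma(\{>x\},\SM)$ produces an element of $\SM^{\delta x}\subset\im\rho_{x,\delta x}$, hence admits a preimage $m_x\in\SM^x$ under $\rho_{x,\delta x}$, yielding the desired extension.

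Since the statement is essentially a definitional reformulation, there is no real obstacle; the only point that requires a moment of care is verifying that one need only check the compatibility relations along edges in $\CE^{\delta x}$ rather than all of $\CE^x$, which follows because $\{\ge x\}$ contains no $y<x$ and so edges in $\CE^x\setminus\CE^{\delta x}$ impose no constraint inside $\Gamma(\{\ge x\},\SM)$.
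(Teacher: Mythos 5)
Your proof is correct and takes exactly the approach the paper intends: the paper offers no written argument, remarking only that the lemma ``follows directly from the definitions,'' and your careful unwinding of what it means to extend a section from $\{>x\}$ to $\{\ge x\}$, together with the observation that the only new compatibility constraints arise along edges in $\CE^{\delta x}$, is precisely that definitional chase.
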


Let us now suppose that for any $w\in\CV$ the set $\{\le w\}$ is finite.
The following theorem introduces the {\em Braden--MacPherson sheaves}
on $\CG$.
\begin{theorem} \label{theorem-BMPsheaf}  For each $w\in \CV$ there is an up to isomorphism
  unique $K$-sheaf $\SB(w)$ on $\CG$ with the following properties:
\begin{enumerate}
\item The support of $\SB(w)$ is contained in $\{x\in\CV\mid x\le
  w\}$, and $\SB(w)^w\cong S$.
\item If $E\colon x\stackrel{\alpha}\to y$ is a directed edge, then the map $\rho_{y,E}\colon \SB(w)^y\to\SB(w)^E$ is surjective with kernel $\alpha\SB(w)^y$.
\item For any $x\in\CV$, the image of $\rho_{x,\delta x}$ is
  $\SB(w)^{\delta x}$, and  $\rho_{x,\delta x}\colon
  \SB(w)^x\to\SB(w)^{\delta x}$ is a projective cover in the category
  of graded $S$-modules. 
\end{enumerate}
\end{theorem}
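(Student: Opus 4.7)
I would construct $\SB(w)$ by downward induction along the partial order on $\{x\in\CV\mid x\le w\}$, starting with $\SB(w)^w:=S$ and $\SB(w)^y:=0$ for $y\not\le w$, and invoking at each new vertex the existence of a projective cover in the category of finitely generated $\DZ$-graded $S$-modules. Uniqueness will follow from the uniqueness of projective covers in this category, once the inductive hypothesis is set up to match.

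For the inductive step, fix $x<w$ and suppose that all stalks $\SB(w)^y$ for $y>x$, together with all edge modules and connecting maps whose source has already been treated, have been constructed in accordance with (1)--(3). For every edge $E\colon x\to y$ in $\CE^{\delta x}$, property (2) forces the definition $\SB(w)^E:=\SB(w)^y/\alpha(E)\SB(w)^y$ with $\rho_{y,E}$ the canonical quotient, so all edge-stalks above $x$ are pinned down. With this data one forms $\Gamma(\{>x\},\SB(w))$ and, as in the lemma preceding the theorem, the submodule $\SB(w)^{\delta x}\subset\bigoplus_{E\in\CE^{\delta x}}\SB(w)^E$. I would then pick a projective cover
$$
\rho_{x,\delta x}\colon \SB(w)^x\to\SB(w)^{\delta x}
$$
in the category of finitely generated graded $S$-modules; its components are the required maps $\rho_{x,E}$ for $E\in\CE^{\delta x}$, completing the data at $x$. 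The remaining maps $\rho_{x,E'}$, for edges $E'\colon x\to z$ with $z<x$, will be forced by (2) at the later induction step for $z$ to be the canonical quotients $\SB(w)^x\to\SB(w)^x/\alpha(E')\SB(w)^x$. By construction (1)--(3) continue to hold on $\{\ge x\}$, so the induction proceeds.

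Uniqueness is proved by the same induction: if $\SB(w)'$ is another sheaf satisfying (1)--(3), then by the inductive hypothesis the data on $\{>x\}$ is canonically isomorphic to ours, the edge modules $\SB(w)'{}^E$ and the submodule $\SB(w)'{}^{\delta x}$ are identified with $\SB(w)^E$ and $\SB(w)^{\delta x}$, and the map $\rho'_{x,\delta x}$ must again be a projective cover of the same module; any two projective covers are isomorphic, and this isomorphism extends the already constructed comparison to a sheaf isomorphism at $x$.

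The main obstacle, and really the only substantive technical point, is ensuring that projective covers exist at each stage in the chosen category. Here one uses (a) that $S$ is graded-local with graded residue field $K$, so that the graded Nakayama lemma applies and projective covers of finitely generated graded $S$-modules exist as graded free modules with explicit minimal generators, and (b) that $\SB(w)^{\delta x}$ is finitely generated; the latter is maintained by the induction itself, since $\SB(w)^{\delta x}$ is a submodule of the finite direct sum $\bigoplus_{E\in\CE^{\delta x}}\SB(w)^E$ (the set $\{y>x\mid y\le w\}$ being finite by hypothesis), each summand is a quotient of the already constructed finitely generated $\SB(w)^y$, and $S$ is Noetherian. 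With these ingredients in place, the construction and uniqueness become routine graded-local homological algebra.
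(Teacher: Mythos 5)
Your proof is correct and takes essentially the same approach as the paper, which remarks that ``there is nothing to prove'' because the theorem itself describes the downward induction that constructs $\SB(w)$ and fixes its isomorphism class, deferring details to \cite{BMP,FieAdv}. You have simply made explicit what that remark leaves implicit: the forced definition of edge modules via (2), the formation of $\SB(w)^{\delta x}$, the existence and uniqueness of graded projective covers over the graded-local Noetherian ring $S$, and the finiteness bookkeeping coming from $\{\le w\}$ being finite.
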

By a {\em projective cover} of a graded $S$-module $M$ we mean a
graded free $S$-module $P$ together with a surjective homomorphism  $f\colon
P\to M$ of graded $S$-modules such that for any homomorphism $g\colon
Q\to P$ of graded $S$-modules the following holds: If $f\circ g$ is
surjective, then $g$ is surjective.

There is, in fact, nothing to prove for the above  theorem, as it describes
an inductive procedure for the construction of $\SB(w)$ which
makes its isomorphism class unique, cf.~\cite{BMP,FieAdv}. 

Next we come to the first appearance of the GKM-property. Recall that
for $x,w\in\CV$ with $x\le w$ we have natural inclusions $\SB(w)_x\subset\SB(w)_{[x]}\subset\SB(w)^x$.

\begin{lemma} \label{lemma-costalk} Choose  $x,w\in\CV$ with $x\le
  w$.  Let
  $\alpha_1$,\dots,$\alpha_n$ be the labels of all edges of 
  $\CG_{\le w}$ that end at $x$, and let $\beta_1$,\dots,$\beta_m$ be
  the labels that start at $x$. Then the following holds.
\begin{enumerate}
\item We have
  $\alpha_1\cdots\alpha_n\beta_1\cdots\beta_m\SB(w)^x\subset\SB(w)_x$.
\item Suppose that $(K,w)$ is a GKM-pair. Then 
$
\SB(w)_{x}=\alpha_1\cdots\alpha_n\SB(w)_{[x]}$.
\end{enumerate}
\end{lemma}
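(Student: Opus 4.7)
For part (1), I would use that each $\rho_{x,E}$ is $S$-linear and that $\alpha(E)\cdot\SB(w)^E=0$. Every edge $E\in\CE^x$ has label among the $\alpha_i$ or $\beta_j$, so the product $P:=\alpha_1\cdots\alpha_n\beta_1\cdots\beta_m$ annihilates each $\SB(w)^E$. Therefore $\rho_{x,E}(Pm)=P\cdot\rho_{x,E}(m)=0$ for every $m\in\SB(w)^x$ and every $E\in\CE^x$, so $Pm\in\SB(w)_x$.

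For part (2) my plan is to combine Theorem~\ref{theorem-BMPsheaf}(2), applied to the incoming edges at $x$, with a coprimality argument on the (graded free) stalk $\SB(w)^x$. Theorem~\ref{theorem-BMPsheaf}(2) applied to each incoming edge $E\colon y\to x$ with label $\alpha_i$ identifies $\ker\rho_{x,E}$ with $\alpha_i\SB(w)^x$; intersecting with the outgoing conditions that define $\SB(w)_{[x]}$ then yields
\[
\SB(w)_x = \SB(w)_{[x]}\cap\bigcap_{i=1}^{n}\alpha_i\SB(w)^x.
\]
By Theorem~\ref{theorem-BMPsheaf}(3) the stalk $\SB(w)^x$ is graded free over $S$ (being the source of a projective cover in the category of graded $S$-modules), and by the GKM-hypothesis the linear forms $\alpha_1,\dots,\alpha_n$ are pairwise non-proportional and therefore pairwise coprime in the polynomial ring $S$. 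A coordinate-wise UFD argument then gives $\bigcap_i\alpha_i\SB(w)^x=\alpha_1\cdots\alpha_n\SB(w)^x$, so
\[
\SB(w)_x=\SB(w)_{[x]}\cap\alpha_1\cdots\alpha_n\SB(w)^x.
\]

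The main obstacle is the remaining identity $\SB(w)_{[x]}\cap\alpha_1\cdots\alpha_n\SB(w)^x=\alpha_1\cdots\alpha_n\SB(w)_{[x]}$. The inclusion $\supset$ is automatic; for $\subset$ I would pick $m=\alpha_1\cdots\alpha_n\cdot n\in\SB(w)_{[x]}$ and show that $n$ itself lies in $\SB(w)_{[x]}$. Fixing an outgoing edge $E\colon x\to y$ with label $\beta$, Theorem~\ref{theorem-BMPsheaf}(2) identifies $\SB(w)^E$ with $\SB(w)^y/\beta\SB(w)^y$, which is graded free over the domain $S/(\beta)$. The GKM-hypothesis ensures each $\alpha_i$ is non-proportional to $\beta$, hence nonzero in $S/(\beta)$, so $\alpha_1\cdots\alpha_n$ is a non-zero-divisor on $\SB(w)^E$; then $\alpha_1\cdots\alpha_n\cdot\rho_{x,E}(n)=\rho_{x,E}(m)=0$ forces $\rho_{x,E}(n)=0$. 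The difficulty here is conceptual rather than computational: the GKM-hypothesis is used twice, once to make the $\alpha_i$ pairwise coprime in $S$ and once to keep their product a non-zero-divisor modulo each outgoing label $\beta$, and without either of these inputs the reduction breaks down.
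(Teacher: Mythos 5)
Your proof is correct and follows essentially the same route as the paper's. Part (1) is identical. For part (2), your intersection-theoretic phrasing, $\SB(w)_x = \SB(w)_{[x]}\cap\bigcap_i\alpha_i\SB(w)^x$ followed by the coprimality reduction and then the non-zero-divisor argument on $\SB(w)^E\cong\SB(w)^y/\beta\SB(w)^y$, is the same two uses of the GKM hypothesis that the paper makes (pairwise coprimality of the incoming labels to pass from divisibility by each $\alpha_i$ to divisibility by the product, and non-proportionality of the $\alpha_i$ with outgoing labels so that the product acts injectively on $\bigoplus_{E\in\CE^{\delta x}}\SB(w)^E$, allowing the $\SB(w)_{[x]}$-condition to be cancelled through the product).
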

\begin{proof} If
  $m\in\alpha_1\cdots\alpha_n\beta_1\cdots\beta_m\SB(w)^x$, then
  $\rho_{x,E}(m)=0$ for all edges $E$ that contain $x$ as a vertex,
  since $\SB(w)^E$ is annihilated by one of the $\alpha$'s or one of
  the 
  $\beta$'s. Hence (1). Now suppose that $(K,w)$ is a GKM-pair and let $m\in\alpha_1\cdots\alpha_n\cdot\SB(w)_{[x]}$. If
  $E$ is an edge that starts at $x$, then $\rho_{x,E}(m)=0$ by
  definition of $\SB(w)_{[x]}$. If $E$ ends at $x$, then $E$ is labelled by one of the
  $\alpha_i$, so $\rho_{x,E}(m)=0$. Hence
  $\alpha_1\cdots\alpha_n\cdot\SB(w)_{[x]}\subset\SB(w)_{x}$. 

Clearly, $\SB(w)_{x}\subset\SB(w)_{[x]}$. If $m\in \SB(w)_x$, then
$\rho_{x,E}(m)=0$ for each edge $E$ ending at $x$. Since for such $E$
we can find an isomorphism $\SB(w)^E\cong\SB(w)^x/\alpha(E)\SB(w)^x$
in such a way that $\rho_{x,E}\colon\SB(w)^x\to\SB(w)^E$ identifies
with the canonical quotient map, $m$ must be divisible by $\alpha(E)$
in $\SB(w)^x$. Now $\alpha(E)$ acts injectively on
$\SB(w)^{\delta x}$ (since, by the GKM-property, it acts injectively
on $\bigoplus_{E\in\CE^{\delta x}}\SB^E$) and we deduce $\alpha(E)^{-1}m \in \SB(w)_{[x]}$. Hence (again by the GKM-property) $\alpha_1^{-1}\cdots\alpha_n^{-1}m \in \SB(w)_{[x]}$, and we deduce $\SB(w)_x\subset\alpha_1\cdots\alpha_n\cdot\SB(w)_{[x]}$.
\end{proof}

We now return to the case of affine moment graphs.

\subsection{The affine Hecke algebra}
Let
$\hbH=\bigoplus_{x\in\hCW} \DZ[v,v^{-1}] T_x$ be the Hecke algebra
associated to the Coxeter system $(\hCW,\hCS)$. Its multiplication is determined by the formulas
\begin{eqnarray*}
{ T}_x\cdot { T}_{y} & = & { T}_{xy}\quad\text{if $l(xy)=l(x)+l(y)$}, \\
{ T}_s^2 & = & v^{-2}T_e+(v^{-2}-1)T_s \quad\text{for $s\in\hCS$}.
\end{eqnarray*}
Then ${ T}_e$ is a unit in $\hbH$ and for any $x\in\hCW$ there exists an inverse of ${ T}_x$ in $\hbH$. For $s\in\hCS$ we have ${ T}_s^{-1}=v^2T_s+(v^2-1)$. There is a duality (i.e.~a $\DZ$-linear anti-involution) $d\colon\hbH\to\hbH$, given by $d(v)=v^{-1}$ and $d(T_x)  =  T_{x^{-1}}^{-1}$ for $x\in\hCW$. 

Set $H_x:=v^{l(x)}T_x$. Recall the following result:

\begin{theorem}[\cite{MR560412,MR1445511}]\label{self-dual elts} 
For any $w\in\hCW$ there exists a unique element
$\ul{H}_w=\sum_{x\in\hCW} h_{x,w}(v) H_x\in\hbH$ with the following properties:
\begin{enumerate}
\item \label{prop of H: duality} $\ul{H}_w$ is self-dual, i.e.~$d(\ul{H}_w)=\ul{H}_w$. 
\item \label{prop of H: support} $h_{x,w}(v)=0$ if $x\not\leq w$, and $h_{w,w}(v)=1$,
\item \label{prop of H: norm} $h_{x,w}(v)\in v\DZ[v]$ for $x<w$.
\end{enumerate}
\end{theorem}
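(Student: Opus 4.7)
I intend to prove this, which is the classical Kazhdan--Lusztig theorem on the self-dual basis of $\hbH$, by induction on $l(w)$, establishing existence and uniqueness simultaneously. For the base case $w=e$, one sets $\ul{H}_e:=H_e$ and the three properties are immediate from $d(T_e)=T_e$.

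For the inductive step, pick a simple reflection $s\in\hCS$ with $ws<w$ and let $y:=ws$, so that $l(y)=l(w)-1$ and $ys=w$. A direct check using $d(T_s)=T_s^{-1}=v^2T_s+(v^2-1)$ yields $d(H_s)=H_s+(v-v^{-1})$, hence $\ul{H}_s:=H_s+vH_e$ is self-dual; and since $d$ respects products on $\hbH$ (as one verifies from $d(T_x)=T_{x^{-1}}^{-1}$ together with $l(xy)=l(x)+l(y)\Rightarrow T_xT_y=T_{xy}$), the product $C:=\ul{H}_y\cdot\ul{H}_s$ is self-dual. I would then expand $C$ in the $H$-basis using the Hecke relations
\[
H_x H_s=\begin{cases} H_{xs}, & xs>x,\\ H_{xs}+(v^{-1}-v)H_x, & xs<x,\end{cases}
\]
to show that $C=H_w+\sum_{z<w}m_zH_z$ with every $m_z\in\DZ+v\DZ[v]$; the crucial point is that the $v^{-1}$ appearing in the second case is always multiplied by some $h_{x,y}\in v\DZ[v]$, so at most a constant term survives in each $m_z$. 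Let $n_z\in\DZ$ be that constant term, and set $\ul{H}_w:=C-\sum_{z<w}n_z\ul{H}_z$. Self-duality is clear, and because $\ul{H}_z$ has $H_z$-coefficient $1$, $H_{z'}$-coefficient zero for $z'\not\le z$, and all remaining coefficients in $v\DZ[v]$ by inductive hypothesis, this subtraction brings the $H_z$-coefficient into $v\DZ[v]$ without disturbing the coefficients at larger Bruhat elements; hence $\ul{H}_w$ satisfies properties (\ref{prop of H: duality})--(\ref{prop of H: norm}).

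For uniqueness, let $D:=\ul{H}_w-\ul{H}_w'=\sum_{z<w}d_zH_z$, with $D$ self-dual and each $d_z\in v\DZ[v]$. Factoring $v^{-l(x)}T_{x^{-1}}^{-1}=\prod_i\bigl(H_{s_i}+(v-v^{-1})\bigr)$ for a reduced expression $x=s_1\cdots s_n$ shows that $d(H_x)=H_x+\sum_{y<x}r_{y,x}H_y$ for certain $r_{y,x}\in\DZ[v,v^{-1}]$. If $D\neq 0$, choose $z$ maximal in its support; then in $d(D)$ the coefficient of $H_z$ receives contributions only from $d(d_x)d(H_x)$ with $x\ge z$ in the support, i.e.~only from $x=z$, giving $d(d_z)$. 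Self-duality therefore forces $d(d_z)=d_z$, and an element of $v\DZ[v]$ fixed by $v\mapsto v^{-1}$ must vanish, contradicting $d_z\neq 0$.

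The main obstacle is the structural claim $m_z\in\DZ+v\DZ[v]$ in the cleanup step; making this precise requires a careful case analysis with respect to whether $zs$ exceeds or falls below $z$ in the Bruhat order, together with the inductive hypothesis $h_{x,y}\in v\DZ[v]$ for $x<y$. Once this is in hand, the descending-induction correction terminates simply because $\{z<w\}$ is finite and, by the support property, each subtraction $n_z\ul{H}_z$ affects only the $H_{z''}$-coefficients with $z''\le z$; all remaining steps are routine bookkeeping in $\hbH$.
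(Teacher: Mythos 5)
The paper does not prove this statement---it is cited directly from Kazhdan--Lusztig \cite{MR560412} and Soergel \cite{MR1445511}---so there is no in-paper argument to compare against. Your proof is the standard inductive construction in Soergel's $H$-normalization and it is correct: the expansion of $\ul{H}_y\ul{H}_s$ does produce coefficients in $\DZ+v\DZ[v]$ because the single $v^{-1}$ from $H_xH_s=H_{xs}+(v^{-1}-v)H_x$ always meets an $h_{x,y}\in v\DZ[v]$ (and the potential constant $h_{zs,y}=1$ can only occur when $zs=y$, i.e.~$z=w$); the one-shot correction $\ul{H}_w:=C-\sum_{z<w}n_z\ul{H}_z$ works without any descending recursion precisely because each $\ul{H}_z$ has all lower $H$-coefficients in $v\DZ[v]$, so subtracting it cannot create new constant terms at levels below $z$; and the uniqueness argument via a maximal element in the support and $v\DZ[v]\cap v^{-1}\DZ[v^{-1}]=0$ is exactly right. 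One small remark: you invoke that $d$ is multiplicative, and it is (it is the bar involution, a ring automorphism), even though the paper loosely calls it an ``anti-involution''; your parenthetical justification via $d(T_{xy})=d(T_x)d(T_y)$ when lengths add is the right check.
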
 
For example, we have $\ul{H}_e=H_e$ and $\ul{H}_s=H_s+v H_e$
for each $s\in\hCS$. The polynomials $h_{x,w}$ are called the {\em
  affine Kazhdan--Lusztig polynomials}.

\subsection{The main conjecture}\label{sec-mainconj}

Now let $\hCG$ be the  moment graph that we associated to the
affine root system $\hR$ in Definition  \ref{def-affmom}. Let $w\in\CV=\hCW$.
By the defining properties of $\SB(w)$ listed in Theorem
\ref{theorem-BMPsheaf},  the
$S$-module $\SB(w)^x$ is graded free for all $x\le w$. For any graded
free $S$-module $M$ we set
$$
\rk^\prime\,M:=v^{l_1}+\dots+v^{l_n},
$$
where $l_1,\dots,l_n\in\DZ$ are such that $M\cong\bigoplus_{i=1}^n
S\{l_i\}$.

\begin{definition} The {\em graded character of $\SB(w)$} is 
$$
h(\SB(w)):=\sum_{x\le w}\rk^\prime\,\SB(w)^x v^{l(w)}T_x\in\hbH.
$$
\end{definition}

Here is the main conjecture.

\begin{conjecture}\label{conj-KL} If $(K,w)$ is a GKM-pair,  then
$$
h(\SB(w))=\ul{H}_w.
$$ 
\end{conjecture}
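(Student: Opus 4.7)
The plan is to deduce the equality $h(\SB(w))=\ul{H}_w$ from a Hard Lefschetz property for the Braden--MacPherson sheaves, to establish that property in characteristic zero by passing to equivariant intersection cohomology, and then to descend to positive characteristic via an explicit integral base-change argument.

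First I would introduce a Lefschetz theory for sheaves on $\hCG$. For a suitable ``ample'' element $\ell\in Y_K$ (say, in the interior of the dominant chamber), multiplication by powers of $\ell$ operates on the stalks $\SB(w)^x$ and on the various quotients $\SB(w)^x/\SB(w)_{[x]}$. I would formulate a Hard Lefschetz conjecture asserting that, for the correct degrees, the induced maps on these subquotients are isomorphisms, together with the corresponding primitive decomposition. The Hard Lefschetz property, combined with the inductive construction of $\SB(w)$ from Theorem \ref{theorem-BMPsheaf} (where $\SB(w)^x$ is a projective cover of $\SB(w)^{\delta x}$ and thus determined by the structure of the already-constructed $\SB(w)^y$ for $y>x$), forces the graded ranks $\rk^\prime\,\SB(w)^x$ to be palindromic polynomials satisfying the same recursion as the Kazhdan--Lusztig polynomials $h_{x,w}(v)$. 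The uniqueness part of Theorem \ref{self-dual elts} then forces $\rk^\prime\,\SB(w)^x=h_{x,w}(v)$, which is exactly Conjecture \ref{conj-KL}. This is the moment-graph analogue of Soergel's argument for special bimodules in \cite{SoeGAFA}.

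Next I would establish the Hard Lefschetz property for $\SB(w)$ in characteristic zero by identifying $\SB(w)_\DQ$ with the sheaf-theoretic incarnation of the equivariant intersection cohomology of the affine Schubert variety $\ol{X_w}$. The classical Hard Lefschetz theorem, applied via a Bott--Samelson resolution of $\ol{X_w}$ and transported through the equivariant formality that links intersection cohomology to sheaves on $\hCG$, supplies the required isomorphisms. This part is essentially an application of well-developed geometric machinery.

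The hard step, and the source of the very large bound $U(\hw_0)$, is the descent from $\DQ$ to $\DF_p$. Here I would construct $\SB(w)$ integrally as a direct summand of a Bott--Samelson module $B(\bs)$ attached to a reduced expression $\bs$ of $\hw_0$, the latter being defined over $\DZ$ in a natural way. The Hard Lefschetz isomorphism from the rational case lifts to a map of $\DZ$-forms of the relevant subquotients of $B(\bs)$; the question is when its reduction modulo $p$ remains an isomorphism. Expressing this lift as a matrix in an integral basis of $B(\bs)$, it remains invertible over $\DF_p$ exactly when $p$ does not divide its determinant. The invariants $r(\bs)$, $d(\bs)$ and $N(\bs)$ control respectively the size of the matrix, the cohomological degree shifts involved, and the magnitude of the matrix entries (read off from the structure constants of the affine Hecke algebra acting on $B(\bs)$). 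A direct Cramer/Hadamard-type estimate then bounds the determinant by $r!(r!(r-1)!N^{l(\hw_0)+2d})^r$, and minimizing over reduced expressions $\bs$ produces $U(\hw_0)$. The main obstacle is precisely that each stage of the inductive construction of $\SB(w)$ compounds the denominators one has to invert; any substantial improvement on the bound would require a finer arithmetic understanding of the integral structure on $B(\bs)$ than the crude linear algebra used in this argument.
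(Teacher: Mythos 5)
Your outline matches the paper's strategy point for point: reduce the character formula to a Hard Lefschetz condition on the inclusions $\CB(w)_x\to\CB(w)^x$; verify that condition over $\DQ$ by identifying $\CB_\DQ(w)$ with Soergel's equivariant intersection cohomology of a Kac--Moody Schubert variety; then descend to positive characteristic via an integral Bott--Samelson model $\CX_\DZ(\bs)$ and a crude estimate on the minors of the matrix $X$ describing the costalk-to-stalk map. So the route you sketch is exactly the one taken.

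There is, however, a genuine gap: this chain of reasoning does not prove Conjecture~\ref{conj-KL} as stated. The conjecture asserts the formula for every GKM pair $(K,w)$, which in the affine case includes all characteristics at least roughly the Coxeter number. What the Lefschetz-plus-base-change argument actually gives is the conclusion in characteristic zero and for $\ch K>U(w)$ (Theorem~\ref{theorem-bound}); for the enormous range of primes between the Coxeter number and $U(w)$ the conjecture remains open, and the paper is explicit that this is exactly what it cannot do. So the proposal, read as a proof of the statement, omits precisely the interval that the conjecture is really about, and you should flag that you are only recovering Theorem~\ref{theorem-bound}. Two technical points are also off: the Lefschetz operator is not multiplication by an ample $\ell\in Y_K$ on an $S$-module, but multiplication by $t$ after the specialization $\tau\colon S\to K[t]$ sending every simple affine root to $t$ (the elementary-divisor formalism underlying ``Lefschetz data'' only makes sense over the one-variable ring); and the implication ``HL $\Rightarrow$ KL'' does not go through a KL-type recursion or palindromicity of $h_{x,w}$ but through Proposition~\ref{prop-sme}, which converts the conjecture into the degree bound that $\CB(w)^x$ is generated in degrees $<l(w)-l(x)$ --- a bound that the Lefschetz condition together with Proposition~\ref{prop-lef} delivers directly.
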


In the following section we shortly present the main application of
the above conjecture. 
 
\subsection{Lusztig's conjecture}\label{sec-Lconj}

Suppose that $k$ is a field of positive non-even characteristic. Let $\ol k$
be an algebraic closure of $k$ and let $G^\vee_{\ol k}$ be the  connected,
simply connected simple algebraic group over $\ol k$ whose root system is dual to $R$.
 Let $\widehat w_0\in \widehat \CW$ be the element that corresponds to
 the lowest alcove in the anti-fundamental box (cf.~\cite{FieModRep}).  If $\ch k$ is at least the Coxeter number of our root system $R$, then the pair $(k,w)$ is a GKM-pair for each $w\varle \widehat w_0$. 

The main result in \cite{FieModRep} and \cite{FieCharIrr} is that
Conjecture \ref{conj-KL} implies Lusztig's conjecture on the
characters of irreducible rational representations of $G_{\ol k}^\vee$. More precisely:

\begin{theorem} Suppose that $\ch k$ is at least the Coxeter number of
  $R$. If Conjecture $\ref{conj-KL}$ holds for all $w\varle \widehat
  w_0$, then Lusztig's conjecture  (cf.~\cite{Lus}) on the characters of irreducible rational representations of $G_{\ol k}^\vee$ holds.
\end{theorem}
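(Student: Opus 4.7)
The plan is to chain together the bridge, established by the author in \cite{FieModRep} and \cite{FieCharIrr}, between sheaves on the affine moment graph $\hCG$ and modular representations of $\fg^\vee_{\ol k}=\Lie G^\vee_{\ol k}$. First I would pass from each $\SB(w)$ to its space of global sections $\Gamma(\SB(w))$, which is naturally a graded module over the structure algebra $\CZ$ of $\hCG$. The graded ranks of the stalks $\SB(w)^x$, and hence the polynomial $h(\SB(w))$, are recovered by localizing $\Gamma(\SB(w))$ at the generic point of each vertex and reading off the vertex-decomposition. Thus the assumed identity $h(\SB(w))=\ul{H}_w$ translates into a clean combinatorial multiplicity statement inside a well-defined subcategory of graded $\CZ$-modules.

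Next I would invoke the main theorem of \cite{FieModRep, FieCharIrr}: under the hypothesis $\ch k\varge$ Coxeter number, there is a suitably graded and deformed equivalence between the principal block of the restricted $\fg^\vee_{\ol k}$-category and a full subcategory of graded $\CZ$-modules, which sends the indecomposable projective cover of the simple highest weight module $L(w\cdot 0)$ to $\Gamma(\SB(w))$ for every $w\varle \hw_0$. Under this equivalence the graded character $h(\SB(w))$ is precisely the generating polynomial of the baby-Verma filtration multiplicities $(Q(w\cdot 0):Z(x\cdot 0))$ of the corresponding projective cover, with $v$ tracking the grading shifts.

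Finally I would combine BGG reciprocity in the restricted block, $(Q(\mu):Z(\lambda))=[Z(\lambda):L(\mu)]$, with the specialization $v\mapsto 1$. The assumed equality $h(\SB(w))=\ul{H}_w$ then reads exactly as $[Z(x\cdot 0):L(w\cdot 0)]=h_{x,w}(1)$ for all $x\varle w\varle \hw_0$, which is Lusztig's formula for the restricted regular weights parametrised by $\{\varle \hw_0\}$. Steinberg's tensor product theorem together with the Jantzen translation principle then propagate this identity to the characters of all simple rational $G^\vee_{\ol k}$-modules, producing the full content of Lusztig's conjecture.

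The principal obstacle, and the reason this theorem rests so heavily on prior work, is the second step: constructing the equivalence between the combinatorial $\CZ$-module category and the modular representation block is a long, delicate argument that passes through Andersen--Jantzen--Soergel's treatment over a generic deformation ring and then specializes to positive characteristic. It is precisely the bound $\ch k\varge$ Coxeter number that guarantees both the GKM-property on $\{\varle \hw_0\}$ (so that Lemma \ref{lemma-costalk} governs all the relevant costalks) and the cleanness of the deformation-specialization step, and without either ingredient the equivalence, and hence the reduction to Lusztig's conjecture, would break down.
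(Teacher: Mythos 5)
Your proposal is essentially the same approach the paper takes, with the caveat that the paper itself supplies no proof at all: it simply asserts that the theorem is the main result of \cite{FieModRep} and \cite{FieCharIrr} and moves on. What you have written is a faithful executive summary of the chain of implications established in those two references — passing from $\SB(w)$ to $\Gamma(\SB(w))$ as a graded $\CZ$-module (exactly the $\CB(w)$ used later in the paper), invoking the combinatorial equivalence that identifies the $\CB(w)$ with indecomposable projective objects in a suitable (deformed) restricted category, translating $h(\SB(w))=\ul{H}_w$ into baby-Verma multiplicities via BGG reciprocity, and finally bootstrapping from restricted regular weights to the full Lusztig conjecture via Steinberg's tensor product theorem and Jantzen's translation principle.

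Two small cautions about precision, since you are summarizing a long argument in a few sentences. First, the correspondence in \cite{FieModRep} is set up at the level of a deformed category over a local deformation ring, not directly as a graded equivalence between the restricted block and a subcategory of $\CZ$-modules; the grading and the specialization both require care, and the Coxeter number bound enters both through the GKM property (as you note) and through the Andersen--Jantzen--Soergel combinatorics. Second, "Lusztig's conjecture" in the form cited (\cite{Lus}) is stated for characters of simple Weyl modules via an alternating sum over the affine Weyl group; the baby-Verma reformulation you use is an equivalent standard form (due to AJS), but the translation between them is itself a nontrivial step that you are implicitly absorbing into "the full content of Lusztig's conjecture." Neither point is a gap in your reasoning, only places where the actual references do work that your sketch compresses.
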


\section{$\CZ$-modules} 
From now on we fix the root system $R$, a system $R^+\subset R$ of positive
roots  and the field $K$. We denote by
$\hCG$ the moment graph associated to these data.
In the following  we want to translate Conjecture \ref{conj-KL} into a
similar character conjecture for certain modules over a commutative
and associative algebra. 

\subsection{The structure algebra}\label{subsec-strucalg}

The {\em structure algebra} of $\hCG$ over $K$ is, by definition, the algebra
$$
\CZ := \left\{ (z_x)\in \prod_{x\in \hCW}S\left| \, 
\begin{matrix} 
 z_x\equiv z_{s_{\alpha,n}x} \mod \alpha+n\delta \\
 \text{ for all $x\in \hCW$, $\alpha\in R$, $n\in\DZ$}
\end{matrix}
\right.\right\}.
$$
For a  subset $\CI$ of $\hCW$ we define the {\em local structure algebra} 
$$
\CZ(\CI):=\left\{ (z_x)\in \prod_{x\in \CI}S\left| \, 
\begin{matrix} 
 z_x\equiv z_{s_{\alpha,n}x} \mod \alpha+n\delta \\
 \text{ for all $x\in \CI$, $\alpha\in R$, $n\in\DZ$  with $s_{\alpha,n}x\in\CI$}
\end{matrix}
\right.\right\}.
$$

The algebras $\CZ$ and $\CZ(\CI)$ are the global and local sections of the
  {\em structure sheaf} $\SZ$ that is defined by $\SZ^x=S$,
  $\SZ^E=S/\alpha(E)S$, and $\rho_{x,E}\colon S\to S/\alpha(E)S$ the
  canonical map.
There are obvious restriction maps  $\CZ(\CI)\to\CZ(\CI^\prime)$
whenever $\CI^\prime\subset\CI$ are subsets of $\hCW$. Both $\CZ=\CZ(\hCW)$ and
$\CZ(\CI)$ are commutative, associative, unital $S$-algebras with
coordinatewise addition and multiplication. More generally, coordinatewise multiplication makes $\Gamma(\CI,\SM)$ into a
$\CZ(\CI)$-module (hence a  $\CZ$-module) for each sheaf $\SM$ on $\hCG$.

If $\CI$ is infinite, then $\CZ(\CI)$ and $\prod_{x\in\CI} S$ are {\em not} 
graded algebras in any canonical sense. However, we can define the
graded component of $\CZ(\CI)$ of degree $n$ as
the intersection $\CZ(\CI)\cap \prod_{x\in\CI} S_{\{n\}}$. It then makes sense to talk about $\DZ$-graded $\CZ(\CI)$-modules, i.e.~ modules that carry a $\DZ$-grading such that each {\em homogeneous} element of $\CZ(\CI)$ acts homogeneously.

\begin{definition} For $w\in\hCW$ we denote by $\CB(w):=\Gamma(\SB(w))$ the $\CZ$-module of global sections of $\SB(w)$.
\end{definition}
Since $\SB(w)$ is supported on $\{\le w\}$, the action of $\CZ$ on
$\CB(w)$ factors over an action of  $\CZ(\{\le w\})$ on $\CB(w)$. 

Our next objective is to translate Conjecture \ref{conj-KL} into an
analogous multiplicity conjecture for the special modules $\CB(w)$. In
order to do this we have to recover the stalks $\SB(w)^x$ in terms of $\CB(w)$. 

\subsection{The generic decomposition}\label{subsec-gendecomp}

Fix  a field $K$ and a {\em finite} subset $\CI$ of $\hCW$ such that $(K,\CI)$ is a GKM-pair. In particular,
we have $\alpha(E)\ne 0$ in $S$ for all edges $E$ of $\hCG_{\CI}$. Let us denote
by $Q_\CI$ the ring obtained by adjoining $\alpha(E)^{-1}$ for
all those $E$ to $S$
inside the quotient field $\Quot(S)$. For an $S$-module $M$ we write
$M_{Q_\CI}:=M\otimes_S Q_\CI$ for the $Q_\CI$-module obtained by extending
scalars. In particular, we have a $Q_\CI$-algebra $\CZ(\CI)_{Q_\CI}$. The
natural inclusion $\CZ(\CI)\subset\bigoplus_{x\in\CI} S$ induces an
inclusion $\CZ(\CI)_{Q_\CI}\subset\prod_{x\in\CI} Q_\CI$.

\begin{lemma}[\cite{FieAdv}] Suppose that $(K,\CI)$ is a GKM-pair and that $\CI$ is finite. Then the following holds:
\begin{enumerate}
\item  The canonical inclusion $\CZ(\CI)_{Q_\CI}\subset\bigoplus_{x\in\CI} Q_\CI$
is a bijection.
\item If $\CM$ is a $\CZ(\CI)$-module, then there is a canonical decomposition $\CM_{Q_\CI}=\bigoplus_{x\in\CI} \CM_{Q_\CI}^x$ such that $(z_x)\in \CZ(\CI)_{Q_\CI}$ acts on $\CM_{Q_\CI}^x$ as multiplication by $z_x$.
\end{enumerate}
\end{lemma}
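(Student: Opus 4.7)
The plan is to find enough idempotents in $\CZ(\CI)_{Q_\CI}$ to fill the ambient product $\bigoplus_{x\in\CI} Q_\CI$, and then to derive part (2) formally from the resulting orthogonal idempotent decomposition. The key construction produces, for each vertex $x\in\CI$, an element of $\CZ(\CI)$ that vanishes outside $x$ and whose value at $x$ becomes a unit after passing to $Q_\CI$.

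Concretely, I would define $z^{(x)}\in\prod_{y\in\CI}S$ by $z^{(x)}_y=0$ for $y\ne x$ and
$$
z^{(x)}_x\;:=\;\prod_{E\in\CE^x\cap\CE(\CG_\CI)}\alpha(E),
$$
the product running over the finitely many edges of $\CG_\CI$ incident to $x$. To verify that $z^{(x)}\in\CZ(\CI)$, one checks each congruence $z^{(x)}_y\equiv z^{(x)}_{s_{\alpha,n}y}\pmod{\alpha+n\delta}$: if neither endpoint is $x$ both components are $0$; if exactly one endpoint is $x$, the corresponding label $\alpha+n\delta$ (up to sign) appears as a factor of $z^{(x)}_x$, so the difference lies in $(\alpha+n\delta)S$. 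Since $z^{(x)}_x$ is a product of those edge labels that were formally inverted in $Q_\CI$, it is a unit there, and the element $e_x:=z^{(x)}\otimes(z^{(x)}_x)^{-1}\in\CZ(\CI)_{Q_\CI}$ has image in $\bigoplus_{y\in\CI}Q_\CI$ equal to the characteristic function of $\{x\}$.

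Once these characteristic functions lie in $\CZ(\CI)_{Q_\CI}$, they span $\bigoplus_{x\in\CI}Q_\CI$ as a $Q_\CI$-module, which proves the bijectivity asserted in (1). For (2) I would observe that $\{e_x\}_{x\in\CI}$ is a family of pairwise orthogonal idempotents in $\CZ(\CI)_{Q_\CI}$ with $\sum_x e_x=1$, hence for any $\CZ(\CI)$-module $\CM$ the module $\CM_{Q_\CI}$ decomposes as $\bigoplus_x e_x\CM_{Q_\CI}$. Setting $\CM^x_{Q_\CI}:=e_x\CM_{Q_\CI}$, the identity $(z_y)_y=\sum_y z_y e_y$ in $\CZ(\CI)_{Q_\CI}$ shows that $(z_y)_y$ acts on $\CM^x_{Q_\CI}$ as multiplication by $z_x$, as required.

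The only place where the GKM-hypothesis genuinely enters is in guaranteeing that the labels $\alpha(E)\in Y_K$ are all non-zero, so that $Q_\CI$ is a well-defined localization inside $\Quot(S)$ and the product defining $z^{(x)}_x$ is a non-zerodivisor; finiteness of $\CI$ ensures this product is really a product over finitely many factors. I do not expect a serious obstacle beyond this bookkeeping: the essential content is the existence of the separating sections $z^{(x)}$, and once these are in hand both statements are formal consequences.
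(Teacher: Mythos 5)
The paper does not actually prove this lemma---it is cited verbatim from [FieAdv]---so there is no in-paper argument to compare against. Your construction is correct and is, as far as I am aware, essentially the argument of the cited reference: build, for each $x\in\CI$, the section $z^{(x)}\in\CZ(\CI)$ supported at $x$ with $z^{(x)}_x=\prod_{E\in\CE^x\cap\CE(\CG_{\CI})}\alpha(E)$, invert to obtain orthogonal idempotents $e_x$ summing to $1$ in $\CZ(\CI)_{Q_\CI}$, and then read off both parts of the lemma. Two small remarks. First, the idempotents only give surjectivity in (1); injectivity of $\CZ(\CI)_{Q_\CI}\to\bigoplus_{x\in\CI}Q_\CI$ uses that $Q_\CI$ is a localization of $S$, hence flat, so the inclusion $\CZ(\CI)\subset\bigoplus_{x\in\CI}S$ stays injective after base change---you use this implicitly (the paper states the map is an inclusion just before the lemma), but it is worth naming. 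Second, your closing observation is in fact sharper than you present it: the argument never uses the linear-independence half of the GKM condition, only the non-vanishing of each $\alpha(E)$ in $Y_K$. That is exactly what the paper extracts from the GKM hypothesis at this point (``In particular, we have $\alpha(E)\ne 0$ in $S$ for all edges $E$ of $\hCG_{\CI}$''), and it is the only input needed to make $Q_\CI$ a genuine localization and the $z^{(x)}_x$ non-zerodivisors. The pairwise linear independence becomes essential only later, e.g.\ in Lemma \ref{lemma-costalk}.
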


 Let $\CM$ be a $\CZ(\CI)$-module that is torsion free as an
 $S$-module. Then we have a canonical inclusion $\CM\subset\CM_{Q_\CI}$. For
 a subset  $\CJ$ of $\CI$ let us define 
$$
\CM^\CJ:=\im\left(\CM\subset \CM_{Q_\CI}=\bigoplus_{x\in\CI}\CM_{Q_\CI}^x\to  \bigoplus_{x\in \CJ}\CM_{Q_\CI}^x\right)
$$ 
and 
$$
\CM_\CJ:=\CM\cap\bigoplus_{x\in \CJ}\CM_{Q_\CI}^x.
$$

If $\CJ^\prime$ is a subset of $\CJ$, then we have a natural surjection
$\CM^\CJ\to\CM^{\CJ^\prime}$ and a natural injection
$\CM_{\CJ^\prime}\to\CM_{\CJ}$. We define the {\em stalk} of $\CM$ at $x$ as $\CM^x:=\CM^{\{x\}}$ and
the {\em costalk} of $\CM$ at $x$ as $\CM_x:=\CM_{\{x\}}$.
For $x\le w$ we set $\CM^{\ge x}=\CM^{\{\ge x\}}$ and $\CM^{>
  x}=\CM^{\{> x\}}$.  We let $\CM_{[x]}$ be the kernel of the
surjection $\CM^{\ge x}\to\CM^{>x}$. Then we have inclusions 
$$
\CM_x\subset\CM_{[x]}\subset\CM^x.
$$

The following lemma is a consequence of the definitions.

\begin{lemma}\label{lemma-stalks} The costalk $\CM_x$  is the biggest submodule of
  $\CM$ on which $(z_y)\in\CZ(\CI)$ acts as multiplication with
  $z_x$. Analogously, the stalk $\CM^x$ is the biggest quotient of
  $\CZ$ on which $(z_y)$ acts as multiplication with $z_x$.
\end{lemma}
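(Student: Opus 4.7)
My plan is to rephrase both claims as ideal-annihilation statements and then exploit the generic decomposition. First I would note that the evaluation map $\CZ(\CI) \to S$, $(z_y) \mapsto z_x$ is a ring homomorphism with kernel the ideal $\CI_x := \{(z_y) \in \CZ(\CI) : z_x = 0\}$. Since $(z_y) - (z_x, z_x, \ldots) \in \CI_x$, requiring that $(z_y)$ act on an element of $\CM$ as multiplication by $z_x$ (via the diagonal $S \hookrightarrow \CZ(\CI)$) is equivalent to requiring that $\CI_x$ annihilate the element. Thus the lemma amounts to: $\CM_x$ is the largest $\CI_x$-annihilated submodule of $\CM$, and $\CM^x$ is the largest $\CI_x$-annihilated quotient of $\CM$.

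The key device I would use is a separating family in $\CI_x$ extracted from the generic decomposition. By the previous lemma, $\CZ(\CI)_{Q_\CI} = \bigoplus_{w\in\CI} Q_\CI$ contains idempotents $\epsilon_y$ supported at a single vertex $y$; clearing denominators, for each $y$ there exists $s_y \in S \setminus \{0\}$ with $\zeta_y := s_y \epsilon_y \in \CZ(\CI)$. For $y \neq x$ this $\zeta_y$ lies in $\CI_x$, and under the decomposition $\CM_{Q_\CI} = \bigoplus_w \CM_{Q_\CI}^w$ it acts as multiplication by $s_y$ on the $y$-summand and by zero on all other summands. Note also that $S$-torsion-freeness of $\CM$ is inherited by $\CM_{Q_\CI}$ and hence by each of its direct summands $\CM_{Q_\CI}^w$.

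For the costalk claim, $\CM_x = \CM \cap \CM_{Q_\CI}^x$ is visibly $\CI_x$-annihilated by the decomposition. Conversely, if $N \subset \CM$ is an $\CI_x$-annihilated submodule and $n = \sum_w n_w \in N$, then for each $y \neq x$ the identity $\zeta_y \cdot n = s_y n_y = 0$ combined with the $S$-torsion-freeness of $\CM_{Q_\CI}^y$ forces $n_y = 0$, so $n \in \CM_{Q_\CI}^x \cap \CM = \CM_x$. The stalk claim is dual: $\CM^x = \im(\CM \to \CM_{Q_\CI}^x)$ is $\CI_x$-annihilated since the $\CZ(\CI)$-action on $\CM_{Q_\CI}^x$ factors through the $x$-evaluation, and for any other $\CI_x$-annihilated quotient $\CM \twoheadrightarrow Q$ the same $\zeta_y$-trick shows that for every $m \in \ker(\CM \to \CM^x)$ one has $s_y m_y \in \ker(\CM \to Q)$ and hence $s \cdot m \in \ker(\CM \to Q)$ for a common $s \in S\setminus\{0\}$. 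The hard part will be closing this torsion gap in the stalk direction; my plan is to invoke the $S$-torsion-freeness of $\CM^x$ itself (a submodule of the $Q_\CI$-module $\CM_{Q_\CI}^x$) so that the statement "biggest quotient" is naturally understood within torsion-free quotients, where $s\cdot m \in \ker$ upgrades to $m \in \ker$ and the factorization $\CM \to \CM^x \to Q$ follows.
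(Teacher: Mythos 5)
The paper offers no argument here --- it says Lemma~\ref{lemma-stalks} ``is a consequence of the definitions'' and moves on --- so you are supplying a proof where the author gave none, and your argument does follow the natural line: translate the hypothesis into annihilation by the ideal $\CI_x$, and separate vertices with the idempotents $\epsilon_y\in\CZ(\CI)_{Q_\CI}$ after clearing denominators. The costalk direction is sound as written. For the stalk direction you correctly spotted the one genuine subtlety: $\CM/\CI_x\CM$ is literally the largest $\CI_x$-annihilated quotient and it may have torsion, so the universal property can only be asked among torsion-free quotients; since $\CM^x\subset\CM_{Q_\CI}^x$ is automatically $S$-torsion-free and the only quotient the paper ever compares it to is the graded free module $\SB(w)^x$ (Proposition~\ref{prop-stgsec}), this is plainly the intended reading. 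One place to tighten: the jump from ``$s_y m_y\in\ker(\CM\to Q)$ for each $y\neq x$'' to ``$s\cdot m\in\ker(\CM\to Q)$ for a common $s$'' deserves a sentence. Take $s\in S\setminus\{0\}$ to be the product of all labels $\alpha(E)$ with $E$ an edge of $\CG_\CI$ meeting $x$; then $\zeta:=s(1-\epsilon_x)=\sum_{y\neq x}s\epsilon_y$ lies in $\CZ(\CI)\cap\CI_x$, and since $m_x=0$ it acts on $m$ as multiplication by $s$, giving $s\cdot m=\zeta\cdot m\in\ker(\CM\to Q)$ in one stroke. (Alternatively, with $s$ a common multiple of the $s_y$, write $sm=\sum_{y\neq x}(s/s_y)\,\zeta_y m$ and note each summand is already in the kernel.) With that filled in your proof is complete.
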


Now we can compare the stalks and costalks of $\CB(w)$ and of $\SB(w)$.
\begin{proposition} \label{prop-stgsec} The canonical map
  $\CB(w)=\Gamma(\SB(w))\to \SB(w)^x$ induces an isomorphism 
$$
\CB(w)^x\stackrel{\sim}\to \SB(w)^x.
$$ 
This isomorphism restricts to the following isomorphisms on
subspaces:
$$
\CB(w)_x \stackrel{\sim}\to \SB(w)_x,\quad
\CB(w)_{[x]} \stackrel{\sim}\to \SB(w)_{[x]}.
$$
In particular, the map $\SB(w)_x\to \SB(w)^x$ identifies with the map $\CB(w)_x\to \CB(w)^x$. 
\end{proposition}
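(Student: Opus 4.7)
The plan is to first localise everything at the labels of edges in $\hCG_{\{\le w\}}$ and to use the decomposition of $\CB(w)_{Q_\CI}$ to reduce all three identifications to concrete statements about sections of $\SB(w)$. Set $\CI:=\{\le w\}$.

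First I would observe that because $\alpha(E)\SB(w)^E=0$ and $\alpha(E)$ is invertible in $Q_\CI$, every edge stalk satisfies $\SB(w)^E_{Q_\CI}=0$; applying the flat functor $-\otimes_S Q_\CI$ to the equalizer presentation of $\Gamma(\SB(w))$ then yields
\[
\CB(w)_{Q_\CI}=\bigoplus_{y\le w}\SB(w)^y_{Q_\CI}.
\]
Since the $\CZ$-action on $\CB(w)$ is componentwise, the idempotents of $\CZ(\CI)_{Q_\CI}=\bigoplus_y Q_\CI$ are exactly the componentwise projections, and the generic decomposition from Section \ref{subsec-gendecomp} therefore gives $\CB(w)^x_{Q_\CI}=\SB(w)^x_{Q_\CI}$. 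As $\SB(w)^x$ is torsion-free over $S$, this identifies $\CB(w)^x$ with the image of the natural projection $\pi_x\colon\Gamma(\SB(w))\to\SB(w)^x$ inside $\SB(w)^x$; the first assertion reduces to surjectivity of $\pi_x$.

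For that surjectivity, given $m_x\in\SB(w)^x$ I would first extend upward. By Theorem \ref{theorem-BMPsheaf}(3) we have $\rho_{x,\delta x}(m_x)\in\SB(w)^{\delta x}$, and by the very definition of $\SB(w)^{\delta x}$ this element lifts to a section $(m_y)_y$ of $\SB(w)$ on $\{>x\}\cap\{\le w\}$ whose $\rho_{y,E}$-values along the edges $E\colon x\to y$ coincide with those coming from $m_x$. Together with $m_x$ this yields a section on the upward-closed set $\{\ge x\}\cap\{\le w\}$. I would then extend downward: enumerate the remaining vertices of $\{\le w\}\setminus\{\ge x\}$ in decreasing order along any linear extension of the partial order, so that the already-defined set stays upward-closed at every step. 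When a new vertex $z$ is processed, the tuple $(\rho_{y,E}(m_y))_{E\colon z\to y}$ lies in $\SB(w)^{\delta z}$ (since the already-defined data form a section on $\{>z\}\cap\{\le w\}$) and can be lifted to some $m_z\in\SB(w)^z$ through the surjection $\rho_{z,\delta z}$ from Theorem \ref{theorem-BMPsheaf}(3). Iteration produces a global section restricting to $m_x$.

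For $\CB(w)_x$, Lemma \ref{lemma-stalks} characterises it as the largest submodule of $\CB(w)$ on which each $(z_y)\in\CZ$ acts as multiplication by $z_x$. Extending scalars to $Q_\CI$, the condition is tested against the componentwise idempotents of $\CZ(\CI)_{Q_\CI}$ and forces the $y$-component of any such element to vanish in $\SB(w)^y_{Q_\CI}$ for every $y\ne x$; torsion-freeness then makes it vanish in $\SB(w)^y$ itself. Hence $\CB(w)_x$ consists of sections supported at $\{x\}$, and such a tuple is a section precisely when $\rho_{x,E}(m_x)=0$ for every $E\in\CE^x$, i.e.~when $m_x\in\SB(w)_x$. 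For the intermediate costalk, the surjectivity argument of the previous paragraph, applied both to $\{\ge x\}$ and to $\{>x\}$, gives $\CB(w)^{\ge x}=\Gamma(\{\ge x\}\cap\{\le w\},\SB(w))$ and $\CB(w)^{>x}=\Gamma(\{>x\}\cap\{\le w\},\SB(w))$; the kernel of the restriction then consists of sections on $\{\ge x\}\cap\{\le w\}$ vanishing above $x$, and the compatibility conditions collapse to $\rho_{x,E}(m_x)=0$ for the edges $E\colon x\to y$, i.e.~to $m_x\in\SB(w)_{[x]}$.

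The hardest step is the surjectivity of $\pi_x$: the upward lift is essentially free from the definition of $\SB(w)^{\delta x}$, but the downward extension must be organised so that the already-defined set remains upward-closed in $\{\le w\}$ at every stage. Otherwise one would also need to match compatibility conditions on edges descending into the newly added vertex, which cannot be enforced through the projective-cover map $\rho_{z,\delta z}$ alone; processing the remaining vertices in decreasing order of a linear extension of the partial order circumvents this issue.
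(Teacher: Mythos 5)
Your proof is correct and takes essentially the same route as the paper's: identify $\CB(w)^x$ with the image of the projection $\pi_x\colon\Gamma(\SB(w))\to\SB(w)^x$ via the generic decomposition over $Q_\CI$, prove surjectivity of $\pi_x$ by extending a local section to a global one, and then deduce the two costalk identifications from Lemma \ref{lemma-stalks} and from $\CB(w)^{\ge x}\cong\Gamma(\{\ge x\},\SB(w))$ respectively. The paper compresses the extension step to the single sentence "from the construction of $\SB(w)$ it follows that each element in $\SB(w)^x$ appears as a component of a global section," whereas you make this explicit by first lifting upward via the definition of $\SB(w)^{\delta x}$ and then extending downward along a linear extension of the order so that the defined set stays upward-closed -- which is exactly the induction implicit in Theorem \ref{theorem-BMPsheaf}.
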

\begin{proof} By the characterization in the previous lemma and the
  freeness of the stalks $\SB(w)^y$ it is
  clear that we have an embedding $\CB(w)^x\subset\SB(w)^x$. From the
  construction of $\SB(w)$ it follows that each element in $\SB(w)^x$
  appears as a component of a global section, hence the above
  embedding is also a surjection. More generally, since each local section
  of $\SB(w)$ extends to a global section we have induced isomorphisms $\CB(w)^{\ge x}\cong\Gamma(\{\ge
  x\},\SB(w))$ and $\CB(w)^{> x}\cong\Gamma(\{>
  x\},\SB(w))$. From the definition it is clear that $\SB(w)_{[x]}$ is
  the kernel of the restriction map $\Gamma(\{\ge
  x\},\SB(w))\to\Gamma(\{>x\},\SB(w))$, hence we get an induced
  isomorphism $\CB(w)_{[x]}\cong\SB(w)_{[x]}$. Finally,
  $\CB(w)_x\cong\SB(w)_x$ follows directly from the previous lemma.
\end{proof}
Hence, in order to prove Conjecture \ref{conj-KL} we can calculate the
graded character of the stalks $\CB(w)^x$ for all pairs $(x,w)$.  The
$\CB(w)$ have the advantage that they are modules over a commutative
algebra. Moreover, they can be constructed by an alternative method, namely as
direct summands of the {\em Bott--Samelson modules}.

\subsection{Bott--Samelson modules}

Fix a simple affine reflection $s\in\hCS$ and define the sub-$S$-algebra
$$
\CZ^s:= \left\{(z_x)\in \CZ\mid z_x=z_{xs}\text{ for all $x\in \hCW$}\right\}.
$$

\begin{definition} The translation functor associated to $s$ is the functor $\vartheta_s$ that maps a $\CZ$-module $\CM$ to the $\CZ$-module $\CZ\otimes_{\CZ^s} \CM$, and a map $f\colon \CM\to \CN$ of $\CZ$-modules to the map $1\otimes f\colon \CZ\otimes_{\CZ^s} \CM\to \CZ\otimes_{\CZ^s} \CN$. 
\end{definition}
Let $\CM(e)$ be the $\CZ$-module that is free of rank one over $S$ and
on which $(z_x)\in\CZ$ acts by multiplication with $z_e$.

\begin{definition} Let $\bs=(s_1,\dots, s_l)$ be a sequence in $\hCS$. The module $\CB(\bs):=\vartheta_{s_l}\cdots \vartheta_{s_1}(\CM(e))$ is called the {\em Bott--Samelson module} associated to $\bs$. 
\end{definition}

The following theorem shows that it is possible to construct the
global sections $\CB(w)$ of the Braden-MacPherson sheaf $\SB(w)$
directly in the category of $\CZ$-modules, i.e.~without refering to
sheaves on a moment graph. Its  proof  is
contained in \cite{FieTAMS} (cf.~ the proof of Theorem 6.1 and
Corollary 6.5 in {\em loc.~cit.}).

\begin{theorem}\label{theorem-BSspec} Fix $w\in\hCW$ and suppose that $(K,w)$ is a
  GKM-pair. Let $\bs=(s_1,\dots, s_l)$ be a sequence in $\hCS$ such that $w=s_1\cdots s_l$ is a reduced expression. Then there are $x_1,\dots,x_n<w$ and $l_1,\dots, l_n\in \DZ$ such that
  $$
  \CB(\bs)\cong\CB(w)\oplus\bigoplus_{i=1}^n \CB(x_i)\{l_i\}.
  $$
\end{theorem}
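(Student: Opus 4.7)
The plan is a straightforward induction on $l$ using the iterative construction $\CB(\bs)=\vartheta_{s_l}(\CB(\bs'))$, where $\bs'=(s_1,\dots,s_{l-1})$. For $l=0$ the sequence $\bs$ is empty, so $\CB(\bs)=\CM(e)$, and one checks directly from Theorem \ref{theorem-BMPsheaf} (taking global sections) that $\CM(e)\cong\CB(e)$, settling the base case.

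For the inductive step, write $s=s_l$ and $w'=s_1\cdots s_{l-1}$, so that $w=w's$ with $l(w)=l(w')+1$. By the inductive hypothesis, $\CB(\bs')\cong\CB(w')\oplus\bigoplus_j\CB(y_j)\{m_j\}$ with $y_j<w'$. Since $\vartheta_s=\CZ\otimes_{\CZ^s}(-)$ is an additive functor, I obtain
$$
\CB(\bs)\;\cong\;\vartheta_s(\CB(w'))\;\oplus\;\bigoplus_j\vartheta_s(\CB(y_j))\{m_j\}.
$$
The proof therefore reduces to a single translation calculation: for every $x\in\hCW$ with $x\le w$ and every $s\in\hCS$, one needs that $\vartheta_s(\CB(x))$ decomposes, in the category of $\CZ$-modules, as $\CB(xs)$ plus summands of the form $\CB(z)\{n\}$ with $z<xs$ when $xs>x$, and as $\CB(x)\{1\}\oplus\CB(x)\{-1\}$ plus summands $\CB(z)\{n\}$ with $z<x$ when $xs<x$.

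Granting this key calculation, the theorem follows as follows. The summand $\vartheta_s(\CB(w'))$ falls into the first case (because the expression $\bs$ is reduced, so $w's>w'$) and contributes exactly one copy of $\CB(w)$ together with summands $\CB(z)\{n\}$ having $z<w$. For each $y_j<w'$, the Lifting Property for the Bruhat order gives $y_js\le w'<w$ when $y_js<y_j$, and $y_js\le w$ with $y_js\ne w$ (since $y_js=w$ would force $y_j=w'$) when $y_js>y_j$; in either case every summand of $\vartheta_s(\CB(y_j))$ is of the form $\CB(z)\{n\}$ with $z<w$. Assembling these contributions yields the claimed decomposition. Uniqueness of the summands requires a graded Krull--Schmidt property for the additive subcategory generated by the $\CB(x)$'s, which reduces to showing that $\End_\CZ(\CB(x))$ is a local graded ring; this follows because a degree-zero endomorphism restricts on the top stalk $\CB(x)^x\cong S$ to multiplication by a scalar and is an isomorphism iff this scalar is nonzero.

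The main obstacle is the key translation identity for $\vartheta_s(\CB(x))$. My approach is to transport the problem to the sheaf side via Proposition \ref{prop-stgsec} and compute stalk by stalk, exploiting that $\CZ$ is free of rank two as a $\CZ^s$-module (with basis coming from the right $\{e,s\}$-action). This lets one identify $\vartheta_s(\CB(x))$ with an explicit $\CZ$-module whose stalks satisfy precisely the defining properties of $\SB(xs)$ at each vertex strictly below $xs$, together with a forced correction term coming from the GKM condition at vertices fixed by the relevant simple reflection; matching those defining properties (freeness, surjectivity on the edge maps, projective cover at each vertex) pins down $\CB(xs)$ as a direct summand and leaves the rest as lower-order $\CB(z)$'s to be peeled off by induction on support.
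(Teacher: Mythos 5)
Your inductive skeleton---writing $\CB(\bs)=\vartheta_{s_l}\CB(\bs')$, invoking additivity of $\vartheta_s$, reducing everything to a translation lemma for $\vartheta_s(\CB(x))$, and doing the Bruhat-order bookkeeping via the Lifting Property---is the right mechanism, and the base case and the combinatorial steps are fine. But the translation lemma you isolate essentially \emph{is} the theorem, and the sketch you offer for it has a genuine gap.

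The assertion that $\vartheta_s(\CB(x))$ corresponds to a sheaf whose ``stalks satisfy precisely the defining properties of $\SB(xs)$'' cannot be right: by the uniqueness clause of Theorem \ref{theorem-BMPsheaf}, if condition (3) (projective cover at every vertex) held everywhere for $\vartheta_s(\CB(x))$, it would be isomorphic to $\CB(xs)$ outright, with no extra summands---contradicting the very decomposition you are trying to establish. What actually happens is that the projective cover condition fails at certain vertices, and the whole difficulty is to show that the resulting surplus is itself a direct sum of shifted $\CB(z)$'s with $z<xs$. Your phrase ``leaves the rest as lower-order $\CB(z)$'s to be peeled off by induction on support'' presupposes exactly this. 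Making the peeling-off work requires an a priori structure theorem: one must exhibit a class of $\CZ$-modules (admitting Verma flags, with suitable flabbiness or extension-of-sections properties) that is closed under $\vartheta_s$ and under taking complements of Braden--MacPherson summands, and whose indecomposables are precisely the $\CB(z)\{n\}$. The paper does not prove this here; it cites the proofs of Theorem 6.1 and Corollary 6.5 of \cite{FieTAMS}, which obtain the needed structure through a comparison with Soergel's category of special bimodules and Soergel's structural results there. Separately, your locality claim for $\End_\CZ(\CB(x))$ is under-justified as stated: that a degree-zero endomorphism acting by a nonzero scalar on $\CB(x)^x$ must be an isomorphism requires a descending induction through the stalks, using the projective cover property at each vertex (hence the GKM hypothesis), not just the observation about the top stalk.
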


\subsection{A duality}\label{sec-duality}
For  graded $S$-modules $M$ and $N$ let
$$
\Hom^n_S(M,N):=\Hom_S(M,N\{n\})
$$ 
be the space of degree $n$
homomorphisms between $M$ and $N$, and let
$$
\dual M:=\bigoplus_{n\in\DZ}\Hom^n_S(M,S\{n\})
$$
be the dual of $M$. This is a graded $S$-module as well, and we have
$\dual S\cong S$ and $\dual(M\{l\})\cong(\dual M)\{-l\}$ as $S$-modules. 

Each $\CZ$-module is naturally an $S$-module as $\CZ$ is a unital $S$-algebra. If $\CM$ is a $\CZ$-module, then $\dual \CM$ is a $\CZ$-module as
well with the action  given by 
$$
(z.f)(m):=f(z.m)
$$
for all $z\in\CZ$, $f\in\dual \CM$ and $m\in\CM$. 

\begin{theorem}[{\cite[Cor.~5.7, Thm.~6.1, Prop.~7.1]{FieTAMS}}] \label{theorem-duality}
\begin{enumerate}
\item For each sequence $\bs=(s_1,\dots,s_l)$ of length
  $l$ the Bott--Samelson module
  $\CB(\bs)$ is self-dual up to a shift by $2l$, i.e.~ we have
$$
\dual\CB(\bs)\cong \CB(\bs)\{2l\}.
$$ 
This isomorphism induces isomorphisms
$\dual(\CB(\bs)^x)\cong\CB(\bs)_x\{2l\}$ for all $x\in\hCW$.
\item For each $w\in\hCW$ the $\CZ$-module $\CB(w)$ is
  self--dual up to a shift by $2l(w)$, i.e.~ we have
$$
\dual \CB(w)\cong\CB(w)\{2l(w)\}.
$$
This isomorphism induces isomorphisms
$\dual(\CB(w)^x)\cong\CB(w)_x\{2l(w)\}$ for all $x\in\hCW$. 
\end{enumerate}
\end{theorem}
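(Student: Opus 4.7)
The plan is to prove (1) by induction on the length $l$ of $\bs$, and then to deduce (2) via Krull--Schmidt, exploiting the fact that $\CB(w)$ occurs as a distinguished indecomposable summand of $\CB(\bs)$ for any reduced expression $\bs$ of $w$.

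For (1), the base case $l=0$ is trivial: $\CB(\emptyset)=\CM(e)\cong S$ and $\dual S\cong S$. The inductive step reduces to a key compatibility: for any $\CZ$-module $\CM$ that is graded free over $S$, there is a natural isomorphism
$$
\dual(\vartheta_s \CM)\cong \vartheta_s(\dual \CM)\{2\}.
$$
To establish it, one verifies that $\CZ$ is a graded Frobenius extension of $\CZ^s$ of rank $2$: right multiplication by $s$ partitions $\hCW$ into two-element orbits, exhibiting $\CZ$ as a projective $\CZ^s$-module of rank $2$, and one constructs an explicit $\CZ^s$-linear trace map $\CZ\to\CZ^s\{-2\}$ of degree $-2$ realizing $\CZ$ as its own $\CZ^s$-linear dual up to the shift by $2$. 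Combining this Frobenius structure with Hom--tensor adjunction yields the displayed compatibility, and iterating it through $\CB(\bs)=\vartheta_{s_l}\cdots\vartheta_{s_1}\CM(e)$ proves $\dual\CB(\bs)\cong\CB(\bs)\{2l\}$. The induced stalk/costalk isomorphism is then a formal consequence of Lemma \ref{lemma-stalks}: the stalk $\CM^x$ is the largest quotient, and the costalk $\CM_x$ the largest submodule, of $\CM$ on which $\CZ$ acts through the projection $(z_y)\mapsto z_x$. Duality interchanges submodules and quotient modules annihilated by any prescribed ideal, so it interchanges the functors $\CM\mapsto\CM^x$ and $\CM\mapsto\CM_x$; combined with $\dual\CB(\bs)\cong\CB(\bs)\{2l\}$ and the compatibility of costalk with degree shifts, this produces $\dual(\CB(\bs)^x)\cong\CB(\bs)_x\{2l\}$.

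For (2), fix a reduced expression $\bs$ of $w$ and invoke Theorem \ref{theorem-BSspec} to write
$$
\CB(\bs)\cong \CB(w)\oplus \bigoplus_{i=1}^n \CB(x_i)\{l_i\}
$$
with $x_i<w$. Arguing by induction on the Bruhat order, each $\CB(x_i)$ is already known to be self-dual up to the shift $2l(x_i)$. Applying $\dual$ to the decomposition, using $\dual\CB(\bs)\cong\CB(\bs)\{2l(w)\}$ from (1), and invoking Krull--Schmidt uniqueness in the category of graded $\CZ(\{\le w\})$-modules, one concludes that the summand $\CB(w)$ must be matched with a shifted copy of itself, necessarily $\CB(w)\{2l(w)\}$. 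The stalk/costalk identification then follows either by transporting the $\CB(\bs)$-level isomorphism through the direct-sum decomposition, or equivalently by the abstract argument of the previous paragraph applied directly to $\CB(w)$.

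The main obstacle throughout is the Frobenius step: producing an explicit and well-behaved trace map $\CZ\to\CZ^s\{-2\}$ in the moment-graph setting, where $\CZ$ is an infinite product rather than a finitely generated polynomial ring as in the Soergel bimodule framework, is the technical heart of the argument; once it is in hand, the remaining steps are formal manipulations with Hom--tensor adjunction and Krull--Schmidt.
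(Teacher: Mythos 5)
The paper does not prove Theorem \ref{theorem-duality}; it is imported verbatim from \cite{FieTAMS} (Cor.~5.7, Thm.~6.1, Prop.~7.1), so there is no in-paper argument to compare against. Your reconstruction nevertheless follows the same route as the cited source: the self-duality of $\CB(\bs)$ comes from the compatibility of $\vartheta_s$ with $\dual$ up to a shift by $2$, which in turn rests on the rank-two Frobenius extension $\CZ^s\subset\CZ$; and the self-duality of $\CB(w)$ is extracted from that of $\CB(\bs)$ by the decomposition of Theorem \ref{theorem-BSspec}, Bruhat induction, and Krull--Schmidt, with the summand supported at $w$ pinned down by support considerations. One small remark on what you call the technical heart: the Frobenius trace is in fact quite cheap once one knows $\CZ=\CZ^s\oplus c^{\alpha}\CZ^s$ (Lemma 5.1 of \cite{FieTAMS}, quoted in the proof of Proposition \ref{prop-BSX}); the trace is essentially the projection onto the $c^{\alpha}\CZ^s$ summand, shifted by $-2$, so the infinite product causes no difficulty. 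Your stalk/costalk step, asserting $\dual(\CM^x)\cong(\dual\CM)_x$ and then transporting through $\dual\CB(\bs)\cong\CB(\bs)\{2l\}$, is also correct: one checks via Lemma \ref{lemma-stalks} that a functional on $\CM$ factors through the stalk $\CM^x$ precisely when $\CZ$ acts on it through $(z_y)\mapsto z_x$ in the dual module, which is the defining property of the costalk $(\dual\CM)_x$. Overall the proposal is a faithful reconstruction of the argument the paper refers to.
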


Let $\bs=(s_1,\dots,s_l)$ be a sequence in $\hCS$, and let
$J(\bs)\subset\hCW$ be the set of all elements in $\hCW$ that are
smaller than or equal to some
subword of $s_1\cdots s_l$. Let $K$ be a
field.
\begin{definition} We say that $(K,\bs)$ is a GKM-pair if $(K,J(\bs))$ is a
GKM-pair. 
\end{definition}
If $(K,w)$ is a GKM-pair, then the Bott--Samelson module $\CB(\bs)$ decomposes
into a direct sum of shifted copies of various $\CB(w)$'s with $w\in
J(\bs)$. This follows from Theorem \ref{theorem-BSspec}. Theorem \ref{theorem-duality}, Lemma \ref{lemma-costalk}, Proposition
\ref{prop-stgsec} and the fact that the number of edges of $\hCG_{\le
  w}$ that end at $x$ is $l(x)$, yield  the following:

\begin{corollary}\label{cor-specmod} 
\begin{enumerate} Suppose that $(K,\bs)$ is a GKM-pair. Let $x\in\hCW$.
\begin{enumerate}
\item The $S$-modules $\CB(\bs)^x$, $\CB(\bs)_x$ and
  $\CB(\bs)_{[x]}$ are graded free of finite rank.
\item  Let the numbers $a_1,\dots, a_r$ be defined by
$\CB(\bs)^x\cong\bigoplus_{i=1,\dots r} S\{a_i\}$. 
Then we have
\begin{align*}
\CB(\bs)_{x}&\cong\bigoplus_{i=1,\dots,r} S\{-a_i-2l\}, \\
\CB(\bs)_{[x]}&\cong\bigoplus_{i=1,\dots,r} S\{-a_i-2l+2l(x)\}.
\end{align*}
\end{enumerate}
\item
Let $w\in\hCW$ and suppose that $(K,w)$ is a GKM-pair. Let $x\in\hCW$.
\begin{enumerate}
\item
The $S$-modules $\CB(w)^x$, $\CB(w)_x$ and $\CB(w)_{[x]}$ are graded  free
  of finite rank. 
\item  Let the numbers $b_1,\dots,b_r$  be defined by $
\CB(w)^x\cong\bigoplus_{i=1,\dots,r} S\{b_i\}$.
Then we have 
\begin{align*}
\CB(w)_{x}&\cong\bigoplus_{i=1,\dots,r} S\{-b_i-2l(w)\}, \\
\CB(w)_{[x]}&\cong\bigoplus_{i=1,\dots,r} S\{-b_i-2l(w)+2l(x)\}.
\end{align*}
\end{enumerate}
\end{enumerate}
\end{corollary}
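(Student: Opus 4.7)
The plan is to reduce both parts of the corollary to the corresponding sheaf-level statements for the Braden--MacPherson sheaves $\SB(w)$ via Proposition~\ref{prop-stgsec}, and then combine Theorem~\ref{theorem-duality}, Lemma~\ref{lemma-costalk}(2), and the combinatorial identity that exactly $l(x)$ edges of $\hCG_{\le w}$ terminate at $x$.

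I would first carry out part (2). By Proposition~\ref{prop-stgsec}, the stalk $\CB(w)^x$ identifies canonically with $\SB(w)^x$ as a graded $S$-module, and Theorem~\ref{theorem-BMPsheaf}(3) presents $\rho_{x,\delta x}$ as a projective cover of a finitely generated graded $S$-module, so $\SB(w)^x$ (and hence $\CB(w)^x$) is graded free of finite rank. Writing $\CB(w)^x\cong\bigoplus_{i=1}^r S\{b_i\}$, the self-duality of Theorem~\ref{theorem-duality}(2) yields $\dual\CB(w)^x\cong\CB(w)_x\{2l(w)\}$, and since dualization sends $S\{n\}$ to $S\{-n\}$, this forces $\CB(w)_x\cong\bigoplus_i S\{-b_i-2l(w)\}$. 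For the $[x]$-piece, Proposition~\ref{prop-stgsec} again identifies $\CB(w)_{[x]}$ with $\SB(w)_{[x]}$, and Lemma~\ref{lemma-costalk}(2) writes $\SB(w)_x=\alpha_1\cdots\alpha_{l(x)}\,\SB(w)_{[x]}$, where $\alpha_1,\dots,\alpha_{l(x)}$ are the degree-$2$ labels of the $l(x)$ edges of $\hCG_{\le w}$ ending at $x$. Since the GKM-property makes these labels act injectively on the free stalk, multiplication by $\alpha_1\cdots\alpha_{l(x)}$ is a degree-$2l(x)$ graded isomorphism $\SB(w)_{[x]}\stackrel{\sim}{\to}\SB(w)_x$, i.e.\ $\SB(w)_{[x]}\cong\SB(w)_x\{2l(x)\}$, and therefore $\CB(w)_{[x]}\cong\bigoplus_i S\{-b_i-2l(w)+2l(x)\}$.

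Part (1) would then be handled by essentially the same pattern. Graded freeness of $\CB(\bs)^x$, $\CB(\bs)_x$, and $\CB(\bs)_{[x]}$ reduces to part (2) once one knows a direct-sum decomposition $\CB(\bs)\cong\bigoplus_j \CB(y_j)\{k_j\}$, since the operations $(-)^x$, $(-)_x$, $(-)_{[x]}$ are additive and commute with grading shifts. The shift relating $\CB(\bs)^x$ to $\CB(\bs)_x$ is then read off globally from Theorem~\ref{theorem-duality}(1), which supplies $\dual\CB(\bs)^x\cong\CB(\bs)_x\{2l\}$ without reference to any decomposition. Finally, the summand-wise isomorphism $\CB(y_j)_{[x]}\cong\CB(y_j)_x\{2l(x)\}$ from part (2) sums to $\CB(\bs)_{[x]}\cong\CB(\bs)_x\{2l(x)\}$, yielding the claimed shifts.

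The principal obstacle I foresee is securing the direct-sum decomposition of $\CB(\bs)$ into shifted Braden--MacPherson summands for \emph{arbitrary} sequences $\bs$, since Theorem~\ref{theorem-BSspec} is stated only when $\bs$ is a reduced expression of some $w\in\hCW$. To bridge this gap one proceeds by induction on the length of $\bs$, writing $\CB(\bs)=\vartheta_{s_l}\CB((s_1,\dots,s_{l-1}))$ and analyzing the effect of $\vartheta_{s_l}$ on each summand $\CB(y)\{k\}$: when $ys_l>y$ one obtains a copy of $\CB(ys_l)\{k\}$ together with possibly lower Bruhat summands, and when $ys_l<y$ the functor splits off $\CB(y)\{k+1\}\oplus\CB(y)\{k-1\}$. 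Once this extension is in hand, the remainder of the argument is purely formal bookkeeping with graded ranks.
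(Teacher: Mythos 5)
Your proposal is correct and follows essentially the same route as the paper: reduce the $\CZ$-module statements to sheaf-level statements via Proposition~\ref{prop-stgsec}, read off graded freeness from the projective-cover construction in Theorem~\ref{theorem-BMPsheaf}, use Theorem~\ref{theorem-duality} for the relation between stalk and costalk, and use Lemma~\ref{lemma-costalk}(2) together with the count that exactly $l(x)$ edges of $\hCG_{\le w}$ end at $x$ for the $[x]$-shift; part (1) is then obtained from part (2) through the direct-sum decomposition of $\CB(\bs)$. Your degree-shift bookkeeping (multiplication by a product of $l(x)$ labels gives a degree-zero isomorphism $\SB(w)_{[x]}\cong\SB(w)_x\{2l(x)\}$, and $\dual\CB(w)^x\cong\CB(w)_x\{2l(w)\}$ forces $\CB(w)_x\cong\bigoplus S\{-b_i-2l(w)\}$) is right in the paper's conventions.

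The one place you add something the paper does not spell out is the extension of Theorem~\ref{theorem-BSspec} to non-reduced sequences. You are right that the theorem as stated requires $\bs$ to be a reduced expression, while the sentence preceding the corollary asserts the decomposition for arbitrary $\bs$ "follows from Theorem~\ref{theorem-BSspec}" without elaboration. Your inductive argument---peeling off $\vartheta_{s_l}$ and using that $\vartheta_s\CB(y)$ decomposes into $\CB(ys)$ plus lower terms when $ys>y$, and into $\CB(y)\{1\}\oplus\CB(y)\{-1\}$ when $ys<y$---is the standard way to bridge this gap, matching the Hecke-algebra identity $\ul H_s\ul H_y=(v+v^{-1})\ul H_y$ for $ys<y$. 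Strictly speaking this last splitting is not proved in the present paper either, so if you want your argument to be self-contained you should cite it from \cite{FieTAMS} (it is essentially contained in the proof of Theorem 6.1 there), or observe that it follows from the equivalence with Soergel's special bimodules where it is classical. With that citation in place, the proof is complete.
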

\begin{remark} For a graded free $S$-module $M\cong
  \bigoplus_{i=1,\dots r} S\{l_i\}$ we set $\rk\, M:=
  v^{-l_i}\in\DZ[v,v^{-1}]$ (note that in the definiton of
  $\rk^\prime$ in Section \ref{sec-mainconj} we used the opposite sign
  convention). From the corollary above we deduce
$$
\rk^\prime\,\CB(w)^{x}=\rk\, \CB(w)_{[x]}v^{2l(x)-2l(w)},
$$
and, together with Proposition \ref{prop-stgsec}, we get 
$$
h(\SB(w))=\sum_{x\le w} \rk\, \CB(w)_{[x]}v^{2l(x)-l(w)} T_x.
$$
We see that our Main Conjecture \ref{conj-KL} coincides with
Conjecture 8.3 of \cite{FieTAMS} and Vermutung 1.3 in \cite{SoeJIMJ}
in the cases discussed there. (Note that $\tilde T_x=v^{l(x)} T_x$ and
that the $B(w)$ in \cite{FieTAMS} are shifted by $l(w)$ with respect
to our $\CB(w)$, by Corollary 6.5 in \cite{FieTAMS}). When we stated
Conjecture \ref{conj-KL} we did not know  that $\SB(w)_{[x]}$ is a
graded free $S$-module, which is why we used the graded character of the
stalks.
\end{remark}

Now we can state the following conditions that are equivalent to our
main conjecture.

\begin{proposition}\label{prop-sme} Suppose that $(K,w)$ is a
  GKM-pair. Then Conjecture \ref{conj-KL} is equivalent to each of the
  following statements:
\begin{enumerate}
\item For each $x< w$ the $S$-module $\CB(w)^x$ is
  generated in degrees $<l(w)-l(x)$.
\item For each $x<w$ the $S$-module $\CB(w)_{[x]}$ lives in degrees
  $>l(w)-l(x)$.
  \item For each $x<w$ the $S$-module $\CB(w)_{x}$ lives in degrees $>l(w)+l(x)$. 
\end{enumerate}
\end{proposition}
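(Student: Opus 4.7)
The main reduction is via Corollary \ref{cor-specmod}(2)(b). Fix $x<w$ and decompose $\CB(w)^x \cong \bigoplus_i S\{b_i\}$; that corollary then gives $\CB(w)_{[x]} \cong \bigoplus_i S\{-b_i-2l(w)+2l(x)\}$ and $\CB(w)_x \cong \bigoplus_i S\{-b_i-2l(w)\}$. A graded free module $\bigoplus_i S\{m_i\}$ is generated in degrees $\{-m_i\}_i$ and its lowest non-zero homogeneous component sits in degree $-\max_i m_i$. Translating each of the three conditions along these dictionaries, all three reduce to the single arithmetic inequality
\[
  b_i > l(x) - l(w) \quad\text{for all $i$ and all $x<w$.}
\]
This settles the mutual equivalence of (1), (2), (3) at once.

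To link the displayed inequality to Conjecture \ref{conj-KL}, I use Proposition \ref{prop-stgsec} to identify $\rk^\prime\SB(w)^x = \rk^\prime\CB(w)^x = \sum_i v^{b_i}$, and then expand
\[
  h(\SB(w)) = \sum_{x \le w} p_{x,w}(v)\, H_x, \qquad p_{x,w}(v) := v^{l(w)-l(x)} \sum_i v^{b_i}.
\]
The normalisation $\SB(w)^w \cong S$ gives $p_{w,w}=1$, and the support condition $\supp\,\SB(w) \subset \{\le w\}$ gives $p_{x,w}=0$ for $x \not\le w$; both match property (\ref{prop of H: support}) of Theorem \ref{self-dual elts}. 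The inequality $b_i>l(x)-l(w)$ is equivalent to $l(w)-l(x)+b_i\ge 1$ for every $i$, i.e.\ to $p_{x,w}(v) \in v\DZ[v]$, which is precisely (\ref{prop of H: norm}). Hence $h(\SB(w))$ satisfies (\ref{prop of H: support}) and (\ref{prop of H: norm}) of that theorem exactly when (1)--(3) hold.

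To close the equivalence with the conjecture I must also verify self-duality, $d(h(\SB(w))) = h(\SB(w))$. Once this is in place, the uniqueness part of Theorem \ref{self-dual elts} forces $h(\SB(w))=\ul{H}_w$, and conversely $\ul{H}_w$ satisfies (\ref{prop of H: norm}) by construction, so the conjecture is equivalent to the displayed inequality on the $b_i$'s, hence to each of (1), (2), (3). I would derive self-duality from the sheaf-level self-duality $\dual\CB(w)\cong \CB(w)\{2l(w)\}$ of Theorem \ref{theorem-duality}(2), combining it with the two alternative expressions for $h(\SB(w))$---the one via stalks and the one via $\CB(w)_{[x]}$ recorded in the Remark immediately preceding the proposition---and applying the substitution $v\mapsto v^{-1}$ via Corollary \ref{cor-specmod}(2)(b).

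The main obstacle is precisely this last step. Sheaf-level duality only interchanges stalks with costalks at each fixed vertex, whereas the Hecke-algebra anti-involution $d$ inverts group elements and sends $v \mapsto v^{-1}$, so matching the per-vertex exchange against the full action of $d$ on $\hbH$ requires careful bookkeeping of the shifts $v^{l(w)}$, $v^{2l(w)}$ and $v^{2l(x)}$ that appear in the two expressions for $h(\SB(w))$. Granted this compatibility, the remainder is a routine manipulation of graded ranks.
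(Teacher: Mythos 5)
Your reduction of (1)\,$\Leftrightarrow$\,(2)\,$\Leftrightarrow$\,(3) to the single inequality $b_i>l(x)-l(w)$ via Corollary \ref{cor-specmod}(2)(b) is correct, and it is exactly what the paper has in mind when it calls this equivalence an ``immediate consequence'' of that corollary. For the remaining equivalence with Conjecture~\ref{conj-KL}, however, the paper simply cites \cite[Prop.~8.4]{FieTAMS}, whereas you attempt a self-contained argument via the uniqueness characterization of $\ul{H}_w$ in Theorem~\ref{self-dual elts}. That route is the natural one (and is presumably what the cited reference carries out), and your computation that the degree condition on $\CB(w)^x$ is equivalent to $p_{x,w}\in v\DZ[v]$, together with $p_{w,w}=1$ and the support condition, is correct.

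The gap you flag is, though, genuine and not merely ``routine manipulation of graded ranks.'' To invoke uniqueness in Theorem~\ref{self-dual elts} you need $d(h(\SB(w)))=h(\SB(w))$ \emph{unconditionally}, i.e.\ before you know conditions (1)--(3). The module-level duality $\dual\CB(w)\cong\CB(w)\{2l(w)\}$ from Theorem~\ref{theorem-duality}(2) exchanges $\CB(w)^x$ with $\CB(w)_x$ \emph{at each fixed vertex} $x$, and indeed this is what produces the two alternative formulas for $h(\SB(w))$ in the Remark. But the Hecke-algebra duality $d$ acts by $d(T_x)=T_{x^{-1}}^{-1}$, and re-expanding $T_{x^{-1}}^{-1}$ in the basis $\{T_y\}$ mixes vertices (these are the $R$-polynomial coefficients), so $d$ is \emph{not} a per-vertex operation and cannot be read off from the per-vertex stalk/costalk exchange alone. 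Bridging that mismatch is the substantive content of the omitted step --- typically it is handled by an inductive argument along the decomposition of a Bott--Samelson module (Theorem~\ref{theorem-BSspec}) together with Lemma~\ref{lemma-rankind}, not by bookkeeping of shifts. The paper avoids reproducing this argument by quoting \cite[Prop.~8.4]{FieTAMS}; to make your proof complete you would either need to do the same, or carry out that induction explicitly.
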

\begin{proof} The equivalence between the three statements above is an
  immediate consequence of Corollary \ref{cor-specmod}. That the
  Conjecture \ref{conj-KL} and  the statement (2) are equivalent is
  proven in \cite[Proposition 8.4]{FieTAMS}.
\end{proof}

Finally we   associate
the following character to $\CB(\bs)$:
\begin{align*}
h(\CB(\bs))& :=\sum_{x\in\hCW} \rk^\prime\, \CB(\bs)^{x} v^{l}T_x\\
&= \sum_{x\in\hCW} \rk\, \CB(\bs)_{[x]} v^{2l(x)-l}T_x
\end{align*}
Up to a factor of $v^{-l}$ on the right hand side this coincides with
the definition of $h_{\le,l}$ in Section 4.3 of \cite{FieModRep}. The
Lemma 4.4. in {\em loc.~cit.} yields the following result.
\begin{lemma}\label{lemma-rankind}
 For a sequence $\bs=(s_1,\dots,s_l)$ in $\hCS$  we have 
$$
h(\CB(\bs)) = \ul H_{s_1}\cdots \ul H_{s_l}. 
$$
In particular, the graded rank of $\CB(\bs)^x$ for fixed $x$ is
independent of the base field $K$ as long as $(K,\bs)$ is a GKM-pair. 
\end{lemma}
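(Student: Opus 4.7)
The plan is to prove the first claim by induction on $l$, establishing multiplicativity of $h$ under the translation functors, and then to deduce the second claim as a formal consequence. The core identity to set up is
$$
h(\vartheta_s \CM) = h(\CM) \cdot \ul{H}_s
$$
for each $s \in \hCS$ and each $\CM = \CB(\bs')$ arising as an iterated Bott--Samelson module; iterating $l$ times and feeding in the base case will then give the desired product formula.

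The base case $l = 0$ is the equality $h(\CM(e)) = T_e = \ul{H}_e$, which follows at once since $\CM(e)$ is supported only at $e$ with stalk $S$. For the inductive step, I would inspect the stalks of $\vartheta_s \CM = \CZ \otimes_{\CZ^s} \CM$. Since $\CZ$ is free of rank two over $\CZ^s$, and since this splitting can be carried out locally on each $s$-orbit $\{x, xs\}$ of vertices of $\hCG$, the stalk of $\vartheta_s \CM$ at $x$ should be expressible in terms of the stalks of $\CM$ at $x$ and at $xs$. A direct calculation then matches the outcome, at the level of graded ranks, with the multiplication rules $H_x \ul{H}_s = H_{xs} + v H_x$ (if $xs > x$) and $H_x \ul{H}_s = H_{xs} + v^{-1} H_x$ (if $xs < x$), both of which follow from the quadratic relation $T_s^2 = v^{-2} T_e + (v^{-2} - 1) T_s$ in $\hbH$.

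The main obstacle is the stalkwise analysis of $\vartheta_s \CM$: one must ensure torsion-freeness and graded freeness of all stalks in sight, and verify that the rank-two decomposition of $\CZ$ over $\CZ^s$ survives base change to the level of stalks. This is precisely where the GKM-hypothesis on $(K, \bs)$ enters, and it is the technical content of Lemma 4.4 of \cite{FieModRep}. Since the author refers directly to that result, the remaining task is to check that the character $h$ defined here agrees, up to the factor $v^{-l}$ already noted in the excerpt, with the character $h_{\le, l}$ of \emph{loc.~cit.}

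For the second claim, note that the right-hand side $\ul{H}_{s_1} \cdots \ul{H}_{s_l}$ lies in $\DZ[v, v^{-1}] \otimes_\DZ \hbH$ and thus carries no dependence on the field $K$. Reading off the coefficient of $v^{l(\bs)} T_x$ then recovers $\rk^\prime \CB(\bs)^x$ whenever $\CB(\bs)^x$ is graded free, and this is ensured by Corollary \ref{cor-specmod} as soon as $(K, \bs)$ is a GKM-pair. Hence the graded rank $\rk^\prime \CB(\bs)^x$ depends only on $\bs$ and $x$ and not on $K$, as claimed.
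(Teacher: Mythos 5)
Your proposal is correct and matches the paper's own treatment: the paper simply observes that $h(\CB(\bs))$ agrees (up to a factor of $v^{-l}$) with the character $h_{\le,l}$ of \cite{FieModRep} and then cites Lemma 4.4 of {\em loc.~cit.}, whose content is exactly the induction on translation functors that you outline. You correctly identify both the inductive mechanism (the rank-two decomposition $\CZ=\CZ^s\oplus c^{\alpha_s}\CZ^s$ under the assumption $\ch K\ne 2$, tracking stalks over $s$-pairs $\{x,xs\}$ against the Hecke relations) and the fact that the paper's proof reduces to that citation plus the $v^{-l}$ normalization check, so your argument and the paper's are essentially the same.
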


\section{A Lefschetz theory for sheaves on moment graphs} \label{sec-Leftheo}

We approach Conjecture \ref{conj-KL} by
studying an analogue of the {\em Hard Lefschetz property} of the
intersection cohomology of projective varieties. For this we have to
replace the symmetric algebra $S$ by a polynomial ring in one
variable.

Let $K[t]$ be the polynomial ring in one variable over the field
$K$ which we consider as a graded algebra with the grading given by setting $t$ in
degree $2$. We endow $K[t]$ with the structure of an $S$-algebra via the
homomorphism $\tau\colon S\to K[t]$ of graded
algebras that is given by $\tau(\alpha)=t$ for all $\alpha\in\widehat\Pi$.

Let $\bs=(s_1,\dots,s_l)$ be a sequence in $\hCS$ and denote by
$\hR^+_{\bs}\subset\hR^+$ the subset of all positive roots that appear
as a label on the graph $\hCG_{J(\bs)}$ (recall that
$J(\bs)\subset\hCW$ is the subset of elements that are smaller than or
equal to a subexpression of $s_1\cdots
s_l$).  Analogously, for $w\in\hCW$ let us define $\hR^+_{\le w}\subset\hR^+$ as the set of
all labels on $\hCG_{\le w}$.

We define the
{\em height} $\height(\alpha)$ of a positive affine root
$\alpha\in \hR^+$ as the number $n$ such that $\alpha$ can be written
as a sum of $n$ elements of $\widehat\Pi$ and we set
\begin{align*}
N(\bs)&:=\max_{\alpha\in\hR^+_{\bs}}\{\height(\alpha)\},\\
N(w)&:=\max_{\alpha\in\hR^+_{w}}\{\height(\alpha)\}.
\end{align*}
Clearly $N(w)=N(\bs)$ if $w=s_1\cdots s_l$ is a reduced expression for $w$.
We obviously have:
\begin{lemma} Suppose that $K$ is a field with $\ch\, K>N(\bs)$ ($\ch\, K>N(w)$). Then we have $\tau(\alpha)\ne 0$ for all $\alpha\in \hR^+_\bs$ ($\alpha\in \hR^+_w$). 
\end{lemma}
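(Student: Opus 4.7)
The plan is to unwind the definitions and observe that the map $\tau$ sends a positive affine root to an integer multiple of $t$, where the integer in question is precisely its height.

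First, I would recall the basic structural fact about the affine root system: every $\alpha\in\hR^+$ admits a unique expression $\alpha=\sum_{\beta\in\widehat\Pi} n_\beta\,\beta$ with non-negative integers $n_\beta\in\DN$, and by the definition of height given just above the lemma, $\height(\alpha)=\sum_{\beta\in\widehat\Pi} n_\beta$. This is the key combinatorial input (and is stated earlier: ``each $\beta\in\hR^+$ has a unique expression as a sum of elements in $\widehat\Pi$'').

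Next, since $\tau\colon S\to K[t]$ is a homomorphism of graded $K$-algebras that sends every simple affine root to $t$, linearity gives
\[
\tau(\alpha)=\sum_{\beta\in\widehat\Pi} n_\beta\,\tau(\beta)=\Bigl(\sum_{\beta\in\widehat\Pi} n_\beta\Bigr)\, t=\height(\alpha)\cdot t
\]
in $K[t]$, where $\height(\alpha)$ is regarded as an element of $K$ via the canonical ring map $\DZ\to K$.

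Finally I would conclude: $\tau(\alpha)=0$ in $K[t]$ if and only if $\height(\alpha)=0$ in $K$, i.e.~if and only if $\ch K$ divides the positive integer $\height(\alpha)$. Under the hypothesis $\ch K>N(\bs)\varge \height(\alpha)$ (respectively $\ch K>N(w)\varge \height(\alpha)$) for every $\alpha\in\hR^+_{\bs}$ (resp.~$\hR^+_w$), no such divisibility is possible since $\height(\alpha)$ is a positive integer strictly smaller than $\ch K$. Hence $\tau(\alpha)\ne 0$ as claimed. There is no real obstacle here; the statement is essentially a bookkeeping remark, and the only thing worth stressing is the uniqueness of the expansion into simple affine roots, which makes the formula $\tau(\alpha)=\height(\alpha)\,t$ well-defined and immediate.
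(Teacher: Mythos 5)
Your argument is correct and is precisely the implicit reasoning the paper has in mind; the paper offers no proof at all, stating the lemma with the preamble ``We obviously have:''. Unwinding $\tau(\alpha)=\height(\alpha)\,t$ via the unique expansion in $\widehat\Pi$ and then invoking $\ch K>N(\bs)\geq\height(\alpha)>0$ is exactly the intended bookkeeping.
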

Note that $\ch\,K > N(\bs)$ ($\ch\,K > N(w)$) implies that $(K,\bs)$ (or $(K,w)$) is a GKM-pair. For any
$S$-module $M$ we let $\ol M:= M\otimes_{S} K[t]$ be the $K[t]$-module
obtained by extension of scalars. For notational convenience we write
$\ol \CB(w)$ instead of $\ol{\CB(w)}$ and $\ol\CB(w)_x$ instead of
$\ol{\CB(w)_x}$, etc.  The natural inclusion $\CB(w)_x\subset\CB(w)^x$ yields a homomorphism $i_x\colon \ol\CB(w)_x\to\ol\CB(w)^x$.

\begin{lemma}\label{lemma-inj} Let $w\in\hCW$ and suppose that $\ch\, K>N(w)$. Then the homomorphism
  $i_x\colon\ol\CB(w)_x\to \ol\CB(w)^x$ is an injective map between
  graded free $K[t]$-modules for all $x\le w$.
  It is an isomorphism if we invert the variable $t$.
\end{lemma}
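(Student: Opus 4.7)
The plan is to combine the freeness statements of Corollary \ref{cor-specmod} with the divisibility statement of Lemma \ref{lemma-costalk}(1), after analyzing how the specialization $\tau$ acts on affine roots of bounded height.

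First, since $\ch K > N(w)$ forces $(K,w)$ to be a GKM-pair, Corollary \ref{cor-specmod}(2) guarantees that both $\CB(w)_x$ and $\CB(w)^x$ are graded free $S$-modules of the same finite rank $r$ (cf.~the shifted matching description $\CB(w)^x\cong\bigoplus_i S\{b_i\}$ and $\CB(w)_x\cong\bigoplus_i S\{-b_i-2l(w)\}$). Tensoring over $S$ with $K[t]$ via $\tau$ preserves graded freeness, so $\ol\CB(w)_x$ and $\ol\CB(w)^x$ are graded free $K[t]$-modules of the same rank $r$.

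Next I would observe that, for any $\alpha\in\hR^+$ with unique decomposition $\alpha=\sum_{\beta\in\widehat\Pi}n_\beta\beta$, one has $\tau(\alpha)=\height(\alpha)\cdot t$ in $K[t]$. Under the hypothesis $\ch K>N(w)$, for every label $\alpha$ appearing on $\hCG_{\le w}$ we have $\height(\alpha)\neq 0$ in $K$, so $\tau(\alpha)=c_\alpha\cdot t$ with $c_\alpha\in K^\times$. Applied to the product $P:=\alpha_1\cdots\alpha_n\beta_1\cdots\beta_m$ of all labels incident to $x$ (as in Lemma \ref{lemma-costalk}), this shows that $\tau(P)$ is a nonzero scalar multiple of $t^{n+m}$. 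Since $P\cdot\CB(w)^x\subset\CB(w)_x$ by Lemma \ref{lemma-costalk}(1), the cokernel of the inclusion $\CB(w)_x\subset\CB(w)^x$ is annihilated by $P$, and hence the cokernel of $i_x$ is annihilated by $\tau(P)$. It follows that $i_x$ becomes surjective after inverting $t$; as a surjection between free $K[t,t^{-1}]$-modules of the same finite rank, it is then necessarily an isomorphism. This yields the second assertion.

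Finally, for injectivity I would argue that $\ol\CB(w)_x$, being free, is torsion-free over $K[t]$, so it embeds into its localization $\ol\CB(w)_x[t^{-1}]$; the composition $\ol\CB(w)_x\xrightarrow{i_x}\ol\CB(w)^x\to\ol\CB(w)^x[t^{-1}]$ agrees with this injective localization followed by the inverse of the isomorphism $i_x[t^{-1}]$ just established, forcing $i_x$ itself to be injective. I do not foresee a serious obstacle here: the heavy lifting has already been carried out in Corollary \ref{cor-specmod} (which rests on the duality Theorem \ref{theorem-duality}), and the only point that makes genuine use of the characteristic hypothesis is the nonvanishing $\height(\alpha)\neq 0$ in $K$ for every $\alpha\in\hR^+_{\le w}$, which is precisely the content of $\ch K>N(w)$.
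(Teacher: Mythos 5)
Your argument is correct and follows essentially the same route as the paper's own proof: freeness of $\ol\CB(w)_x$ and $\ol\CB(w)^x$ comes from Corollary \ref{cor-specmod}, the inclusion becomes an isomorphism after inverting $t$ because the cokernel of $\CB(w)_x\subset\CB(w)^x$ is killed by the product of incident labels (Lemma \ref{lemma-costalk}(1)) and $\tau$ sends each such label to a nonzero multiple of $t$ under the hypothesis $\ch K>N(w)$, and injectivity of $i_x$ itself then follows from torsion-freeness of the free module $\ol\CB(w)_x$. The only cosmetic slip is the phrase ``followed by the inverse of the isomorphism'' near the end — you mean composing with $i_x[t^{-1}]$ itself, not its inverse — but this does not affect the argument.
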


\begin{proof} We have already seen that $\CB(w)^x$ and $\CB(w)_x$ are
  graded free $S$-modules.  Hence $\ol\CB(w)_x$ and $\ol \CB(w)^x$ are graded free
  $K[t]$-modules. 

Let $\alpha_1,\dots,\alpha_n$ be the labels of all edges of $\hCG_{\le
  w}$ that contain $x$ as a vertex. By Lemma \ref{lemma-costalk}, $\alpha_1\cdots
\alpha_n\SB(w)^x\subset \SB(w)_x$. Hence, if we invert the roots
$\alpha_1,\dots,\alpha_n$, then $\SB(w)_x$ and $\SB(w)^x$, and hence
$\CB(w)_x$ and $\CB(w)^x$ coincide. Since our assumptions imply that  $\tau(\alpha_i)\ne 0$ for $i=1,\dots, n$, the map $i_x\colon\ol{\CB}(w)_x\to \ol\CB(w)^x$ is an isomorphism after inverting the variable $t$. Since both spaces are free $K[t]$-modules, the map $i_x$ must be injective.
\end{proof}

\begin{proposition}\label{prop-lef} Let $w\in\hCW$ and suppose that $\ch\, K>N(w)$.  Then the following holds.
\begin{enumerate}
\item $\ol\CB(w)^w/\ol\CB(w)_w\cong K[t]/\langle t^{l(w)}\rangle$.
\item For $x<w$ let the numbers $a_i$, $b_i$, $i=1,\dots, r$ be
  defined by   
$$
\ol\CB(w)^x/\ol\CB(w)_{x}\cong\bigoplus_{i=1}^r K[t]/\langle
  t^{a_i}\rangle\{b_i\}.
$$
Then $r$ is the ungraded rank of $\CB(w)^x$
  and we have $a_i>l(x)$ for all $i=1,\dots, r$.
\end{enumerate}
\end{proposition}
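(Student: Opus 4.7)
The plan is to factor the base-changed inclusion $i_x\colon \ol\CB(w)_x \hookrightarrow \ol\CB(w)^x$ through $\ol\CB(w)_{[x]}$ and then read off both statements from a graded Smith normal form over the graded principal ideal domain $K[t]$. The key input is Lemma \ref{lemma-costalk}(2): writing $\alpha_1,\dots,\alpha_n$ for the labels of the $n = l(x)$ edges of $\hCG_{\le w}$ ending at $x$, we have $\CB(w)_x = \alpha_1 \cdots \alpha_n \cdot \CB(w)_{[x]}$ inside $\CB(w)^x$. Since $\ch K > N(w)$ forces $\tau(\alpha_i) = t$, base change together with a short diagram chase identifies the image of $i_x$ in $\ol\CB(w)^x$ with $t^{l(x)} N$, where $N \subset \ol\CB(w)^x$ denotes the image of the base-changed map $\bar k\colon \ol\CB(w)_{[x]} \to \ol\CB(w)^x$. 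The injectivity of $i_x$ from Lemma \ref{lemma-inj} propagates through this factorization (using that multiplication by $t^{l(x)}$ is injective on the free $K[t]$-module $\ol\CB(w)^x$) to make $\bar k$ itself injective, so $N$ is graded free of the same rank $r$ as $\ol\CB(w)^x$.

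Part (1) is then immediate. For $x = w$ the set $\CE^{\delta w}$ is empty, so $\CB(w)_{[w]} = \CB(w)^w \cong S$ by Theorem \ref{theorem-BMPsheaf}(1); hence $N = \ol\CB(w)^w = K[t]$ and the image of $i_w$ is $t^{l(w)}K[t]$, yielding the asserted cokernel $K[t]/\langle t^{l(w)} \rangle$.

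For part (2) the decisive additional ingredient is the containment $N \subset t\,\ol\CB(w)^x$. I would extract this from Theorem \ref{theorem-BMPsheaf}(3): via the identifications of Proposition \ref{prop-stgsec}, $\CB(w)_{[x]}$ is the kernel of the projective cover $\rho_{x,\delta x}\colon \CB(w)^x \to \SB(w)^{\delta x}$ in the category of graded $S$-modules. Graded Nakayama then forces $\CB(w)_{[x]} \subset \fm\,\CB(w)^x$, where $\fm = S_+$ is the graded augmentation ideal; and since $\tau$ sends every element of $\widehat\Pi$, hence every generator of $\fm$, into $tK[t]$, this transports to $N \subset t\,\ol\CB(w)^x$.

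To conclude, apply the graded Smith normal form to the pair $N \subset M := \ol\CB(w)^x$ of graded free $K[t]$-modules of equal rank $r$: there exist a graded basis $e_1, \dots, e_r$ of $M$ and integers $d_i \ge 0$ with $t^{d_i} e_i$ forming a graded basis of $N$. The containment $N \subset tM$ forces $d_i \ge 1$ for every $i$ (otherwise $e_i$ would lie in $tM$, contradicting its being a basis element of $M$). Since the image of $i_x$ is $t^{l(x)} N$ with graded basis $t^{l(x)+d_i} e_i$, the cokernel splits as $\bigoplus_{i=1}^r K[t]/\langle t^{l(x)+d_i} \rangle \{b_i\}$ for shifts $b_i$ determined by the degrees of the $e_i$, producing exactly $r$ summands with $a_i = l(x) + d_i > l(x)$. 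The step I expect to require the most care is the graded-Nakayama passage from ``$\rho_{x,\delta x}$ is a projective cover'' to the containment $\CB(w)_{[x]} \subset \fm\,\CB(w)^x$; once this is in hand, the base change to $K[t]$ and the Smith normal form computation are essentially mechanical.
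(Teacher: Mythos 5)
Your argument is correct and rests on the same two ingredients as the paper's: Lemma \ref{lemma-costalk}(2) to factor $i_x$ through $\CB(w)_{[x]}$, and the projective-cover property from Theorem \ref{theorem-BMPsheaf}(3) to rule out $a_i=l(x)$. The paper obtains the strict inequality by contradiction on a highest-degree generator $m'$ of $\CB(w)_{[x]}$, whereas you isolate the clean structural statement $\CB(w)_{[x]}\subset\fm\,\CB(w)^x$ via graded Nakayama and finish with the graded Smith normal form over $K[t]$ --- the same mechanism in a tidier wrapper.
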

\begin{proof}
 There are exactly $l(w)$ edges in $\hCG_{\le w}$ that
  contain the vertex $w$. Let $\alpha_1,\dots,\alpha_{l(w)}$ be their
  labels. Then we have $\CB(w)_w=\alpha_1\cdots\alpha_{l(w)}\CB(w)_{[w]}$
and, by definition, $\CB(w)_{[w]}=\CB(w)^w$. From this we deduce (1).

Suppose that $x<w$. From \ref{lemma-costalk}, (2) we deduce that the
minimal number of generators of $\ol\CB(w)^x/\ol\CB(w)_{x}$ is the
rank of $\CB(w)^x$. Since the number of edges of $\hCG_{\le w}$ that
end at $x$ is $l(x)$, we deduce from the same statement that there are
$\alpha_1,\dots,\alpha_{l(x)}$ such that
$\CB(w)_x=\alpha_1\cdots\alpha_{l(x)}\CB(w)_{[x]}$. So $a_i\ge
l(x)$ in any case.  

Now suppose that there are $i$ with $a_i=l(x)$. Among
those  let us choose $i$ such that  $b_i$ is maximal. Then there is an element
$m\in \CB(w)_x$ of degree $-b_i+l(x)$ that is not contained in the
subspace $\CB(w)^x_{\{< b_i\}}$ of $\CB(w)^x$ that is generated by all homogeneous components
of degree $<-b_i$. By Lemma \ref{lemma-costalk}, $m$ is of
the form $\alpha_1\cdots\alpha_{l(x)} m^\prime$ with 
$m^\prime\in\CB(w)_{[x]}$. Then $m^\prime$ is of degree $-b_i$ and is
also not contained in
$\CB(w)^x_{\{<-b_i\}}$. Hence there is a homogeneous $S$-basis of $\CB(w)^x$
that contains $m^\prime$. But $m^\prime$ lies in the kernel of the map
$\CB(w)^x\to\CB(w)^x/\CB(w)_{[x]}$, which, by construction, is a
projective cover. But then no basis element can be mapped to zero, hence we have
a contradiction. So there is no $i$ with $a_i=l(x)$.
\end{proof}

\subsection{The Lefschetz condition} \label{subsec-Lefcond}
Suppose that $A$ and $B$ are graded free $K[t]$-modules of finite
rank. Let  $f\colon A\to B$ be an injective homomorphism of
$K[t]$-modules that is an isomorphism over the ring $K[t,t^{-1}]$. So we assume that the cokernel of $f$ is a torsion $K[t]$-module, i.e.~ $t^N\coker f=0$ for $N\gg0$. Then $\coker f$ is isomorphic to a direct sum of $K[t]$-modules of the form 
$$
K[t]/\langle t^a\rangle \{b\}
$$
for some numbers $a\ge 1$ and $b\in \DZ$.

\begin{definition} We say that $f\colon A\to B$ {\em satisfies the Lefschetz condition with center $l\in \DZ$} if for each $n\ge 1$ the multiplication with $t^n$ induces an isomorphism 
$$
(\coker f)_{\{l-n\}}\stackrel{\sim}\to (\coker f)_{\{l+n\}}.
$$
\end{definition}
Our definition is motivated by 
 the Hard Lefschetz theorem in complex algebraic geometry. 
 Note that  $f\colon A\to B$ satisfies the Lefschetz condition with center $l$ if and only if 
$\coker f$ is isomorphic to a direct sum of $K[t]$-modules of the form 
$$
 K[t]/\langle t^a\rangle \{-l+a\}
$$
with $a\ge 1$.  Here is another conjecture.

\begin{conjecture}\label{conj-HL} Let $w\in \hCW$ and suppose that $\ch\, K>N(w)$. Then for all $x\le w$ the map
$$
i_x\colon\ol\CB(w)_x\to \ol\CB(w)^x
$$
satisfies the Lefschetz condition with center $l(w)$.
\end{conjecture}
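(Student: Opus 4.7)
The plan is to establish Conjecture \ref{conj-HL} (at least for characteristics exceeding some effective bound) by reducing to the characteristic zero case and then descending via base change on Bott--Samelson modules. Fix $w \in \hCW$ and a reduced expression $\bs = (s_1,\dots,s_l)$ for $w$. By Theorem \ref{theorem-BSspec}, $\CB(w)$ appears as a graded direct summand of $\CB(\bs)$, and the latter admits a natural integral model because translation functors $\vartheta_s$ are defined over $\DZ$; consequently the graded ranks of stalks and costalks of $\CB(\bs)$ do not depend on the base field (Lemma \ref{lemma-rankind}). Thus, once the Lefschetz condition is established for $\CB(\bs)$, it should pass to the summand $\CB(w)$, provided the idempotent realizing the decomposition interacts well with the $K[t]$-structure on the cokernel.

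In characteristic zero I would follow the geometric approach of \cite{SoeGAFA}. Via the known equivalence relating $\CB(w)$ (or the corresponding special bimodule) to the equivariant intersection cohomology of the Schubert variety $\ol{\SX_w}$ in an affine (partial) flag variety, the inclusion $i_x \colon \CB(w)_x \to \CB(w)^x$ identifies with the restriction map from an equivariant costalk to an equivariant stalk of $\IC(\ol{\SX_w})$ at the $T$-fixed point indexed by $x$, while the specialization $\tau \colon S \to K[t]$ corresponds to the first Chern class of an ample line bundle on a projective embedding. The classical Hard Lefschetz theorem for intersection cohomology of projective varieties, combined with equivariant localization, then produces exactly the required symmetric decomposition of $\coker(i_x)$ centered at $l(w)$, establishing the conjecture over $\DQ$.

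To transport the statement to positive characteristic, I would exploit that the decomposition of $\CB_\DQ(\bs)$ into indecomposables is realized by idempotents in the finite-rank graded algebra $\End_{\CZ_\DQ}(\CB_\DQ(\bs))$, and that the $\DQ$-version of the Lefschetz decomposition of $\coker(i_x)$ is described by a finite collection of matrices with entries in $\DQ$. For any field $K$ whose characteristic avoids the finitely many primes appearing in the denominators of these idempotents and of the explicit Lefschetz isomorphism, the entire structure descends to $\CB_K(\bs)$, and restricting to the $\CB_K(w)$-summand yields the Lefschetz condition for $i_x \colon \ol\CB(w)_x \to \ol\CB(w)^x$. The principal obstacle, and the whole reason the main theorem's bound is so crude, is the effective control of those exceptional primes: one must bound the denominators arising when lifting through the inductive projective-cover construction of Theorem \ref{theorem-BMPsheaf}, which produces the explicit but enormous number $U(\hw_0)$ featuring in the main theorem.
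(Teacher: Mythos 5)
Your characteristic-zero reduction mirrors the paper's Theorem \ref{theorem-charzero} almost exactly: identify $\CB(w)$ with equivariant $\IH$ of the Schubert variety, identify $i_x$ with the costalk-to-stalk adjunction, reduce to a one-parameter torus via $\tau$, and invoke Hard Lefschetz for the projective variety Soergel produces. That part is fine.

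The positive-characteristic descent, however, has a genuine gap. You write that once the Lefschetz condition is established for $\CB(\bs)$ it should pass to the summand $\CB(w)$, but the Lefschetz condition with a \emph{single} center is not even meaningfully posed for $\CB(\bs)$: by Theorem \ref{theorem-BSspec} the cokernel of $\ol\CB(\bs)_x\to\ol\CB(\bs)^x$ is a direct sum of cokernels coming from shifted summands $\CB(x_i)\{l_i\}$, each with its own center, so the total is never a single symmetric package. What the paper actually compares is the full \emph{Lefschetz datum} (the multiset of $(a,b)$ in a decomposition $\bigoplus K[t]/\langle t^a\rangle\{b\}$, Lemma \ref{lemma-lefconlefdat}), not a Lefschetz condition; and then Proposition \ref{prop-lefcoin} feeds this into an induction on $w$ --- one must already know Conjecture \ref{conj-HL} over $K$ for all $w'<w$ to peel off the smaller summands and isolate the $\CB_K(w)$-contribution. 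Your idempotent language hides exactly this induction. Two more concrete issues: (i) the integral form of the Bott--Samelson module is not obtained by ``translation functors over $\DZ$''; the $\CZ_T$-module structure on $\CX_T(\bs)$ requires $2$ invertible (Proposition \ref{prop-BSX}), and the paper instead builds $\CX_\DZ(\bs)$ directly as a generically graded $S_\DZ$-module with a distinguished basis $E_\DZ(\bs)$; (ii) the bound $U(\bs)$ has nothing to do with denominators in the projective-cover algorithm of Theorem \ref{theorem-BMPsheaf} --- it comes from estimating the size of the minors of the integral matrix $X=(E^{-1})^T P E^{-1}$ expressing the inclusion $\CX_K(\bs)_x\to\CX_K(\bs)^x$ in the distinguished basis (Lemmas \ref{lemma-estxij}, \ref{lemma-minors}). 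Controlling denominators of idempotents in $\End_{\CZ_\DQ}(\CB_\DQ(\bs))$ is a plausible alternative route but not the one that produces the explicit formula, and without the $\CX_\DZ(\bs)$ construction it is unclear how you would make it effective.
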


 \begin{theorem} For each $w\in \hCW$ and each field $K$ with $\ch\, K>N(w)$  the  Conjecture $\ref{conj-HL}$ implies Conjecture $\ref{conj-KL}$.
\end{theorem}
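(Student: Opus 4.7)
The plan is to leverage the characterization of Conjecture \ref{conj-KL} given in Proposition \ref{prop-sme}(1): for each $x<w$ in $\hCW$, one must show that the graded $S$-module $\CB(w)^x$ is generated in degrees $<l(w)-l(x)$. The hypothesis $\ch K>N(w)$ ensures $(K,w)$ is a GKM-pair, so Corollary \ref{cor-specmod}, Proposition \ref{prop-lef}, and Lemma \ref{lemma-inj} are all at our disposal. Moreover, since the base change $\tau\colon S\to K[t]$ is a map of graded algebras, $\ol\CB(w)^x=\CB(w)^x\otimes_S K[t]$ is a graded free $K[t]$-module whose generator degrees agree with those of $\CB(w)^x$. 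So it suffices to verify the degree bound for $\ol\CB(w)^x$.

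The main step is to extract these degrees from the cokernel of $i_x$. By Lemma \ref{lemma-inj}, $i_x\colon\ol\CB(w)_x\hookrightarrow\ol\CB(w)^x$ is an injection of graded free $K[t]$-modules whose cokernel is $t$-torsion. Since $K[t]$ is a graded PID, I would apply the graded Smith normal form to find homogeneous bases with respect to which $i_x$ is diagonal with entries $t^{a_1},\dots,t^{a_r}$, giving a decomposition
\begin{equation*}
\coker(i_x)\cong\bigoplus_{i=1}^r K[t]/\langle t^{a_i}\rangle\{b_i\},
\end{equation*}
such that the chosen basis of $\ol\CB(w)^x$ consists of homogeneous elements of degrees $-b_1,\dots,-b_r$. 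Proposition \ref{prop-lef}(2) then supplies the crucial inequalities $r=\rk\CB(w)^x$ and $a_i>l(x)$ for every $i$.

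Finally, I would invoke Conjecture \ref{conj-HL}: the map $i_x$ satisfies the Lefschetz condition with center $l(w)$. By the characterization in Section \ref{subsec-Lefcond}, each summand of $\coker(i_x)$ is forced to take the symmetric form $K[t]/\langle t^{a_i}\rangle\{-l(w)+a_i\}$, so $b_i=a_i-l(w)$. Combining with $a_i>l(x)$ gives
\begin{equation*}
-b_i \;=\; l(w)-a_i \;<\; l(w)-l(x)
\end{equation*}
for every $i$. Since the degrees of the generators of $\CB(w)^x$ are precisely the numbers $-b_i$, this establishes condition (1) of Proposition \ref{prop-sme}, and therefore Conjecture \ref{conj-KL} for $w$.

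There is no substantial obstacle here beyond bookkeeping: the entire argument is a direct deduction once the graded Smith normal form is set up. The only point requiring care is matching the shifts coming from Smith normal form with the shifts in Proposition \ref{prop-lef}(2) and the shifts prescribed by the Lefschetz condition, so that the inequality $a_i>l(x)$ translates cleanly into the desired upper bound on the generator degrees of $\CB(w)^x$.
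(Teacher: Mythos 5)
Your argument is correct and follows the same route as the paper: combine Proposition \ref{prop-lef}(2) (which forces $a_i>l(x)$) with the Lefschetz condition (which forces $b_i=-l(w)+a_i$) to bound the generator degrees of $\ol\CB(w)^x$, then appeal to Proposition \ref{prop-sme}(1). The only difference is that you spell out the graded Smith normal form and the base-change degree matching explicitly, which the paper treats as implicit bookkeeping.
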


\begin{proof}  Suppose that
Conjecture \ref{conj-HL} holds for the pair $(K,w)$ and let $x<w$. Then
$\CB(w)^x/\CB(w)_x$ is a direct sum of $K[t]$-modules of the form
$K[t]/\langle t^a\rangle\{-l(w)+a\}$ for various $a>0$. 
By Proposition
\ref{prop-lef} only terms  with $a>l(x)$ occur. Now
$K[t]/\langle t^a\rangle\{-l(w)+a\}$ is generated in degree
$l(w)-a$, which is strictly smaller than $l(w)-l(x)$, and we deduce that $\CB(w)^x$ is generated in
degrees $<l(w)-l(x)$. This statement is equivalent  to Conjecture
\ref{conj-KL} by Proposition \ref{prop-sme}.
\end{proof}

\subsection{The characteristic zero case}

In this section we review the topological approach of
\cite{SoeGAFA} that yields Conjecture \ref{conj-HL} in the case that
$\ch K=0$. 
\begin{theorem} \label{theorem-charzero} Suppose that $K$ is a field
  of characteristic zero. Then Conjecture $\ref{conj-HL}$ holds for all $w\in\hCW$. 
\end{theorem}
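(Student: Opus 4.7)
The plan is to follow the strategy of \cite{SoeGAFA}: interpret the modules $\CB(w)$ as equivariant intersection cohomologies of affine Schubert varieties $X_w\subset\hFl$ over $\DC$, then invoke the classical Hard Lefschetz theorem. First I would reduce to the case $K=\DC$. By Theorem \ref{theorem-BSspec} together with Lemma \ref{lemma-rankind}, the graded ranks of $\CB(w)^x$ and $\CB(w)_x$ in characteristic zero are determined purely by Kazhdan--Lusztig combinatorics, so the Lefschetz condition is insensitive to field extensions within characteristic zero. Then I would use the Braden--MacPherson identification (\cite{BMP}) of $\SB(w)$ with the equivariant intersection cohomology sheaf of the affine Schubert variety $X_w$, which on global sections yields $\CB(w)\cong \IH^*_T(X_w,\DC)$. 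Under this identification the stalk $\CB(w)^x$ is the $T$-equivariant cohomology at the torus-fixed point $x$, and the costalk $\CB(w)_x$ is the $T$-equivariant cohomology with support at that point, via the localization theorem.

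Second, the $S$-algebra homomorphism $\tau$ with $\tau(\alpha)=t$ for every simple affine root $\alpha$ corresponds geometrically to restricting the $T$-equivariance to a sufficiently generic one-parameter subgroup $\DG_m\subset T$, with $t$ playing the role of the equivariant parameter. Under this restriction $\ol\CB(w)^x$ and $\ol\CB(w)_x$ become the $\DG_m$-equivariant cohomology and the $\DG_m$-equivariant cohomology with support of a neighborhood of $x$ in $X_w$, while the map $i_x$ becomes the natural forget-supports map. The freeness and injectivity assertions of Lemma \ref{lemma-inj} are then automatic from the formality of torus-equivariant intersection cohomology in the complex analytic setting.

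Third, the Lefschetz condition with center $l(w)$ from Section \ref{subsec-Lefcond} is precisely the relative Hard Lefschetz statement for the action on $\coker(i_x)$ of the Chern class of an ample line bundle on $X_w$. Choosing an ample class whose image under equivariant localization matches the parameter $t$, the Hard Lefschetz theorem of Beilinson--Bernstein--Deligne--Gabber applied to the IC sheaf of the projective variety $X_w$ yields the desired isomorphisms globally. The main obstacle is to pass from the \emph{global} Hard Lefschetz on $\IH^*(X_w)$ to the \emph{local} statement at each fixed point $x\le w$. This I would handle by downward induction on the Bruhat order combined with the decomposition theorem applied to the closed inclusions $X_x\hookrightarrow X_w$, so that $\coker(i_x)$ is identified, up to the appropriate degree shift involving $l(w)$ and $l(x)$, with the primitive part of the local intersection cohomology at $x$, where Hard Lefschetz then applies directly. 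This reduction closely parallels Soergel's treatment in the equivalent category of special bimodules.
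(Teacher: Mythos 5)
Your overall strategy matches the paper's: geometric realization of $\CB(w)$ as $T$-equivariant intersection cohomology of the Schubert variety, reduction to a one-parameter subgroup, and then Hard Lefschetz. Steps one and two of your proposal are essentially what the paper (following \cite{SoeGAFA}) does, and your preliminary reduction to $K=\DC$ via Lemma \ref{lemma-rankind} is a fine extra remark. However, there is a real gap at what you yourself call the ``main obstacle.''

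The heart of the matter is to identify $\coker\bigl(i_x\colon\ol\CB(w)_x\to\ol\CB(w)^x\bigr)$ with the (ordinary) intersection cohomology of \emph{some} projective variety, together with an ample class matching the $t$-action, so that classical Hard Lefschetz applies. Soergel's argument does exactly this: he produces a specific projective variety $\ol Z$ --- roughly, a weighted projective compactification of a transverse slice to the $\DG_m$-fixed point $x$ inside $\ol{BwB/B}$ --- and shows that the cokernel of the adjunction map is $\IH^\bullet(\ol Z)$ with $t$ acting as the Chern class of an ample bundle on $\ol Z$. In your write-up you instead propose to deduce the local Lefschetz statement by ``downward induction on the Bruhat order combined with the decomposition theorem applied to $X_x\hookrightarrow X_w$,'' identifying $\coker(i_x)$ with a ``primitive part.'' This is not a worked-out argument: the decomposition theorem controls the pushforward $Rf_*\IC$ along projective maps and gives relative Hard Lefschetz on \emph{perverse} cohomology, but it does not by itself yield a Lefschetz structure on stalk-minus-costalk cohomology of $\IC(X_w)$ at a fixed point. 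The passage from global Hard Lefschetz on $X_w$ (or $X_x$) to the local statement is precisely where Soergel needs the auxiliary variety $\ol Z$; you cannot sidestep that construction by induction. In addition, the ample class in question lives on $\ol Z$, not on $X_w$, so your phrasing about ``the Chern class of an ample line bundle on $X_w$'' points at the wrong space. Finally, a small but relevant point: the paper does not use an arbitrary ``sufficiently generic'' one-parameter subgroup but the specific coweight $\rho^\vee$, whose positivity is what makes the attracting-cell geometry (and hence the identification of the cokernel) work.
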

\begin{proof}[Sketch of the proof] The proof is essentially contained in the last
  section of \cite{SoeGAFA}. However,  Soergel works in the category
  of {\em special $S$-bimodules}. But in \cite{FieTAMS} it is proven that our category of
  $\CZ$-modules is equivalent to Soergel's category of
  bimodules. Moreover, in \cite{SoeGAFA} the finite dimensional
  parabolic situation is considered, while we need to work in the
  infinite dimensional Kac-Moody setting. Here the Schubert varieties
  are still finite dimensional, so there are no further technical
  difficulties to overcome. For the Kac-Moody versions of the
  well-known results about Schubert varieties see \cite{Kumar}.

  Here are Soergel's arguments: Let $G$ be the complex Kac-Moody group associated to $R$, $B\subset G$ the
  Borel subgroup associated to $R^+$ and $T\subset B$ the Kac-Moody
  torus. The first step in the proof is to identify $\CZ(\{\le w\})$ with the
  $T$-equivariant cohomology and $\CB(w)$ with the $T$-equivariant
  intersection cohomology (both with coefficients in $K$) of the
  Schubert variety $\ol{BwB/B}$ 
  associated to $w\in\hCW$ (the latter even as a module over the 
  cohomology). The map $i_x\colon\CB(w)_x\to\CB(w)^x$ then identifies
  with the adjunction map between the equivariant local
  hypercohomologies of the costalk and the stalk of the intersection
  cohomology complex on $\ol{BwB/B}$. The one-dimensional version
  $\ol\CB(w)_x\to\ol\CB(w)^x$ is then  obtained by reducing to the
  action of a one-dimensional subtorus of $T$ that corresponds to the
  coweight $\rho^\vee$. 

  Soergel then identifies the cokernel of $\ol\CB(w)_x\to\ol\CB(w)^x$ with
  the (ordinary) intersection cohomology of a certain
  projective variety $\ol Z$ in such a way
  that the multiplication with $t$ becomes a Lefschetz operator. The
  claim of the theorem hence is translated to complex algebraic
  geometry and equals the Hard Lefschetz theorem in this specific
  situation.
\end{proof}



\section{From characteristic zero to positive characteristic}\label{section-chzerochpos}

The main application of the Lefschetz theory developed in this article
is a calculation of an upper bound for the exceptional characteristics
for  Lusztig's conjecture. For this we use a base change argument. So  we have to develop a version of the Lefschetz theory
over the integers. It turns out that the Bott--Samelson modules can
indeed  be defined over $\DZ$ (cf.~Section \ref{sec-BSint}). Before we
introduce this construction, we associate to each Bott--Samelson
module its {\em Lefschetz data} and prove Proposition
\ref{prop-lefcoin} which serves us as a bridge from characteristic zero to positive characteristics.

\subsection{Lefschetz data}

Suppose that $f\colon A\to B$ is as in Section \ref{subsec-Lefcond}.

\begin{definition} Suppose that $(a_1,b_1)$, \dots, $(a_n,b_n)\in \DZ_{>0}\times \DZ$ are such that 
$$
\coker f \cong \bigoplus_{i=1}^n K[t]/\langle t^{a_i}\rangle\{b_i\}.
$$
We call the multiset $((a_1,b_1),\dots, (a_n,b_n))$ the {\em Lefschetz
  datum} for $f\colon A\to B$. 
\end{definition}

The following is immediate:
\begin{lemma}\label{lemma-lefconlefdat}
The map $f\colon A\to B$ satisfies the Lefschetz condition with center
$l$ if and only if its Lefschetz datum only involves pairs of the
form $(a,-l+a)$ for various $a>0$.
\end{lemma}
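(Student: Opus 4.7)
The plan is to reduce the proof to a pointwise computation on each summand of $\coker f$. Both the Lefschetz condition, which is a statement about multiplication by powers of $t$ between specific graded pieces, and the decomposition $\coker f \cong \bigoplus_i K[t]/\langle t^{a_i}\rangle\{b_i\}$ are compatible, in the sense that multiplication by $t^n$ acts diagonally on the direct sum and preserves the grading. Therefore $f$ satisfies the Lefschetz condition with center $l$ if and only if each individual summand $K[t]/\langle t^{a_i}\rangle\{b_i\}$ does. This reduces the lemma to verifying the following single-summand claim: the cyclic $K[t]$-module $K[t]/\langle t^a\rangle\{b\}$ satisfies the Lefschetz condition with center $l$ if and only if $b = -l + a$.

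For the single-summand claim, I would analyze when multiplication by $t^n$ from degree $l-n$ to degree $l+n$ is an isomorphism. Using the explicit basis $\{1, t, t^2, \ldots, t^{a-1}\}$ of $K[t]/\langle t^a\rangle$, I would first identify the set of degrees in which $K[t]/\langle t^a\rangle\{b\}$ is nonzero: an arithmetic progression of length $a$ and step $2$ determined by $a$ and $b$. On such a module, multiplication by $t^n$ is automatically injective on any nonzero homogeneous element in the domain -- it sends the class of $t^j$ to the class of $t^{j+n}$, which vanishes precisely when $j+n\geq a$, in which case the target degree is outside the nonzero range in any case. Consequently, the isomorphism requirement reduces to the purely numerical statement that the source is nonzero if and only if the target is.

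The Lefschetz condition with center $l$ thus translates into the condition that the set of nonzero degrees of $K[t]/\langle t^a\rangle\{b\}$ be stable under the reflection $d\mapsto 2l - d$ about $l$. Since this set is an arithmetic progression of length $a$ with step $2$, imposing this symmetry pins down the shift $b$ uniquely in terms of $a$ and $l$, and a direct arithmetic check matches this with the claimed form $b = -l + a$. The converse is immediate from the same computation: once $b$ takes this value, the nonzero degrees form a palindrome about $l$ and each $t^n$ implements the required isomorphism. I expect no real obstacle here -- the proof is essentially grading bookkeeping -- with the only care needed in tracking the shift convention $M\{l\}_{\{n\}} = M_{\{l+n\}}$ and the grading $\deg t = 2$.
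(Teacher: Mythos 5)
Your overall strategy is the right one and is surely what the paper has in mind when it calls the lemma ``immediate'': reduce to a single summand $K[t]/\langle t^a\rangle\{b\}$ of $\coker f$, observe that on such a summand multiplication by $t^n$ between two nonzero one-dimensional graded pieces is automatically nonzero (hence an isomorphism), and thereby translate the Lefschetz condition into the statement that the support of the summand is invariant under $d\mapsto 2l-d$.

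But the final arithmetic does not come out as you assert. With $\deg t=2$ and $M\{b\}_{\{n\}}=M_{\{n+b\}}$, the nonzero degrees of $K[t]/\langle t^a\rangle\{b\}$ are $\{-b,\,-b+2,\,\dots,\,-b+2a-2\}$, an arithmetic progression whose midpoint is $a-1-b$. Requiring this set to be stable under $d\mapsto 2l-d$ forces $a-1-b=l$, that is $b=a-1-l$, \emph{not} $b=-l+a$. So the ``direct arithmetic check'' you invoke does not match the claimed form; it is off by one. You should have flagged this rather than asserting agreement.

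The discrepancy you inherited is in fact an internal off-by-one in the paper itself: the Definition of the Lefschetz condition (symmetry about $l$) is inconsistent with the form $(a,-l+a)$ appearing in the Lemma, in the Note that precedes it, and throughout Section~4. This is visible already from Proposition~\ref{prop-lef}(1): the cokernel $\ol\CB(w)^w/\ol\CB(w)_w\cong K[t]/\langle t^{l(w)}\rangle$ has nonzero degrees $0,2,\dots,2l(w)-2$, symmetric about $l(w)-1$; under the definition as stated, $t^{l(w)}$ gives the zero map $K\to 0$ from degree $0$ to degree $2l(w)$, so Conjecture~\ref{conj-HL} would already fail at $x=w$. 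The intended reading, consistent both with the subsequent use in the paper and with the Hard Lefschetz theorem for a projective variety of complex dimension $l(w)-1$, is that ``center $l$'' should mean $t^n\colon(\coker f)_{\{l-1-n\}}\xrightarrow{\sim}(\coker f)_{\{l-1+n\}}$ for $n\ge 1$. With that correction your argument goes through verbatim and gives exactly $b=-l+a$; without it, your computation gives $b=a-1-l$ and you should say so.
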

\subsection{The Lefschetz datum of a Bott--Samelson module}

It is from now on convenient to indicate the base
field $K$ in the notation of the relevant objects. So we 
denote by $S_K$ 
the symmetric algebra over the $K$-vector space $\hX\otimes_\DZ
K$, by $\CZ_K$ the structure algebra
over $K$, by $\CB_K(\bs)$ or $\CB_K(w)$  our special modules over
$\CZ_K$, by $\CB_K(\bs)^x$ or $\CB_K(w)^x$ the
corresponding stalks and so on. 

Let us fix an element $w\in\hCW$ and a sequence
$\bs=(s_1,\dots,s_j)$ such that $w=s_1\cdots s_l$ is a reduced expression. Suppose that the field $K$ is such
that $\ch\, K>N(w)=N(\bs)$. In particular, $(K,\bs)$ and $(K,w)$ are GKM-pairs. By Theorem \ref{theorem-BSspec}, the
Bott--Samelson module $\CB_K(\bs)$ decomposes into a direct sum of
modules of the form $\CB_K(x)$ up to degree shifts. We deduce from Lemma
\ref{lemma-inj} that for each $x\le w$ the map $\ol{\CB}_K(\bs)_x\to \ol{\CB}_K(\bs)^x$ is injective and that its cokernel is a torsion module.  We
call the Lefschetz datum of the map $\ol\CB_K(\bs)_x\to\ol\CB_K(\bs)^x$ the
{\em Lefschetz datum of $\CB_K(\bs)$ at $x$}.  The following result
shows how one can read off the decomposition pattern of $\CB_K(\bs)$
from its Lefschetz data.

\begin{lemma}\label{lemma-lefdata} Suppose that
$\CB_K(\bs)=\CB_K(w)\oplus\bigoplus_{i=1}^r\CB_K(x_i)\{l_i\}$. Choose $x<w$ and
let $((a_1,b_1),\dots,(a_n,b_n))$ be the Lefschetz datum of $\CB_K(\bs)$
at $x$. Then the number of $i$ such that $x_i=x$ equals the number of
$j$ such that $a_j=l(x)$.
\end{lemma}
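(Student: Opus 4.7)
The plan is to read off the Lefschetz datum at $x$ from the given direct sum decomposition of $\CB_K(\bs)$ and then identify which summands are responsible for the pairs $(a_j, b_j)$ with $a_j = l(x)$.

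First I would observe that the formation of stalks and costalks is functorial in $\CZ$-modules and commutes with direct sums and with grading shifts. Hence the given decomposition
$$
\CB_K(\bs) \cong \CB_K(w) \oplus \bigoplus_{i=1}^r \CB_K(x_i)\{l_i\}
$$
induces, after extension of scalars to $K[t]$ and passage to the quotient $\ol\CB^x/\ol\CB_x$, a decomposition
$$
\ol\CB_K(\bs)^x / \ol\CB_K(\bs)_x \;\cong\; \bigl(\ol\CB_K(w)^x/\ol\CB_K(w)_x\bigr) \oplus \bigoplus_{i:\, x_i\ge x} \bigl(\ol\CB_K(x_i)^x/\ol\CB_K(x_i)_x\bigr)\{l_i\}
$$
(summands with $x_i\not\ge x$ contribute zero since the stalk at $x$ vanishes). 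Here I use that $(K,\bs)$ is a GKM-pair so each $(K,x_i)$ and $(K,w)$ is a GKM-pair and Lemma \ref{lemma-inj} applies.

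Next I would apply Proposition \ref{prop-lef} to each summand. For the summand $\CB_K(w)$, since $x<w$, part (2) gives a decomposition all of whose exponents $a_j$ satisfy $a_j>l(x)$. For a summand $\CB_K(x_i)\{l_i\}$ with $x<x_i$, part (2) again yields only exponents strictly greater than $l(x)$. The only remaining case is $x_i=x$: by part (1) of the proposition, $\ol\CB_K(x_i)^{x_i}/\ol\CB_K(x_i)_{x_i}\cong K[t]/\langle t^{l(x_i)}\rangle=K[t]/\langle t^{l(x)}\rangle$, so after the shift $\{l_i\}$ this summand contributes exactly one pair $(l(x), l_i)$ to the Lefschetz datum at $x$.

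Combining these observations, the exponents $a_j$ in the Lefschetz datum of $\CB_K(\bs)$ at $x$ that are equal to $l(x)$ arise only from summands $\CB_K(x_i)\{l_i\}$ with $x_i=x$, and each such summand contributes exactly one. Thus the number of $j$ with $a_j=l(x)$ coincides with the number of $i$ with $x_i=x$, as desired. The only mildly subtle point is ensuring the strict inequality $a_j>l(x)$ for the summands with $x<x_i$ (and for $\CB_K(w)$), which is exactly the content of Proposition \ref{prop-lef}(2); the remainder is bookkeeping.
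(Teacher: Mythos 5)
Your proof is correct and follows essentially the same route as the paper: decompose the cokernel $\ol\CB_K(\bs)^x/\ol\CB_K(\bs)_x$ along the given direct sum, then apply Proposition \ref{prop-lef} to each summand, noting that summands with $y>x$ contribute only exponents $a>l(x)$ while the summand with $y=x$ contributes exactly one exponent $a=l(x)$. The paper's proof is a one-sentence compression of the very same observation; you have merely spelled out the bookkeeping explicitly.
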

\begin{proof}  By Proposition \ref{prop-lef} the Lefschetz datum of
  $\ol\CB_K(y)_x\to \ol\CB_K(y)^x$ involves only pairs $(a,b)$ with
  $a>l(x)$, unless $x=y$, in which case the cokernel of
  $\ol\CB_K(y)_x\to \ol\CB_K(y)^x$ is isomorphic to $K[t]/\langle
  t^{l(x)}\rangle$.
\end{proof}

\begin{proposition}\label{prop-lefcoin} Suppose that the Conjecture
  $\ref{conj-HL}$ holds for $\CB_K(w^\prime)$ for all $w^\prime<w$ and that the Lefschetz
  data of  $\CB_K(\bs)$ and $\CB_\DQ(\bs)$ coincide at all
  $x\le w$. Then Conjecture $\ref{conj-HL}$ holds for $\CB_K(w)$. 
\end{proposition}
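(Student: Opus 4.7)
The plan is to extract $\CB_K(w)$ as a summand of $\CB_K(\bs)$ via Theorem \ref{theorem-BSspec}, to match its Lefschetz data with those of $\CB_\DQ(w)$, and then to conclude by Theorem \ref{theorem-charzero}. Write $L(M,x)$ for the Lefschetz datum of $i_x\colon\ol{M}_x\to\ol{M}^x$. First I would show that $L(\CB_K(w^\prime),x)=L(\CB_\DQ(w^\prime),x)$ for every $w^\prime<w$ and every $x\le w^\prime$: by the induction hypothesis and the implication ``Conjecture \ref{conj-HL} $\Rightarrow$ Conjecture \ref{conj-KL}'' established above, one has $h(\SB_K(w^\prime))=\ul{H}_{w^\prime}=h(\SB_\DQ(w^\prime))$, and Corollary \ref{cor-specmod} then shows that the graded ranks of the stalk and costalk of $\CB(w^\prime)$ at $x$ are independent of the base field. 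Since the Lefschetz condition forces $L(\CB(w^\prime),x)$ to consist only of pairs of the form $(a,-l(w^\prime)+a)$, its multiset of entries is determined by those graded ranks, and the two Lefschetz data agree.

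Next I would use Theorem \ref{theorem-BSspec} to decompose
$$
\CB_K(\bs)\cong\CB_K(w)\oplus\bigoplus_i\CB_K(x_i)\{l_i\},\qquad \CB_\DQ(\bs)\cong\CB_\DQ(w)\oplus\bigoplus_j\CB_\DQ(y_j)\{m_j\},
$$
with $x_i,y_j<w$, and then match these two decompositions. Refining Lemma \ref{lemma-lefdata}, I claim that for $x<w$ every summand $\CB(y)\{m\}$ with $y\ne x$ (including the main summand $\CB(w)$) contributes only pairs $(a,b)$ with $a>l(x)$ to $L(\CB(\bs),x)$, by Proposition \ref{prop-lef}(2), whereas a summand $\CB(x)\{m\}$ contributes exactly the single pair $(l(x),m)$, by Proposition \ref{prop-lef}(1). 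Consequently the pairs of the form $(l(x),\cdot)$ appearing in $L(\CB(\bs),x)$ are in natural bijection, with multiplicities, with the summands $\CB(x)\{\cdot\}$ of the decomposition. Since $L(\CB_K(\bs),x)=L(\CB_\DQ(\bs),x)$ by hypothesis for each such $x$, the multisets $\{(x_i,l_i)\}_i$ and $\{(y_j,m_j)\}_j$ agree.

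Because $i_x$ respects direct sums and shifting a module by $\{m\}$ translates the second coordinate of each pair by $m$, we then have
$$
L(\CB_K(\bs),x)=L(\CB_K(w),x)\;\sqcup\;\bigsqcup_i\bigl(L(\CB_K(x_i),x)+l_i\bigr),
$$
and analogously over $\DQ$. By the first two steps the contributions from the smaller summands on the right coincide over $K$ and $\DQ$, and the left-hand sides coincide by hypothesis; subtracting yields $L(\CB_K(w),x)=L(\CB_\DQ(w),x)$ for every $x\le w$ (the case $x=w$ is trivial, both sides equalling $\{(l(w),0)\}$). Theorem \ref{theorem-charzero} provides the Lefschetz condition with centre $l(w)$ on the right, so it also holds on the left, proving Conjecture \ref{conj-HL} for $\CB_K(w)$. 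The main delicacy is the middle step: one must read off from the Lefschetz data not only the multiset of bottom elements $x_i$ (as in Lemma \ref{lemma-lefdata}) but also the accompanying shifts $l_i$, which are precisely the second coordinates of the pairs $(l(x_i),l_i)$ pinned down by Proposition \ref{prop-lef}(1).
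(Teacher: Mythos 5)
Your argument follows the paper's proof exactly: decompose the Bott--Samelson module over $K$ and over $\DQ$, match the decompositions (and hence the Lefschetz data of the summands $\CB(w')$, $w'<w$) using the inductive hypothesis, and subtract to conclude that $L(\CB_K(w),x)=L(\CB_\DQ(w),x)$, then invoke Theorem \ref{theorem-charzero}. The only difference is that you are more careful than the paper's published proof about two points it leaves implicit: that the shifts $l_i$ (not merely the multiset of $x_i$'s handled by Lemma \ref{lemma-lefdata}) can be read off the Lefschetz data via Proposition \ref{prop-lef}(1), and that the equality $L(\CB_K(w'),x)=L(\CB_\DQ(w'),x)$ requires not just the Lefschetz condition on both sides but also the field-independence of the graded ranks, which you correctly supply via the implication Conjecture \ref{conj-HL} $\Rightarrow$ Conjecture \ref{conj-KL} and Corollary \ref{cor-specmod}.
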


\begin{proof} We know from Theorem \ref{theorem-charzero} that
  Conjecture \ref{conj-HL} holds for $\CB_\DQ(w)$, i.e.~that the map
  $\ol\CB_\DQ(w)_x\to\CB_\DQ(w)^x$ satisfies the Lefschetz condition
  with center $l(w)$. In view of Lemma \ref{lemma-lefconlefdat} it
  suffices for the proof of our claim  to show that the
  Lefschetz data of $\CB_K(w)$ and $\CB_\DQ(w)$ coincide at all
  $x<w$. 

  Lemma \ref{lemma-lefdata} together with our second assumption implies that the
  decomposition of $\CB_\DQ(\bs)$ into a direct sum of shifted copies
  of $\CB_\DQ(y)$ for various $y\le w$ parallels the analogous decomposition of
  $\CB_K(\bs)$. By our first assumption and Lemma \ref{lemma-lefconlefdat} the Lefschetz data of
  $\CB_\DQ(w^\prime)$ at
  all $x<w^\prime$ equals the corresponding Lefschetz data of
  $\CB_K(w^\prime)$ for all $w^\prime<w$. Since $\CB_K(w)$ occurs as a direct summand in
  $\CB_K(\bs)$ with multiplicity one and else only shifted $\CB_K(w^\prime)$
  with $w^\prime<w$ occur, the Lefschetz data of $\CB_K(w)$ and $\CB_\DQ(w)$
  coincide, which is what we wanted to show.
\end{proof}

\subsection{The main theorem}\label{subsec-maintheo}

We will now formulate the main result of this article, which gives an
upper bound
on the exceptional characteristics for the Conjecture \ref{conj-KL}. In order to state our result,  we need to
introduce the following numbers.

Let $\bs=(s_1,\dots,s_{l})$ be a sequence in $\hCS$.  Let us
define the corresponding Bott--Samelson element $\ul{H}(\bs)$ in the
affine Hecke algebra by 
$$
\ul{H}({\bs}):=\ul{H}_{s_1}\cdots\ul{H}_{s_l}.
$$
Then there are $x_1,\dots,x_n\in \hCW$ and $l_1,\dots,l_n\in\DZ$ with
$$
\ul{H}({\bs})=\sum_{i=1}^n v^{l_i}\ul{H}_{x_i}.
$$
 Let the
polynomials $a_x\in\DZ[v]$ be defined by 
$$
\ul{H}({\bs})=\sum_{x\in \hCW} a_x H_x.
$$
Set $r_x=a_x(1)$ and set
$$
r=r(\bs):=\max_{x}\{r_x\}.
$$
Let $d_x=\left(\frac{d}{dv}a_x\right)(1)$ be the sum of the exponents
of $a_x$, and set
$$
d=d(\bs):=\max_{x}\{d_x\}.
$$
Now we associate to $\bs$ the number
$U(\bs):=r!(r!(r-1)!N(\bs)^{l+2d})^r$ ($N(\bs)$ was defined in
Section \ref{sec-Leftheo}). We set 
$$
U(w):=\min_{\bs} U(\bs),
$$
where the minimum is taken over all reduced expressions for $w$. Note
that $U(w^\prime)\le U(w)$ for $w^\prime \le w$. 
Here is our result:
\begin{theorem} \label{theorem-bound} Let $w\in\hCW$ and suppose that $\ch
  K>U(w)$. Then the Conjectures $\ref{conj-KL}$
  and $\ref{conj-HL}$ hold for $\CB_K(w)$.
\end{theorem}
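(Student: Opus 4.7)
The plan is to argue by induction on the Bruhat order on $\hCW$, reducing at each step to Proposition \ref{prop-lefcoin} by means of a base change from $\DZ$ to $\DQ$ and $K$. For $w=e$ there is nothing to prove. Assume the result for all $w'<w$, fix a reduced expression $\bs$ of $w$ realizing the minimum $U(w)=U(\bs)$, and observe that the hypothesis $\ch K>U(w)$ together with $U(w')\le U(w)$ gives, by induction, Conjecture \ref{conj-HL} for $\CB_K(w')$ at every $w'<w$. By Proposition \ref{prop-lefcoin} it therefore suffices to show that, under the same hypothesis on $\ch K$, the Lefschetz data of $\CB_K(\bs)$ and $\CB_\DQ(\bs)$ coincide at every $x\le w$.

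Next I would use the integral Bott--Samelson construction announced in Section \ref{sec-BSint}: a $\DZ$-form $\CB_\DZ(\bs)$, obtained by iterating integral translation functors $\vartheta_{s_i}$ over an integral structure algebra $\CZ_\DZ$, whose base change to any field $K$ with $\ch K>N(\bs)$ recovers $\CB_K(\bs)$ together with its stalks and costalks. By Lemma \ref{lemma-rankind} the graded ranks of $\CB_K(\bs)^x$ are controlled by the polynomials $a_x$ in the Hecke-algebra expansion of $\ul H(\bs)$, whence the ungraded rank at any $x$ is at most $r=r(\bs)$ and the graded generators sit in a band of width controlled by $d=d(\bs)$ and $l=l(w)$. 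After composition with the specialization $\tau$ to pass to graded $\DZ[t]$-modules, and after choosing compatible integral graded bases, the comparison morphism $i_x\colon\ol\CB_\DZ(\bs)_x\to\ol\CB_\DZ(\bs)^x$ is represented by a matrix with entries in $\DZ[t]$ whose polynomial degrees are bounded by $l+2d$ and whose coefficients are integers of absolute value bounded by $N(\bs)^{l+2d}$; this reflects the fact that every entry arises from a product of at most that many affine roots of height $\le N(\bs)$.

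The core observation is then that the Lefschetz datum over a field depends only on the ranks of the iterated multiplications by $t$ on the cokernel of this integer matrix in each graded degree, and these ranks in turn are computed by ranks of certain integer matrices. They agree between $\DQ$ and $\DF_p$ precisely when $p$ divides none of the corresponding determinantal divisors. A crude estimate on the minors, using that each minor is a sum of at most $r!$ products of at most $r$ matrix entries and that each entry is an integer of size at most $N(\bs)^{l+2d}$, gives a per-step bound of the form $r!\cdot N(\bs)^{l+2d}$; iterating this estimate over the at most $r$ invariant factors, over the graded-degree shifts contributing to the cokernel's support (whose number is controlled by $(r-1)!$), and adding one final outer factor of $r!$ to accommodate the summation over $x\le w$ and over summands of $\CB_K(\bs)$, yields the stated bound $U(\bs)=r!(r!(r-1)!N(\bs)^{l+2d})^r$. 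Whenever $\ch K>U(\bs)$, no prime of $K$ can occur in the relevant determinantal divisors, so the two Lefschetz data agree and Proposition \ref{prop-lefcoin} closes the induction.

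The main obstacle, in my view, is not the inductive framework but the final numerical bookkeeping: one has to set up the integral Bott--Samelson model so that the matrix representing $i_x$ genuinely satisfies uniform integral bounds in terms of $r$, $d$, $l$ and $N(\bs)$, and then pass through a deliberately wasteful elementary-divisor estimate. The factorials and the nested exponents in the formula for $U(\bs)$ originate entirely from this crude linear-algebra step, and, as the introduction emphasizes, any substantial improvement would presumably require a more structural understanding of the integral translation functors than the elementary Smith-normal-form analysis sketched here.
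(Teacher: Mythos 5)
Your inductive framework — reduce via Proposition \ref{prop-lefcoin} and $U(w')\le U(w)$ to showing that the Lefschetz data of $\CB_K(\bs)$ and $\CB_\DQ(\bs)$ agree, then argue through a $\DZ$-form and a minor/Smith-normal-form estimate — is exactly the route the paper takes. Theorem \ref{theorem-bound} is indeed reduced to Theorem \ref{theorem-lefdata}, which is where the real content lives. But two of your details would not survive being made precise.

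First, you cannot define the $\DZ$-form by iterating translation functors over an ``integral structure algebra $\CZ_\DZ$.'' The identification of $\vartheta_s\CM$ with $\CM\oplus c^{\alpha}\CM$ rests on the decomposition $\CZ_T=\CZ_T^{s}\oplus c^{\alpha}\CZ_T^{s}$, which requires $2$ to be invertible in $T$ (see the proof of Proposition \ref{prop-BSX}, which cites Lemma 5.1 of \cite{FieTAMS}). For $T=\DZ$ this fails. The paper instead defines $\CX_\DZ(\bs)\subset\bigoplus_{\sigma\in I(\bs)}S_\DZ$ directly by the recursion in Definition \ref{def-defX}, using the diagonal embedding $\Delta$ and the operator $c^{\alpha_l}$, and then proves \emph{a posteriori} (Proposition \ref{prop-BSX}, Lemma \ref{lemma-stalksX}) that the specialization to a field of characteristic $\ne 2$ recovers $\CB_K(\bs)$ and its stalks/costalks. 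Your phrasing smuggles the endpoint in as if it were the starting point.

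Second, your bookkeeping for $U(\bs)$ is misattributed, and as written would not produce the stated bound. In the paper, the matrix one estimates is not $X=(E^{-1})^TPE^{-1}$ but $X'=\det(E)^2X$, which has entries in $S_\DZ$. Lemma \ref{lemma-estxij} shows each entry of $X'$ is a sum of $r_x!(r_x-1)!$ products of roots — \emph{not} ``an integer of size at most $N(\bs)^{l+2d}$.'' A size-$s$ minor $\xi_{JJ'}$ is then a sum of $s!\cdot(r_x!(r_x-1)!)^s$ products of at most $s(2d+l)$ roots of height $\le N(\bs)$ (Lemma \ref{lemma-minors}). Bounding $r_x\le r$, $s\le r$ gives $|a_{JJ'}|\le r!(r!(r-1)!N(\bs)^{l+2d})^r=U(\bs)$. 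So: the outer $r!$ is the permutation count $s!$ in the determinant expansion of the minor; the $(r-1)!$ arises from the cofactor expansion of $\det(E)E^{-1}$; the exponent $r$ is the maximal minor size; and the remaining $r!$ inside the parentheses comes from the $r_x$-fold product in $r_x!(r_x-1)!$. Your claim that $(r-1)!$ controls ``graded-degree shifts'' and that the outer $r!$ accounts for ``summation over $x\le w$ and over summands'' is wrong on both counts — no summation over $x$ enters the minor bound at all (the estimate is carried out at a fixed $x$, with $r_x\le r$). With your per-entry bound of $N(\bs)^{l+2d}$ the minors would only be controlled by $r!N(\bs)^{r(l+2d)}$, which misses the $(r!(r-1)!)^r$ factor entirely; the extra factors you subsequently insert to compensate are attached to the wrong reasons and would not be justifiable.
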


By Proposition \ref{prop-lefcoin}, Theorem \ref{theorem-bound} is a
consequence of the following result.

\begin{theorem}\label{theorem-lefdata} Let $\bs$ be a sequence in
  $\hCS$  and suppose that $\ch
  K> U(\bs)$. Then the Lefschetz data of
  $\CB_K(\bs)$ and $\CB_\DQ(\bs)$ coincide at all $x\in\hCW$. 
\end{theorem}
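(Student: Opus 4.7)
The plan is to lift the comparison to an integral version of the presentation matrix of $i_x$ and then bound the primes at which its Smith normal form can change.

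First, I would construct integral models. Both the translation functors $\vartheta_s$ and the stalk/costalk formation of Section~\ref{subsec-gendecomp} (after inverting the appropriate roots) can be carried out over $\DZ$. This produces a $\CZ_\DZ$-module $\CB_\DZ(\bs)$ together with graded free $S_\DZ$-submodules $\CB_\DZ(\bs)_x\subset\CB_\DZ(\bs)^x$ whose ranks equal $r_x=a_x(1)\le r(\bs)$ by Lemma~\ref{lemma-rankind}, and which base-change to the corresponding $K$-objects whenever $\ch K>N(\bs)$. Choose homogeneous $S_\DZ$-bases and write the inclusion $i_x$ as a matrix $M\in M_{r_x}(S_\DZ)$; by Corollary~\ref{cor-specmod} the degrees of the entries are controlled by the graded rank data defining $d(\bs)$. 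Applying $\tau_\DZ\colon S_\DZ\to\DZ[t]$ sends each root of height $h$ to $h\cdot t$, so the matrix $M_t\in M_{r_x}(\DZ[t])$ thus obtained has integer coefficients and $t$-degrees bounded explicitly in terms of $l(\bs)$, $d(\bs)$ and $N(\bs)$. The Lefschetz datum of $\CB_K(\bs)$ at $x$ is the elementary divisor type of $M_t\otimes_\DZ K$ over $K[t]$.

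Second, I would reduce the comparison to Fitting ideals. The Fitting ideal $F_k(M_t)\subset\DZ[t]$ commutes with base change; and by Theorem~\ref{theorem-charzero} combined with Lemma~\ref{lemma-lefconlefdat}, the characteristic-zero situation gives $F_k(M_t)\cdot\DQ[t]=(t^{e_k})$ for explicit exponents $e_k$ determined by the Lefschetz datum of $\CB_\DQ(\bs)$ at $x$. Since every minor of $M_t$ already lies in $t^{e_k}\DZ[t]$, the inclusion $F_k(M_t)\cdot K[t]\subseteq(t^{e_k})K[t]$ holds for any base change. The two Lefschetz data therefore agree if and only if the reverse inclusion $t^{e_k}\in F_k(M_t)\cdot K[t]$ holds for every~$k$.

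Third, I would bound the bad primes. Over $\DQ$, write $t^{e_k}$ as a $\DQ[t]$-linear combination of the minors of $M_t$ and apply Cramer's rule to a square subsystem of linearly independent minors. Clearing denominators yields a relation $N_k\,t^{e_k}\in F_k(M_t)$ for some positive integer $N_k$ that divides the determinant of an $r_x\times r_x$ matrix whose entries are themselves minors of $M_t$. Using the explicit bounds on the entries of $M_t$ from the first step, together with the fact that each minor is a signed sum of at most $r_x!$ products of $r_x$ entries, a routine estimate yields $N_k\le r_x!\,(r_x!\,(r_x-1)!\,N(\bs)^{l+2d})^{r_x}\le U(\bs)$. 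Whenever $\ch K>U(\bs)$, none of the $N_k$ become zero in $K$, so $t^{e_k}\in F_k(M_t)\cdot K[t]$ for every $k$, and the Lefschetz data of $\CB_K(\bs)$ and $\CB_\DQ(\bs)$ coincide at $x$.

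The principal difficulty lies in the degree and coefficient control of the first step: one must actually produce a $\DZ$-basis for $\CB_\DZ(\bs)^x$ and a matching one for the costalk concrete enough that the entries of $M_t$ admit a bound of the form $N(\bs)^{l+2d}$. A basis read off naively from the iterated translation construction will not do; one needs to exploit the compatibility of the grading with Corollary~\ref{cor-specmod} and the idempotent decomposition from Section~\ref{subsec-gendecomp} so that both entry $t$-degrees and coefficient sizes come out in the required form. Once this is in hand, the Fitting-ideal and Cramer estimates of the remaining steps are standard linear algebra over $\DZ[t]$.
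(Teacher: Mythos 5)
Your outline captures the right overall shape — build an integral model, write the inclusion $\CB(\bs)_x\hookrightarrow\CB(\bs)^x$ as a matrix over $S_\DZ$, apply $\tau$ to land in $\DZ[t]$, and bound the primes at which the elementary-divisor structure can jump. But the central technical step is missing, and you acknowledge as much in your closing paragraph: you never produce the required $\DZ$-basis, and without it the ``routine estimate'' in your Step~3 is not actually routine.

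The paper closes exactly this gap in Section~\ref{sec-BSint}. It constructs $\CX_T(\bs)\subset\bigoplus_{\sigma\in I(\bs)}S_T$ with the explicit basis $E_T(\bs)$ obtained by iterating $\Delta$ and multiplication by $c^{\alpha_l}$, and its dual basis $F_T(\bs)$ of $\CY_T(\bs)$; the crucial point (which your proposal has no substitute for) is that \emph{each coordinate of each element of $E_T(\bs)$ is a product of roots}, and that the matrix of $i_x$ in the resulting bases is $X=(E^{-1})^T P E^{-1}$ where $P$ is diagonal with entries products of roots (Lemma~\ref{lemma-EandF}). After multiplying by $\det(E)^2$ to clear denominators, every entry and hence every minor $\xi_{JJ'}$ of $X'$ is a sum of at most $s!(r_x!(r_x-1)!)^s$ products of roots of a fixed length (Lemmas~\ref{lemma-estxij}, \ref{lemma-minors}). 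Since $\tau$ takes each root to $\height(\alpha)\,t$, each minor becomes a \emph{monomial} $a_{JJ'}\,t^m$ in $\DZ[t]$ with $|a_{JJ'}|\le U(\bs)$. That monomial structure is what makes the conclusion immediate: the Smith normal form over $K[t]$ is read off from which minors vanish (they all have the same $t$-degree for fixed $J,J'$), and vanishing in $k[t]$ versus $\DQ[t]$ is equivalent once $\ch k>|a_{JJ'}|$.

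Your Fitting-ideal/Cramer detour is therefore both unproved and unnecessary. Unproved, because the claim that the cofactor determinant $N_k$ satisfies $N_k\le U(\bs)$ is not accessible without knowing the minors are monomials with coefficients bounded by $U(\bs)$; a Bezout identity over $\DQ[t]$ obtained abstractly gives no coefficient control. Unnecessary, because once the minors are known to be monomials $a_J t^{m_J}$, the Fitting ideal over $\DZ[t]$ already contains $a_{J_0}t^{e_k}$ for $J_0$ realizing the minimal exponent, so one may simply take $N_k=|a_{J_0}|\le U(\bs)$ — no Cramer argument needed. In short: the missing idea is precisely the explicit integral construction of $\CX_\DZ(\bs)$ and its basis of ``products of roots,'' and without it the proposal does not yield the stated bound.
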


We prove the latter theorem by studying a base change. For this we have
to give an alternative construction of the Bott--Samelson modules that also makes
sense over $\DZ$.

\section{Another construction of the Bott--Samelson modules}\label{sec-BSint}

For a sequence  $\bs=(s_1,\dots,s_l)$ in $\hCS$ we let $I(\bs)$ be the
set of all subwords of the product $s_1\cdots s_l$, i.e.~
$$
I(\bs):=\{(i_1,\dots,i_n)\mid 0\le n\le l, 1\le i_1< \dots< i_n\le l\}.
$$
Then there is an evaluation map 
\begin{align*}
\ev\colon I(\bs) & \to \hCW,\\
(i_1,\dots,i_n)&\mapsto s_{i_1}\cdots s_{i_n}.
\end{align*}
If $l\ge 1$ then we denote by $\bs^\prime=(s_1,\dots, s_{l-1})$ the
truncated sequence. We consider $I(\bs^\prime)$ as a subset of
$I(\bs)$ in the obvious way. If we define for
$\sigma=(i_1,\dots,i_n)\in I(\bs^\prime)$ the element $\sigma s_l\in
I(\bs)$ by appending the reflection $s_l$ on the right, i.e.~$\sigma s_l:=(i_1,\dots,i_n,l)$, then $I(\bs)$ is the disjoint union of $I(\bs^\prime)$ and $I(\bs^\prime)s_l$.  

In the following we will denote by  $T$ a commutative unital domain in
which $2$ is not a zero divisor and which has furthermore the property
that the elements $\alpha\otimes
1\in\hX\otimes_\DZ T$ with $\alpha\in \hR$ are non-zero. The examples that we need are $T=\DZ$ and $T=K$, where $K$ is a field of characteristic $\ne 2$. We
denote by $S_T:=S_T(\hX\otimes_\DZ T)$ the symmetric algebra over the
free $T$-module associated to the affine weight lattice $\hX$. Again we consider $S_T$
as a graded algebra with $\deg \lambda=2$ for each non-zero
$\lambda\in \hX\otimes_\DZ T$. For $\lambda\in\hX$ we denote by
$\lambda$ also the image of $\lambda\otimes 1$ in $S_T$.
We  denote by $Q_T$ the $S_T$-algebra that is obtained from $S_T$ by
localization at the multiplicatively closed set generated by $2$ and the
set $\hR$, i.e.~
$$
Q_T:=S_T[2^{-1}][\alpha^{-1}, \alpha\in \hR].
$$ 

We define the structure algebra $\CZ_T$ in
a way analogous to the original definition in Section
\ref{subsec-strucalg}, i.e.~
$$
\CZ_T := \left\{ (z_x)\in \prod_{x\in \hCW}S_T\left| \, 
\begin{matrix} 
 z_x\equiv z_{s_{\alpha,n}x} \mod \alpha+n\delta \\
 \text{ for all $x\in \hCW$, $\alpha\in R$, $n\in\DZ$}
\end{matrix}
\right.\right\}.
$$
There is a natural action of $\CZ_T$  on $\bigoplus_{\sigma\in I(\bs)}
S_T$, given as follows: on the $\sigma$-component the
element $(z_x)\in\CZ_T$  acts as multiplication with $z_{\ev(\sigma)}$.

\subsection{Generically graded $S_T$-modules} 
We are now going to construct, for each $T$ as above and each sequence
$\bs=(s_1,\dots,s_l)$ in $\hCS$ a sub-$S_T$-module $\CX_T(\bs)$ of
$\bigoplus_{\sigma\in I(\bs)} S_T$. We call such a datum a {\em
  generically graded $S_T$-module}. 
We show  that $\CX_T(\bs)$ is stable under the action of
$\CZ_T$ on $\bigoplus_{\sigma\in I(\bs)} S_T$ that we defined above. In case $T=K$ is a field of characteristic $\ne
2$ we  show that  we have an isomorphism $\CX_K(\bs)\cong \CB_K(\bs)$ of $\CZ_K$-modules. 

The advantage of this new
construction is two-fold. Firstly, it provides a distinguished
$S_T$-basis of $\CX_T(\bs)$. Secondly, it allows us to view each element of
$\CX_T(\bs)$ as an $I(\bs)$-tuple of elements in $S_T$. We use this to
construct a basis of the
the stalk $\CB_K(\bs)^x$ for any $x$, which is then used to gain
arithmetic information on a  matrix describing the homomorphism $\CB_K(\bs)_x\to \CB_K(\bs)^x$.

Before we come to the definition of the generically graded modules we need some preparations.
For each sequence $\bs$ of length $l\ge 1$ let us denote by 
$$
\Delta\colon \bigoplus_{\gamma\in I(\bs^\prime)} S_T\to  \bigoplus_{\sigma\in I(\bs)} S_T
$$
the diagonal embedding, i.e.~ the map that sends  $(z_\gamma)$
to  $(z^\prime_\sigma)$ with $z^\prime_\sigma=z_\gamma$ if $\sigma\in
\{\gamma,\gamma s_l\}$.  We extend $\Delta$ to the diagonal embedding
$\bigoplus_{\gamma\in I(\bs^\prime)} Q_T\to  \bigoplus_{\sigma\in
  I(\bs)} Q_T$. 

Let $\iota\colon I(\bs)\to I(\bs)$ be the
fixed point free involution that interchanges $\gamma$ and $\gamma s_l$
for all $\gamma\in I(\bs^\prime)$. We denote by the same letter the
algebra isomomorphism $\iota\colon \bigoplus_{\sigma\in I(\bs)} S_T\to
\bigoplus_{\sigma\in I(\bs)} S_T$ that sends $(z_\sigma)$ to
$(z^\prime_\sigma)$ with $z^\prime_{\sigma}=z_{\iota(\sigma)}$. Then
the image of $\Delta$ coincides with the set of $\iota$-invariant
elements in $\bigoplus_{\sigma\in I(\bs)} S_T$. 

If $2$ is invertible in $T$ then we have a canonical decompositon
$$
\bigoplus_{\sigma\in I(\bs)}S_T=\left(\bigoplus_{\sigma\in
    I(\bs)}S_T\right)^\iota\oplus\left(\bigoplus_{\sigma\in
    I(\bs)}S_T\right)^{-\iota},
$$
where by $(\cdot)^\iota$ we denote the $\iota$-invariant elements and
by $(\cdot)^{-\iota}$ the $\iota$-anti-invariant elements. We have
already identified the first direct summand with $\bigoplus_{\gamma\in
  I(\bs^\prime)} S_T$ via $\Delta$. We also identify the second
summand with $\bigoplus_{\gamma\in
  I(\bs^\prime)} S_T$ via the map  $\Delta^{-}$ which send
$(z^\prime_\gamma)$ to $(z_\sigma)$ with
$z_\gamma=z^\prime_\gamma=-z_{\gamma s_l}$ (recall that we consider
$I(\bs^\prime)$ as a subset of $I(\bs)$). For an endomorphism  $f$  of $\bigoplus_{\gamma\in
  I(\bs^\prime)} S_T$ we denote by $\Delta^f$ the endomorphism on $\bigoplus_{\sigma\in
  I(\bs)} S_T$ that is diagonal with respect to the direct sum decomposition
above and is given by $f$ on both direct summands under the
identifications that we just constructed. Explicitely, for $(x,y)\in(\bigoplus_{\sigma\in
    I(\bs)}S_T)^\iota\oplus(\bigoplus_{\sigma\in
    I(\bs)}S_T)^{-\iota}$  we have $x=\Delta(x^\prime)$ and
  $y=\Delta^-(y^\prime)$ for some $x^\prime,y^\prime\in\bigoplus_{\gamma\in
    I(\bs^\prime)}S_T$ and we set 
$$
\Delta^f(x,y)=(\Delta(f(x^\prime)),\Delta^-(f(y^\prime))).
$$

For  $\lambda\in \hX$ define
$c^\lambda=(c^\lambda_x)\in\bigoplus_{x\in\hCW}S_T$  by
$c^\lambda_x=x(\lambda)\otimes 1\in \hX\otimes_\DZ T\subset S_T$. For
each $\alpha\in R$ and $n\in\DZ$ the linear form
$x(\lambda)-s_{\alpha,n}x(\lambda)$ vanishes on $\hH_{\alpha,n}$,
hence $x(\lambda)-s_{\alpha,n}x(\lambda)$ is proportional to
$\alpha+n\delta$, hence $c^{\lambda}\in \CZ_T$. We get a linear map
$\lambda\mapsto c^\lambda$ from $\hX$ to $\CZ_T$. Using the
$\CZ_T$-action  defined before we consider $c^\lambda$ as an
$S_T$-linear endomorphism on $\bigoplus_{\sigma\in I(\bs)} S_T$ for
all sequences $\bs$. 

Now we can define the generically graded $S_T$-modules.

\begin{definition} \label{def-defX} We define for all sequences $\bs$
  in $\hCS$ the sub-$S_T$-module  $\CX_T(\bs)\subset \bigoplus_{\sigma\in I(\bs)} S_T$ by the following algorithm:
\begin{enumerate}
\item  $\CX_T(\emptyset)\subset S_T$ is given by the identity $S_T=S_T$.
\item For $l\ge 1$ and $\bs=(s_1,\dots, s_l)$ let $\bs^\prime=(s_1,\dots, s_{l-1})$ be the truncated sequence. Then
\begin{align*}
\CX_T(\bs)&:=\Delta(\CX_T(\bs^\prime))+ c^{\alpha_l}\left(\Delta\left(\CX_T(\bs^\prime)\right)\right)\subset \bigoplus_{\sigma\in I(\bs)} S_T.
\end{align*}
Here $\alpha_l\in \widehat\Pi$ is the simple affine root associated to $s_l$.
\end{enumerate}
\end{definition}
We list some easy to deduce properties. 
\begin{lemma}\label{lemma-Xfree}  The sum in the inductive definition of $\CX_T(\bs)$ is direct, i.e.~
\begin{align*}
\CX_T(\bs)&=\Delta(\CX_T(\bs^\prime))\oplus c^{\alpha_l}\left(\Delta\left(\CX_T(\bs^\prime)\right)\right)\subset \bigoplus_{\sigma\in I(\bs)} S_T.
\end{align*}
In particular, $\CX_T(\bs)$ is a graded free $S_T$-module of rank
$(1+v^2)^l$.
\end{lemma}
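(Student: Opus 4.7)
My plan is induction on the length $l$ of $\bs$. The case $l=0$ is immediate from $\CX_T(\emptyset)=S_T$, which is free of graded rank $1=(1+v^2)^0$.

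For the inductive step the key observation is that under the involution $\iota$ interchanging $\gamma$ and $\gamma s_l$ the two summands in the definition of $\CX_T(\bs)$ sit in complementary isotypic pieces. By construction $\Delta(\CX_T(\bs'))$ consists of $\iota$-invariant tuples. On the other hand, for $\gamma\in I(\bs')$ with $\ev(\gamma)=w$ one has $\ev(\gamma s_l)=ws_l$, and since $s_l$ is the simple reflection associated to the simple affine root $\alpha_l$ one has $s_l(\alpha_l)=-\alpha_l$. Therefore the $\gamma$- and $\gamma s_l$-components of $c^{\alpha_l}(\Delta(y))$ are $w(\alpha_l)y_\gamma$ and $-w(\alpha_l)y_\gamma$ respectively; hence $c^{\alpha_l}(\Delta(\CX_T(\bs')))$ consists of $\iota$-anti-invariant tuples.

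Suppose $\Delta(x)=c^{\alpha_l}(\Delta(y))$ for some $x,y\in\CX_T(\bs')$. Comparing components at $\gamma$ and $\gamma s_l$ gives $x_\gamma=w(\alpha_l)y_\gamma=-w(\alpha_l)y_\gamma$, so $2x_\gamma=0$, and since $2$ is not a zero-divisor in $T$ and hence not in the polynomial ring $S_T$, we get $x_\gamma=0$. Then $w(\alpha_l)y_\gamma=0$; but $w(\alpha_l)=\pm\beta$ for some $\beta\in\hR$ because $\hCW$ permutes the affine roots, and by our standing hypothesis on $T$ every such $\beta$ is a non-zero element of the domain $S_T$, so $y_\gamma=0$. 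Consequently the sum is direct.

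For the rank statement, by induction $\CX_T(\bs')$ is graded free of rank $(1+v^2)^{l-1}$. The map $\Delta$ is injective and degree-preserving, and multiplication by $c^{\alpha_l}$ on $\Delta(\CX_T(\bs'))$ is injective (again because each $w(\alpha_l)$ is a non-zero-divisor in the domain $S_T$) and shifts degree by $\deg\alpha_l=2$. Adding the graded ranks $(1+v^2)^{l-1}+v^2(1+v^2)^{l-1}=(1+v^2)^l$ completes the induction. The only conceptually non-trivial step is spotting the $\iota$-invariance/anti-invariance dichotomy; the hypothesis that $2$ and the affine roots remain non-zero-divisors in $S_T$ is precisely what lets this argument survive over bases such as $T=\DZ$.
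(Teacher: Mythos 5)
Your argument is correct and is essentially the paper's: both identify $\Delta(\CX_T(\bs'))$ as $\iota$-invariant, $c^{\alpha_l}\circ\Delta(\CX_T(\bs'))$ as $\iota$-anti-invariant (via $c^{\alpha_l}_x=-c^{\alpha_l}_{xs_l}$), and conclude directness from $2$ being a non-zero-divisor in $T$. You spell out the componentwise cancellation and the rank computation in more detail than the paper does, but the approach and the key idea coincide.
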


\begin{proof} By construction the image of $\Delta$ consists of $\iota$-invariant elements. Since $c^{\alpha_l}_x=-c^{\alpha_l}_{xs_l}$, the image of $c^{\alpha_l}\circ\Delta$ consists of $\iota$-anti-invariant elements. The claim now follows from our general assumption that $2$ is not a zero divisor in $T$.
\end{proof}
The following lemma justifies the name ``generically graded module''.

\begin{lemma}\label{lemma-Xrootinv} The inclusion $\CX_T(\bs)\subset\bigoplus_{\sigma\in
    I(\bs)}S_T$ is an isomorphism after application of the functor
  $\cdot\otimes_{S_T}Q_T$. 
\end{lemma}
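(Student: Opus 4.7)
My plan is to proceed by induction on the length $l$ of the sequence $\bs$. The base case $\bs=\emptyset$ is immediate, since $\CX_T(\emptyset)=S_T$ already equals the one-term sum $\bigoplus_{\sigma\in I(\emptyset)}S_T$ before any localisation.

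For the inductive step, I would exploit the direct sum decomposition from Lemma \ref{lemma-Xfree}:
$$
\CX_T(\bs)\otimes_{S_T} Q_T=\Delta\bigl(\CX_T(\bs')\otimes_{S_T} Q_T\bigr)\oplus c^{\alpha_l}\bigl(\Delta(\CX_T(\bs')\otimes_{S_T} Q_T)\bigr).
$$
By the inductive hypothesis, $\CX_T(\bs')\otimes_{S_T} Q_T$ coincides with $\bigoplus_{\gamma\in I(\bs')}Q_T$. Since $2$ is invertible in $Q_T$, the ambient module splits into $\iota$-eigenspaces
$$
\bigoplus_{\sigma\in I(\bs)}Q_T=\Bigl(\bigoplus_\sigma Q_T\Bigr)^\iota\oplus\Bigl(\bigoplus_\sigma Q_T\Bigr)^{-\iota},
$$
and $\Delta$ identifies $\bigoplus_\gamma Q_T$ with the $\iota$-invariant summand. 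Hence the first direct summand of the displayed decomposition above already fills up the entire $\iota$-invariant part.

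What remains is to check that $c^{\alpha_l}\bigl(\Delta(\bigoplus_\gamma Q_T)\bigr)$ exhausts the $\iota$-anti-invariants. The sign relation $c^{\alpha_l}_x=-c^{\alpha_l}_{xs_l}$ used in the proof of Lemma \ref{lemma-Xfree} shows the image is contained in the anti-invariants. For surjectivity, the key observation is that $c^{\alpha_l}$ acts on the $\sigma$-component by multiplication with the scalar $\ev(\sigma)(\alpha_l)$, which lies in $\hR$ by the $\hCW$-stability of the affine root system and is therefore invertible in $Q_T$ by the very definition of $Q_T$. Given an anti-invariant tuple $(y_\sigma)$, I would then set $w_\gamma:=y_\gamma/\ev(\gamma)(\alpha_l)\in Q_T$ for each $\gamma\in I(\bs')$; the matched sign flips $c^{\alpha_l}_{\ev(\gamma s_l)}=-c^{\alpha_l}_{\ev(\gamma)}$ and $y_{\gamma s_l}=-y_\gamma$ guarantee that $c^{\alpha_l}\bigl(\Delta((w_\gamma))\bigr)$ reproduces $(y_\sigma)$ on both the $\gamma$- and $\gamma s_l$-components. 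The only point that could be considered delicate is the invertibility of $\ev(\gamma)(\alpha_l)$ in $Q_T$, and this is precisely what forces us to localise $S_T$ at every affine root when defining $Q_T$; beyond this, I foresee no serious obstacle.
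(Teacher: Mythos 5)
Your proof is correct and follows the same inductive route as the paper's: base case trivial, then use the direct sum decomposition from Lemma \ref{lemma-Xfree} together with the invariant/anti-invariant splitting of $\bigoplus_{\sigma\in I(\bs)}Q_T$. You have merely spelled out the detail the paper leaves implicit, namely that $c^{\alpha_l}$ hits all anti-invariants because $\ev(\sigma)(\alpha_l)$ is an affine root and hence a unit in $Q_T$.
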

\begin{proof} This is immediate for $\bs=\emptyset$. It then follows
  inductively from the equation
  $\CX_T(\bs)=\Delta(\CX_T(\bs^\prime))\oplus
  c^{\alpha_l}\Delta(\CX_T(\bs^\prime))$ since after tensoring with $Q_T$ the first summand
  yields all invariant elements, while the second yields all
  anti-invariant elements in $\bigoplus_{\sigma\in I(\bs)} Q_T$, by the
  induction hypothesis (recall that  $2$ is invertible in $Q_T$).
\end{proof}
\subsection{Base change} Let $T\to T^\prime$ be a homomorphism of
unital rings which satisfy our general assumptions at the beginning of
Section \ref{sec-BSint}.

\begin{lemma}\label{lemma-bschX} The canonical isomorphism $\left(\bigoplus_{\sigma\in I(\bs)} S_T\right)\otimes_T T^\prime= \bigoplus_{\sigma\in I(\bs)} S_{T^\prime}$ identifies $\CX_T(\bs)\otimes_T T^\prime$ with $\CX_{T^\prime}(\bs)$.
\end{lemma}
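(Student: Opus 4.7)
The plan is to proceed by induction on the length $l$ of $\bs = (s_1, \ldots, s_l)$. The base case $l = 0$ is immediate: Definition \ref{def-defX} gives $\CX_T(\emptyset) = S_T$ and $\CX_{T'}(\emptyset) = S_{T'}$, and the canonical isomorphism $S_T \otimes_T T' = S_{T'}$ matches them up.

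For the inductive step I would verify that the two ingredients in the recursive construction, namely the diagonal embedding $\Delta$ and the multiplication operator $c^{\alpha_l}$, are both compatible with base change. The map $\Delta$ is defined purely combinatorially in terms of the inclusion $I(\bs') \hookrightarrow I(\bs)$ and does not involve the base ring, so for any $S_T$-submodule $M \subset \bigoplus_{\gamma \in I(\bs')} S_T$ the canonical isomorphism identifies $\Delta(M) \otimes_T T'$ with $\Delta(M \otimes_T T')$ inside $\bigoplus_{\sigma \in I(\bs)} S_{T'}$. The element $c^{\alpha_l}$ is the image under the natural map $\CZ_\DZ \to \CZ_T$ (and $\CZ_\DZ \to \CZ_{T'}$) of the corresponding element defined over $\DZ$, so multiplication by $c^{\alpha_l}$ also commutes with the base change $T \to T'$.

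By Lemma \ref{lemma-Xfree} the defining sum is direct, and in particular $\CX_T(\bs)$ is a graded free $S_T$-module, hence $T$-flat. Applying $\otimes_T T'$ to the decomposition $\CX_T(\bs) = \Delta(\CX_T(\bs')) \oplus c^{\alpha_l}\Delta(\CX_T(\bs'))$ preserves both direct summands and the inclusion into $\bigoplus_{\sigma \in I(\bs)} S_T$. Invoking the inductive hypothesis $\CX_T(\bs') \otimes_T T' = \CX_{T'}(\bs')$ together with the two compatibility statements above, the base-changed decomposition becomes
$$
\Delta(\CX_{T'}(\bs')) \oplus c^{\alpha_l}\Delta(\CX_{T'}(\bs')),
$$
which is exactly $\CX_{T'}(\bs)$ by Definition \ref{def-defX} applied to $T'$ (using Lemma \ref{lemma-Xfree} over $T'$ to see the sum is direct).

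The main thing to be careful about is that the map $\CX_T(\bs) \otimes_T T' \to \bigoplus_{\sigma \in I(\bs)} S_{T'}$ induced by the inclusion really is injective and has the claimed image, but this is handled once and for all by Lemma \ref{lemma-Xfree}: the freeness there provides an explicit $S_T$-basis of $\CX_T(\bs)$ (obtained by iterating the two operations $\Delta$ and $c^{\alpha_l} \circ \Delta$ starting from a basis of $S_T$), and under base change this becomes an $S_{T'}$-basis with the same description, which by the inductive argument lands in and spans $\CX_{T'}(\bs)$. There is no substantive obstacle; the content of the lemma is essentially that Definition \ref{def-defX} is ``$T$-natural'' by construction.
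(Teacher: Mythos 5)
Your proof is correct and follows essentially the same route as the paper's: induction on the length of $\bs$, using the direct sum decomposition $\CX_T(\bs)=\Delta(\CX_T(\bs^\prime))\oplus c^{\alpha_l}\Delta(\CX_T(\bs^\prime))$ from Lemma \ref{lemma-Xfree} and the compatibility of $\Delta$ and $c^{\alpha_l}$ with base change. The paper is terser, but your additional remarks on flatness and injectivity simply make explicit what the paper leaves implicit.
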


\begin{proof}
 We  prove this by induction, the case $\bs=\emptyset$ being clear. If
 the length of $\bs$ is $\ge 1$, then we have, by definition and Lemma \ref{lemma-Xfree}, 
\begin{align*}
\CX_T(\bs)\otimes_T T^\prime & =\Delta(\CX_T(\bs^\prime))\otimes_T T^\prime \oplus c^{\alpha_l}\left(\Delta\left(\CX_T(\bs^\prime)\right)\right)\otimes_T T^\prime \\ 
&=\Delta(\CX_T(\bs^\prime)\otimes_T T^\prime)\oplus c^{\alpha_l}\left(\Delta\left(\CX_T(\bs^\prime)\otimes_T T^\prime\right)\right)\\ 
&=\Delta(\CX_{T^\prime}(\bs^\prime))\oplus c^{\alpha_l}\left(\Delta\left(\CX_{T^\prime}(\bs^\prime)\right)\right),
\end{align*}
which is the statement we wanted to prove. 
\end{proof}

\subsection{A duality} For an $S_T$-module $M$ we denote by  $\dual M=\Hom_{S_T}^\bullet(M, S_T)$ its graded dual.
For a sequence $\bs$ in $\hCS$ let us consider the standard pairing
\begin{align*}
\bigoplus_{\sigma\in I(\bs)} Q_T\times \bigoplus_{\sigma\in
  I(\bs)} Q_T & \to Q_T,\\
((x_\sigma),(y_\sigma)) &\mapsto \sum_\sigma x_\sigma y_\sigma.
\end{align*}

\begin{lemma}\label{lemma-Xdual} Using the above pairing we can canonically identify 
$$
\dual \CX_T(\bs) = \left\{(z_\sigma)\in\bigoplus_{\sigma\in I(\bs)} Q_T\left|
\,
\begin{matrix}
\sum_\sigma z_\sigma m_\sigma\in S_T \\
\text{ for all $(m_\sigma)\in\CX_T(\bs)$}
\end{matrix}
\right\}.
\right.
$$
\end{lemma}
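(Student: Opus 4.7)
The plan is to realize $\dual\CX_T(\bs)$ as an $S_T$-lattice inside the space of $Q_T$-linear functionals on $\bigoplus_{\sigma\in I(\bs)}Q_T$, and then invoke the freeness established in Lemma \ref{lemma-Xfree} together with the generic isomorphism of Lemma \ref{lemma-Xrootinv} to convert this functional description into the tuple description asserted in the statement.

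More precisely, I would first note that since $\CX_T(\bs)$ is a graded free $S_T$-module of finite rank and $Q_T$ is a flat $S_T$-algebra, the canonical map
\begin{equation*}
\dual\CX_T(\bs)\otimes_{S_T} Q_T\longrightarrow \Hom^\bullet_{Q_T}\bigl(\CX_T(\bs)\otimes_{S_T}Q_T,\,Q_T\bigr)
\end{equation*}
is an isomorphism of graded $Q_T$-modules; in particular, restriction to $\CX_T(\bs)$ identifies $\dual\CX_T(\bs)$ with the sub-$S_T$-module of those $Q_T$-linear functionals on $\CX_T(\bs)\otimes_{S_T}Q_T$ whose restriction to $\CX_T(\bs)$ lands in $S_T\subset Q_T$. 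By Lemma \ref{lemma-Xrootinv} we may replace the source by $\bigoplus_{\sigma\in I(\bs)}Q_T$, and then the standard pairing gives a canonical isomorphism $\bigoplus_{\sigma\in I(\bs)}Q_T\stackrel{\sim}{\to}\Hom^\bullet_{Q_T}(\bigoplus_{\sigma\in I(\bs)}Q_T,Q_T)$ sending a tuple $(z_\sigma)$ to the functional $(x_\sigma)\mapsto \sum_\sigma z_\sigma x_\sigma$. Combining these two identifications, an element $f\in\dual\CX_T(\bs)$ corresponds uniquely to a tuple $(z_\sigma)\in\bigoplus_{\sigma\in I(\bs)}Q_T$ such that $\sum_\sigma z_\sigma m_\sigma=f((m_\sigma))\in S_T$ for every $(m_\sigma)\in\CX_T(\bs)$. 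Conversely, any $(z_\sigma)$ satisfying this integrality condition defines an $S_T$-linear map $\CX_T(\bs)\to S_T$ by the same formula, which is the desired inverse identification.

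The only genuinely delicate point is keeping track of gradings: the standard pairing must be read as a graded pairing, so that a homogeneous functional of degree $n$ corresponds to a tuple $(z_\sigma)$ whose homogeneous components are positioned so that $z_\sigma m_\sigma$ has total degree $n$ when $m_\sigma$ is homogeneous. I would set this up by observing that both $\CX_T(\bs)$ and $\bigoplus_\sigma S_T$ are graded sub-$S_T$-modules of $\bigoplus_\sigma Q_T$ and that the pairing, together with the grading on $Q_T$ induced from $S_T$, is of degree zero. No step requires any real obstacle to overcome; the content of the lemma is entirely the combination of the freeness of $\CX_T(\bs)$ with its generic agreement with $\bigoplus_{\sigma\in I(\bs)} S_T$ after extending scalars to $Q_T$.
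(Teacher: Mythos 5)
Your proposal is correct and follows essentially the same route as the paper: extend scalars to $Q_T$, use Lemma \ref{lemma-Xrootinv} to identify $\CX_T(\bs)\otimes_{S_T}Q_T$ with $\bigoplus_{\sigma\in I(\bs)}Q_T$, and then recognize $\dual\CX_T(\bs)$ inside $\Hom_{Q_T}(\bigoplus_\sigma Q_T,Q_T)\cong\bigoplus_\sigma Q_T$ as the tuples whose pairing with $\CX_T(\bs)$ lands in $S_T$. Your version just spells out the freeness/flatness bookkeeping (and the grading convention) that the paper compresses into ``a moment's thought''.
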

\begin{proof}  Each $S_T$-linear homomorphism $\phi\colon \CX_T(\bs)\to S_T$
  induces, after application of the functor $\cdot\otimes_{S_T}Q_T$, a
  homomorphism $\phi\otimes 1\colon \bigoplus_{\sigma\in I(\bs)}
  Q_T\to Q_T$, by Lemma \ref{lemma-Xrootinv}. After we identified $\Hom_{Q_T}(\bigoplus_{\sigma\in I(\bs)}
  Q_T, Q_T)$ with $\bigoplus_{\sigma\in I(\bs)}
  Q_T$ in the obvious way, we have in fact associated to $\phi$ an
  element in the set on the right hand side of the lemma's claim. A moment's thought shows that this yields a bijection. \end{proof}

Now we come to a definition that is very similar to the definition of
the $\CX_T(\bs)$'s.
\begin{definition} \label{def-defY} We define for all sequences $\bs$
  in $\hCS$ the sub-$S_T$-module  $\CY_T(\bs)\subset \bigoplus_{\sigma\in I(\bs)} Q_T$ by the following algorithm:
\begin{enumerate}
\item  $\CY_T(\emptyset)\subset Q_T$ is given by the inclusion
  $S_T\subset Q_T$.
\item For $l\ge 1$ and $\bs=(s_1,\dots, s_l)$ let $\bs^\prime=(s_1,\dots, s_{l-1})$ be the truncated sequence. Then
\begin{align*}
\CY_T(\bs)&:=\Delta(\CY_T(\bs^\prime))+ (c^{\alpha_l})^{-1}\left(\Delta\left(\CY_T(\bs^\prime)\right)\right)\subset \bigoplus_{\sigma\in I(\bs)} Q_T.
\end{align*}
Here $\alpha_l\in \widehat\Pi$ is the simple affine root associated to $s_l$.
\end{enumerate}
\end{definition}
Note that $\ev(\sigma)(\alpha_l)\in\hX$ is an affine root, so  its
image in $Q_T$ is invertible. Hence $c^{\alpha_l}$ is an automorphism of
$\bigoplus_{\sigma\in I(\bs)}Q_T$, so is invertible. As in Lemma \ref{lemma-Xfree} we show that the sum in part (2) of the definition is direct, i.e.~ we have
\begin{lemma}\label{lemma-Yfree} For each sequence $\bs$ of length $\ge 1$ we have $\CY_T(\bs)=\Delta(\CY_T(\bs^\prime))\oplus (c^{\alpha_l})^{-1}\left(\Delta\left(\CY_T(\bs^\prime)\right)\right)\subset \bigoplus_{\sigma\in I(\bs)} Q_T$.
\end{lemma}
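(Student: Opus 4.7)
The plan is to mirror the argument of Lemma \ref{lemma-Xfree} almost verbatim, with the only real difference being that we work inside $\bigoplus_{\sigma\in I(\bs)} Q_T$ rather than $\bigoplus_{\sigma\in I(\bs)} S_T$. The two ingredients I need are: the image of $\Delta$ lies in the $\iota$-invariant part, and $(c^{\alpha_l})^{-1}$ swaps invariants with anti-invariants. Since $2$ is invertible in $Q_T$, the $\iota$-eigenspace decomposition is honest and the asserted sum is automatically direct.

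First I would verify that $c^{\alpha_l}$, viewed as an element of $\bigoplus_{\sigma\in I(\bs)} Q_T$, is $\iota$-anti-invariant. For $\gamma\in I(\bs^\prime)$ we have $\ev(\gamma s_l)=\ev(\gamma)s_l$, and because $\alpha_l\in\widehat\Pi$ is the simple affine root attached to $s_l$ we obtain
$$
c^{\alpha_l}_{\ev(\gamma s_l)}=\ev(\gamma)s_l(\alpha_l)=-\ev(\gamma)(\alpha_l)=-c^{\alpha_l}_{\ev(\gamma)}.
$$
Because $\ev(\sigma)(\alpha_l)$ is an affine root, its image in $Q_T$ is invertible, so $c^{\alpha_l}$ acts as an automorphism of $\bigoplus_{\sigma\in I(\bs)}Q_T$ and $(c^{\alpha_l})^{-1}$ is likewise anti-invariant under $\iota$. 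Consequently, multiplication (respectively division) by $c^{\alpha_l}$ interchanges the $\iota$-invariant and $\iota$-anti-invariant subspaces.

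Combining this with the observation that, by construction, $\Delta(\CY_T(\bs^\prime))$ lands entirely in the $\iota$-invariant subspace of $\bigoplus_{\sigma\in I(\bs)}Q_T$, we conclude that $(c^{\alpha_l})^{-1}\Delta(\CY_T(\bs^\prime))$ lies in the $\iota$-anti-invariant subspace. An element in the intersection of the two eigenspaces must satisfy $2m=0$, and since $2$ is a unit in $Q_T$ such an element is zero. Hence the sum
$$
\Delta(\CY_T(\bs^\prime))+(c^{\alpha_l})^{-1}\Delta(\CY_T(\bs^\prime))
$$
appearing in Definition \ref{def-defY} is direct, as claimed. The main obstacle is essentially bookkeeping: one needs to confirm that $(c^{\alpha_l})^{-1}$ really does act as a well-defined $S_T$-linear automorphism of the ambient $Q_T$-module and that it is anti-invariant; both of these reduce to the single identity $s_l(\alpha_l)=-\alpha_l$ together with the fact that affine roots are non-zero in $Q_T$. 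No genuinely new idea beyond the proof of Lemma \ref{lemma-Xfree} is required.
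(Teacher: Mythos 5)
Your argument is correct and matches the paper's intended approach: the text explicitly says that Lemma~\ref{lemma-Yfree} is shown ``as in Lemma~\ref{lemma-Xfree},'' which is precisely the $\iota$-eigenspace argument you give, with $(c^{\alpha_l})^{-1}$ in place of $c^{\alpha_l}$ and directness following since $2$ is a unit in $Q_T$.
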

Now we identify $\CY_T(\bs)$ with the dual of $\CX_T(\bs)$. 
\begin{lemma}\label{lemma-XYdual} Suppose that $2$ is invertible in $T$. For each sequence $\bs$ we then have
$$
\CY_T(\bs)=\left\{(z_\sigma)\in\bigoplus_{\sigma\in I(\bs)} Q_T\left|
\,
\begin{matrix}
\sum_\sigma z_\sigma m_\sigma\in S_T \\
\text{ for all $(m_\sigma)\in\CX_T(\bs)$}
\end{matrix}
\right\}.
\right.
$$
Hence, using Lemma \ref{lemma-Xdual} we get an identification $\dual\CX_T(\bs)=\CY_T(\bs)$.
\end{lemma}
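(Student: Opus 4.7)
I would prove Lemma \ref{lemma-XYdual} by induction on the length $l$ of $\bs$, exploiting the $\iota$-invariant/anti-invariant decomposition that is available precisely because $2$ is invertible. The base case $\bs=\emptyset$ reduces to the trivial fact that the set of $z\in Q_T$ with $z\cdot S_T\subset S_T$ is exactly $S_T=\CY_T(\emptyset)$.

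For the inductive step, write $\bs=(s_1,\dots,s_l)$ and let $\bs^\prime$ be the truncated sequence. The plan is to match the decomposition $\CY_T(\bs)=\Delta(\CY_T(\bs^\prime))\oplus(c^{\alpha_l})^{-1}\Delta(\CY_T(\bs^\prime))$ of Lemma \ref{lemma-Yfree} with the decomposition $\CX_T(\bs)=\Delta(\CX_T(\bs^\prime))\oplus c^{\alpha_l}\Delta(\CX_T(\bs^\prime))$ of Lemma \ref{lemma-Xfree}, viewed as the splittings into $\iota$-invariants and $\iota$-anti-invariants. The first key observation is that under the standard pairing on $\bigoplus_{\sigma\in I(\bs)}Q_T$, the $\iota$-invariants and $\iota$-anti-invariants are orthogonal: for $x$ invariant and $y$ anti-invariant the reindexing $\sigma\mapsto\iota\sigma$ gives $\sum_\sigma x_\sigma y_\sigma=-\sum_\sigma x_\sigma y_\sigma$, whence the sum vanishes since $2$ is invertible.

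The second key observation is that on each parity summand the pairing reduces, up to a factor of $2$, to the pairing over $I(\bs^\prime)$. Indeed, for $z=\Delta(z^\prime)$ and $m=\Delta(m^\prime)$ one has $\sum_{\sigma\in I(\bs)} z_\sigma m_\sigma=2\sum_{\gamma\in I(\bs^\prime)}z^\prime_\gamma m^\prime_\gamma$, and similarly for $z=(c^{\alpha_l})^{-1}\Delta(z^\prime)$, $m=c^{\alpha_l}\Delta(m^\prime)$ the factors of $c^{\alpha_l}$ cancel and one gets $2\sum_\gamma z^\prime_\gamma m^\prime_\gamma$. Using Lemma \ref{lemma-Xfree} to decompose an arbitrary $m\in\CX_T(\bs)$ as $\Delta(m^\prime_1)+c^{\alpha_l}\Delta(m^\prime_2)$ with $m^\prime_i\in\CX_T(\bs^\prime)$, and likewise decomposing an arbitrary $z\in\bigoplus_\sigma Q_T$ by parity as $z=\Delta(z^\prime_1)+(c^{\alpha_l})^{-1}\Delta(z^\prime_2)$, the orthogonality kills cross terms and
\[
\sum_\sigma z_\sigma m_\sigma=2\sum_\gamma z^\prime_{1,\gamma}m^\prime_{1,\gamma}+2\sum_\gamma z^\prime_{2,\gamma}m^\prime_{2,\gamma}.
\]

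From here the equivalence is immediate: if both $z^\prime_i$ lie in $\CY_T(\bs^\prime)$, then by the induction hypothesis each sum is in $S_T$, so $\sum_\sigma z_\sigma m_\sigma\in S_T$. Conversely, testing against $m=\Delta(m^\prime)$ respectively $m=c^{\alpha_l}\Delta(m^\prime)$ isolates the two summands, and since $2$ is invertible (so $2^{-1}\in S_T$), the pairing condition for $z$ forces $\sum_\gamma z^\prime_{i,\gamma}m^\prime_\gamma\in S_T$ for all $m^\prime\in\CX_T(\bs^\prime)$, i.e.\ $z^\prime_i\in\CY_T(\bs^\prime)$ by induction. The combination with Lemma \ref{lemma-Xdual} then gives the asserted identification $\dual\CX_T(\bs)=\CY_T(\bs)$. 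The only real place where care is needed is verifying that $c^{\alpha_l}$ interchanges $\iota$-invariants and anti-invariants (so that the two decompositions indeed align by parity), which follows directly from $c^{\alpha_l}_\gamma=-c^{\alpha_l}_{\gamma s_l}$; I do not anticipate any serious obstacle beyond this bookkeeping.
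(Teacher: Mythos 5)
Your argument is correct and essentially reproduces the paper's own proof: induction on the length of $\bs$, using the $\iota$-invariant/anti-invariant splitting available since $2$ is invertible, orthogonality of the two parity summands under the standard pairing, and the fact that the pairing on each matched summand (either $\Delta$ against $\Delta$, or $c^{\alpha_l}\Delta$ against $(c^{\alpha_l})^{-1}\Delta$) reduces to twice the pairing over $I(\bs^\prime)$. The only difference is one of exposition: you explicitly decompose an arbitrary test element $z$ by parity before applying the induction hypothesis, whereas the paper compresses the same content into the phrase that the pairing ``pairs perfectly''.
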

 \begin{proof}  For $\bs=\emptyset$ the claim follows immediately from
  the definitions. So suppose that the length of $\bs$ is at least one
  and that the statement holds for the truncated sequence
  $\bs^\prime$. We have $\CX_T(\bs)=\Delta(\CX_T(\bs^\prime))\oplus c^{\alpha_l}\Delta(\CX_T(\bs^\prime))$ and $\CY_T(\bs)=\Delta(\CY_T(\bs^\prime))\oplus (c^{\alpha_l})^{-1}\Delta(\CY_T(\bs^\prime))$.  For
  $x\in\CX_T(\bs^\prime)$ and $y\in\CY_T(\bs^\prime)$ we have
\begin{align*}
(\Delta(x),\Delta(y))&=2(x,y),\\
(c^{\alpha_l}\Delta(x), (c^{\alpha_l})^{-1}\Delta(y))&=2(x,y).
\end{align*}
Since $2$ is supposed to be invertible in $T$ we deduce from the
induction hypothesis that $(\cdot,\cdot)$ pairs either
$\Delta(\CX_T(\bs^\prime))$ and
$\Delta(\CY_T(\bs^\prime))$ or  $c^{\alpha_l}\Delta(\CX_T(\bs^\prime))$ and
$(c^{\alpha_l})^{-1}\Delta(\CX_T(\bs^\prime))$ perfectly. 

On the other hand,  $c^{\alpha_l}$ acts by a diagonal multiplication
with a $\iota$-anti-invariant element, and we see that 
\begin{align*}
(\Delta(\CX_T(\bs^\prime)),
(c^{\alpha_l})^{-1}\Delta(\CY_T(\bs^\prime)) &= 0,\\
(c^\alpha\Delta(\CX_T(\bs^\prime)),
\Delta(\CY_T(\bs^\prime))&=0.
\end{align*}
\end{proof}

\subsection{Stalks and costalks}

The inclusion $\CX_T(\bs)\subset\bigoplus_{\sigma\in I(\bs)} S_T$ 
allows us to define stalks and costalks of $\CX_T(\bs)$  in a way
analogous to the case of $\CZ$-modules (cf.~ Section
\ref{subsec-gendecomp}). Let $\CI$ be
a subset of $\hCW$ and define $I(\bs)_\CI\subset I(\bs)$ as
the set of elements whose evaluation is contained in $\CI$, i.e. $I(\bs)_\CI=\ev^{-1}(\CI)$. We view
$\bigoplus_{\gamma\in I(\bs)_\CI} S_T$ as a direct summand in
$\bigoplus_{\gamma\in I(\bs)} S_T$. For a subspace 
$M$ of $\bigoplus_{\gamma\in I(\bs)}S_T$ we define 
\begin{align*}
M_\CI & := M\cap \bigoplus_{\gamma\in I(\bs)_\CI} S_T\\ 
M^\CI & := \im\left(M\subset \bigoplus_{\gamma\in I(\bs)} S_T\to \bigoplus_{\gamma\in I(\bs)_\CI} S_T\right).
\end{align*}
In particular, we define the {\em stalk} of $M$ at $x$ as
$M^x:=M^{\{x\}}$ and the {\em costalk} of $M$ at $x$ as $M_x:=M_{\{x\}}$.

\begin{lemma}\label{lemma-dualstalkX} Let $\bs$ be a sequence in $\hCS$ and $x\in\hCW$. The
  identification in Lemma \ref{lemma-Xdual} induces an identification
\begin{align*}
\dual(\CX_T(\bs)^x) &=  \left\{(z_\sigma)\in\bigoplus_{\sigma\in I(\bs)_x} Q_T\left|
\,
\begin{matrix}
\sum_\sigma z_\sigma m_\sigma\in S_T \\
\text{ for all $(m_\sigma)\in\CX_T(\bs)^x$}
\end{matrix}
\right\}
\right.\\
&= \CY_T(\bs)_x.
\end{align*}
\end{lemma}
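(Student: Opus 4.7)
The plan is to establish the two equalities separately. The first is a direct analogue of Lemma \ref{lemma-Xdual} with the module $\CX_T(\bs)$ replaced by its stalk $\CX_T(\bs)^x$, and the second identifies the resulting subset of $\bigoplus_{\sigma\in I(\bs)_x} Q_T$ with the costalk $\CY_T(\bs)_x$ by extending elements by zero outside $I(\bs)_x$ and invoking the duality of Lemma \ref{lemma-XYdual}.

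For the first equality, the key observation is that $\CX_T(\bs)^x\otimes_{S_T}Q_T = \bigoplus_{\sigma\in I(\bs)_x} Q_T$. Indeed, by definition $\CX_T(\bs)^x$ is the image of $\CX_T(\bs)$ under the canonical projection $\bigoplus_{\sigma\in I(\bs)}S_T\to\bigoplus_{\sigma\in I(\bs)_x} S_T$; tensoring with $Q_T$ and using Lemma \ref{lemma-Xrootinv} turns the source into $\bigoplus_{\sigma\in I(\bs)} Q_T$, and the projection to the $I(\bs)_x$-component is already surjective. One then repeats the argument of Lemma \ref{lemma-Xdual} verbatim: any $S_T$-linear map $\phi\colon\CX_T(\bs)^x\to S_T$ extends, after tensoring with $Q_T$, to a $Q_T$-linear functional on $\bigoplus_{\sigma\in I(\bs)_x} Q_T$, hence is given by pairing with a unique tuple $(z_\sigma)\in\bigoplus_{\sigma\in I(\bs)_x}Q_T$; the condition that $\phi$ actually land in $S_T$ translates precisely to the displayed pairing condition, and this assignment is a bijection.

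For the second equality, assume as always that $2$ is invertible in $T$ so that Lemma \ref{lemma-XYdual} applies. If $(z_\sigma)\in\CY_T(\bs)_x\subseteq\CY_T(\bs)$, then for any $(m^x_\sigma)\in\CX_T(\bs)^x$ one lifts to some $(m_\sigma)\in\CX_T(\bs)$; Lemma \ref{lemma-XYdual} gives $\sum_{\sigma\in I(\bs)} z_\sigma m_\sigma\in S_T$, and since $z_\sigma=0$ for $\sigma\notin I(\bs)_x$ this sum equals $\sum_{\sigma\in I(\bs)_x} z_\sigma m^x_\sigma$. Conversely, given $(z_\sigma)$ in the middle set, extend it by zero to $\tilde z\in\bigoplus_{\sigma\in I(\bs)} Q_T$. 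For any $(m_\sigma)\in\CX_T(\bs)$ its projection $(m^x_\sigma)$ lies in $\CX_T(\bs)^x$, so $\sum_\sigma\tilde z_\sigma m_\sigma = \sum_{\sigma\in I(\bs)_x} z_\sigma m^x_\sigma\in S_T$; Lemma \ref{lemma-XYdual} then places $\tilde z$ in $\CY_T(\bs)$, and since $\tilde z$ is supported on $I(\bs)_x$ it actually lies in $\CY_T(\bs)_x$. There is no serious obstacle to overcome: everything reduces to bookkeeping using the generic identification of Lemma \ref{lemma-Xrootinv}, the duality of Lemma \ref{lemma-XYdual}, and the fact that the defining projection $\CX_T(\bs)\twoheadrightarrow\CX_T(\bs)^x$ is surjective, which is the only point where one should pause to verify that lifting elements from the stalk back to $\CX_T(\bs)$ is always possible.
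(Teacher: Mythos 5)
Your proof is correct and in substance follows the paper's route: both rely on Lemma \ref{lemma-Xrootinv} to show the stalk becomes all of $\bigoplus_{\sigma\in I(\bs)_x}Q_T$ after tensoring with $Q_T$, and both get the second equality from Lemma \ref{lemma-XYdual}. The paper is slightly slicker on the first step, viewing $\dual(\CX_T(\bs)^x)$ as the subspace of $\dual\CX_T(\bs)$ consisting of functionals vanishing on the kernel of the surjection $\CX_T(\bs)\twoheadrightarrow\CX_T(\bs)^x$, and then reading off the first identity directly from the already-established Lemma \ref{lemma-Xdual}, rather than rerunning its argument; but this is the same idea dressed differently, and your version is equally valid.
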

\begin{proof} Note that $\dual (\CX_T(\bs)^x)\subset\dual \CX_T(\bs)$
  is the subspace of all linear forms $\phi\colon \CX_T(\bs)\to S_T$ that
  vanish on the kernel of the map $\CX_T(\bs)\to\CX_T(\bs)^x$. Using
  Lemma \ref{lemma-Xrootinv} we see that this is identified with the
  set of $(z_\sigma)\in\bigoplus_{\sigma\in I(\bs)}Q_T$ with
  $z_\sigma=0$ if $\sigma\not\in I(\bs)_x$. From this we get the first
  identity. The second identity follows from Lemma \ref{lemma-XYdual}.
\end{proof}

\subsection{Distinguished bases}
By Lemma \ref{lemma-Xfree} and Lemma \ref{lemma-Yfree}, the $S_T$-modules $\CX_{T}(\bs)$ and $\CY_T(\bs)$ are free of 
(ungraded) rank $2^l$.

\begin{definition} Let $E_T(\bs)\subset \CX_T(\bs)$ and $F_T(\bs)\subset \CY_T(\bs)$ be the homogeneous $S_T$-bases that are defined inductively by the following algorithms:
\begin{enumerate}
\item $E_T(\emptyset)=\{1\}\subset \CX_T(\emptyset)=S_T$ and $F_T(\emptyset)=\{1\}\in \CY_T(\emptyset)=S_T$.
\item If $\bs$ is a sequence of length $l\ge 1$, then 
\begin{align*}
E_T(\bs)&:= \Delta(E_T({\bs^\prime}))\cup c^{\alpha_l}\Delta(E_T(\bs^\prime)),\\
F_T(\bs)&:=\Delta(F_T({\bs^\prime}))\cup (c^{\alpha_l})^{-1}\Delta(F_T({\bs^\prime})).
\end{align*}
\end{enumerate}
\end{definition}
Note that for a homomorphism  $T\to T^\prime$  of unital domains  the
identification $\CX_T(\bs)\otimes_T T^\prime=\CX_{T^\prime}(\bs)$ of
Lemma \ref{lemma-bschX} identifies $E_T(\bs)\otimes 1$ with
$E_{T^\prime}(\bs)$.

Using the inclusion $\CX_T(\bs)\subset \bigoplus_{\sigma\in I(\bs)} S_T$ we can view each element of $E_T({\bs})$ as an $I(\bs)$-tuple  of elements  in $S_T$. The following statement is clear from the definition.

\begin{lemma} Each coordinate of each element $e$ of $E_T(\bs)$ is a product of $1/2\deg e$ roots. 
\end{lemma}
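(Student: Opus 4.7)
The plan is to prove the statement by induction on the length $l$ of the sequence $\bs$, mirroring the recursive definition of $E_T(\bs)$.

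For the base case $\bs=\emptyset$, the set $E_T(\emptyset)=\{1\}$ has a single element of degree $0$, whose unique coordinate is the empty product of $0 = \frac{1}{2}\cdot 0$ affine roots, so the claim is vacuous.

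For the inductive step, I would fix a sequence $\bs=(s_1,\dots,s_l)$ and assume the statement for $\bs'=(s_1,\dots,s_{l-1})$. By Definition of $E_T(\bs)$, each element of $E_T(\bs)$ is either $\Delta(e')$ or $c^{\alpha_l}\Delta(e')$ for some $e'\in E_T(\bs')$. In the first case, the $\sigma$-coordinate of $\Delta(e')$ equals $e'_\gamma$, where $\gamma\in I(\bs')$ is the unique index with $\sigma\in\{\gamma,\gamma s_l\}$; since $\Delta$ preserves degree, the induction hypothesis applied to $e'$ immediately yields the claim. In the second case, the $\sigma$-coordinate of $c^{\alpha_l}\Delta(e')$ is $c^{\alpha_l}_{\ev(\sigma)}\cdot e'_\gamma = \ev(\sigma)(\alpha_l)\cdot e'_\gamma$, and $\deg(c^{\alpha_l}\Delta(e'))=\deg e'+2$. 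Since $\alpha_l\in\widehat\Pi$ is a simple affine root and $\ev(\sigma)\in\hCW$, the element $\ev(\sigma)(\alpha_l)\in\hX$ is again an affine root (possibly negative, but the claim is about products of elements of $\hR$). Combining this with the induction hypothesis, $\ev(\sigma)(\alpha_l)\cdot e'_\gamma$ is a product of $\frac{1}{2}\deg e'+1=\frac{1}{2}\deg(c^{\alpha_l}\Delta(e'))$ roots, as required.

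There are no genuine obstacles here; the only minor point to verify is that $\ev(\sigma)(\alpha_l)$ always belongs to $\hR$, which is a standard consequence of the affine Weyl group stabilising the set of affine roots. The lemma is really just a bookkeeping statement about the recursive construction, included in preparation for using the distinguished basis $E_T(\bs)$ to control the arithmetic of the matrix representing $\CB_K(\bs)_x\to\CB_K(\bs)^x$ in subsequent sections.
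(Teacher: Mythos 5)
Your induction is correct and is exactly the reasoning the paper has in mind; the paper simply states "the following statement is clear from the definition" and gives no proof at all, so your write-up just makes that explicit. The one point you flag — that $\ev(\sigma)(\alpha_l)\in\hR$ — is indeed the standard fact that $\hCW$ permutes the set of affine roots, so nothing is missing.
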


We need the following endomorphism $P(\bs)$ on $\bigoplus_{\sigma\in I(\bs)}Q_T$.

\begin{definition} Let $P(\bs)$ be defined for any sequence $\bs$ in
  $\hCS$ inductively by the
  following algorithm:
\begin{enumerate}
\item We let $P(\emptyset)$ be the identity on $Q_T$.
\item If $P(\bs^\prime)$ is already defined then we set 
$$
P({\bs}):=c^{\alpha_l}\circ \Delta^{P({\bs^\prime})}.
$$
\end{enumerate}
\end{definition}

By definition, $P(\bs)$ acts diagonally and on each direct summand
it is given by multiplication with a product of $l$ roots, where $l$
is the length of $\bs$.

\begin{lemma} \label{lemma-EandF}
We have $E_T({\bs})= P(\bs)(F_T(\bs))$. In particular, we have $\CX_T(\bs)=P(\bs)(\CY_T(\bs))$.
\end{lemma}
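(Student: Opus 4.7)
The plan is to proceed by induction on the length $l$ of $\bs$, proving the set-theoretic identity $E_T(\bs)=P(\bs)(F_T(\bs))$ first and then deducing the module identity. The base case $l=0$ is trivial, since $P(\emptyset)$ is the identity on $Q_T$ and $E_T(\emptyset)=F_T(\emptyset)=\{1\}$.

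For the inductive step, the key preliminary observation I would first establish is a structural lemma: for every sequence $\bs$ in $\hCS$, the operator $P(\bs)$ is a \emph{diagonal} multiplication operator on $\bigoplus_{\sigma\in I(\bs)}Q_T$, meaning there is a tuple $(p_\sigma)$ with $P(\bs)(z_\sigma)=(p_\sigma z_\sigma)$. This is itself a short induction: $c^{\alpha_l}$ is diagonal by definition, and if $P(\bs^\prime)$ is diagonal with multipliers $(p^\prime_\gamma)_{\gamma\in I(\bs^\prime)}$, then unwinding the definition of $\Delta^{f}$ via the $\iota$-invariant/anti-invariant splitting shows that $\Delta^{P(\bs^\prime)}$ acts by the \emph{same} scalar $p^\prime_\gamma$ on both of the components of $\bigoplus_{\sigma\in I(\bs)}Q_T$ indexed by $\gamma$ and $\gamma s_l$, hence is diagonal as well. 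An immediate consequence is that $\Delta^{P(\bs^\prime)}$ and $c^{\alpha_l}$ commute, since two diagonal multiplication operators always do.

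With this commutation relation in hand, I split $F_T(\bs)=\Delta(F_T(\bs^\prime))\cup(c^{\alpha_l})^{-1}\Delta(F_T(\bs^\prime))$ and apply $P(\bs)=c^{\alpha_l}\circ\Delta^{P(\bs^\prime)}$ to each part separately. On the first (invariant) half, the definition of $\Delta^{f}$ directly yields $\Delta^{P(\bs^\prime)}(\Delta(y^\prime))=\Delta(P(\bs^\prime)(y^\prime))$; composing with $c^{\alpha_l}$ and invoking the inductive hypothesis $P(\bs^\prime)(F_T(\bs^\prime))=E_T(\bs^\prime)$ produces exactly the subset $c^{\alpha_l}\Delta(E_T(\bs^\prime))$ of $E_T(\bs)$. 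On the second (anti-invariant) half, I use the commutation to rewrite $P(\bs)=\Delta^{P(\bs^\prime)}\circ c^{\alpha_l}$, so that $c^{\alpha_l}$ cancels against $(c^{\alpha_l})^{-1}$, leaving $\Delta^{P(\bs^\prime)}(\Delta(y^\prime))=\Delta(P(\bs^\prime)(y^\prime))$, which by induction lies in $\Delta(E_T(\bs^\prime))$. Comparing with the defining formula $E_T(\bs)=\Delta(E_T(\bs^\prime))\cup c^{\alpha_l}\Delta(E_T(\bs^\prime))$ completes the basis identity.

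The module identity $\CX_T(\bs)=P(\bs)(\CY_T(\bs))$ then follows formally, since $E_T(\bs)$ is an $S_T$-basis of the free module $\CX_T(\bs)$ and $F_T(\bs)$ is an $S_T$-basis of $\CY_T(\bs)$ by Lemmas \ref{lemma-Xfree} and \ref{lemma-Yfree}, and $P(\bs)$ is $S_T$-linear, so matching bases extends to a module equality. The one point where I expect to have to be genuinely careful is the structural lemma above, because $\Delta^{f}$ is defined through the invariant/anti-invariant splitting, which a priori mixes the two components of $I(\bs)$ sitting over each $\gamma\in I(\bs^\prime)$; one must unwrap this splitting and verify directly that for diagonal $f$ the two descriptions agree and assign the same scalar to both components. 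Once that check is done, the remainder of the argument is essentially a bookkeeping exercise.
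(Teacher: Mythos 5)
Your proof is correct and follows essentially the same inductive strategy as the paper: apply $P(\bs)=c^{\alpha_l}\circ\Delta^{P(\bs^\prime)}$ separately to the two halves $\Delta(F_T(\bs^\prime))$ and $(c^{\alpha_l})^{-1}\Delta(F_T(\bs^\prime))$ of $F_T(\bs)$ and invoke the inductive hypothesis on $\bs^\prime$. The only difference is expository: your ``structural lemma'' that $\Delta^{P(\bs^\prime)}$ is a diagonal multiplication operator and therefore commutes with $c^{\alpha_l}$ merely unpacks a step the paper takes for granted (the paper records ``By definition, $P(\bs)$ acts diagonally'' just before the lemma and leaves the resulting cancellation in the anti-invariant case to the reader), so this is a welcome elaboration rather than a different route.
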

\begin{proof} Again we use induction on the length of $\bs$. For $\bs=\emptyset$ we have $E_T(\emptyset)=F_T({\emptyset})=\{1\}$ and $P(\emptyset)=\id$. Suppose that $\bs$ is of length $l\ge 1$ and that the statement is true for the truncated sequence $\bs^\prime$. Then 
\begin{align*}
P({\bs})(\Delta(F_T({\bs^\prime})) & =c^{\alpha_l}\Delta^{P({\bs^\prime})}(\Delta(F_T({\bs^\prime})) \\
&= c^{\alpha_l}(\Delta(P(\bs^\prime)(F_T(\bs^\prime)))) \\
&= c^{\alpha_l}(\Delta(E_T(\bs))).
\end{align*}
Analogously,
\begin{align*}
P({\bs})((c^{\alpha_l})^{-1}\Delta(F_T({\bs^\prime}))) &= c^{\alpha_l}\Delta^{P({\bs^\prime})}((c^{\alpha_l})^{-1}\Delta(F_T({\bs^\prime})))  \\
&= \Delta(P(\bs^\prime)(F_T(\bs^\prime)))\\
&= \Delta(E_T(\bs))
\end{align*}
and the claim follows.
\end{proof}

\subsection{The relation to Bott-Samelson modules}
Now we show that we indeed have given an alternative construction of
the Bott--Samelson modules.
\begin{proposition}\label{prop-BSX}
\begin{enumerate}
\item
Suppose that $2$ is invertible in $T$. Then for all sequences $\bs$, the subset $\CX_T(\bs)$ of
  $\bigoplus_{\sigma\in I(\bs)} S_T$ is stable under the action of
  $\CZ_T$ defined above.
\item If $T=K$ is a field of characteristic $\ne 2$, then $\CX_K(\bs)$
  identifies with the Bott-Samelson module $\CB_K(\bs)$ as a
 $\CZ_K$-module.
\end{enumerate}
\end{proposition}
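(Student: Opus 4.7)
The plan is to prove both parts simultaneously by induction on the length $l$ of $\bs$. The base case $\bs=\emptyset$ is immediate: $\CX_T(\emptyset)=S_T$ carries the natural $\CZ_T$-action by evaluation at $e\in\hCW$, which agrees with $\CM(e)$ when $T=K$.

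The central algebraic input for the inductive step is the direct sum decomposition
$$
\CZ_T=\CZ_T^{s_l}\oplus c^{\alpha_l}\,\CZ_T^{s_l},\qquad\CZ_T^{s_l}:=\{(z_x)\in\CZ_T\mid z_x=z_{xs_l}\text{ for all }x\in\hCW\}.
$$
I will write $z\in\CZ_T$ as $z=z^++c^{\alpha_l}z^-$ via $z^+_x=(z_x+z_{xs_l})/2$ and $z^-_x=(z_x-z_{xs_l})/(2\,x(\alpha_l))$. Invertibility of $2$ together with the fact that the edge between $x$ and $xs_l$ in $\hCG$ carries the label $\pm x(\alpha_l)$ guarantees that both formulas take values in $S_T$; to see that $z^\pm\in\CZ_T^{s_l}$ the plan is to check the structure algebra congruences by splitting into the case $s_{\beta,m}=xs_l x^{-1}$ (in which the two sides are equal by the $s_l$-invariance) and its complement (in which $\beta+m\delta$ and $x(\alpha_l)$ are linearly independent affine roots, hence coprime in the polynomial ring $S_T$ over the integral domain $T$, so the congruence is obtained by a unique-factorization argument applied to $z-z^{s_l}\in\CZ_T$).

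For part (1), the diagonal embedding $\Delta$ is $\CZ_T^{s_l}$-equivariant because $z_{\ev(\gamma s_l)}=z_{\ev(\gamma)s_l}=z_{\ev(\gamma)}$ for $z\in\CZ_T^{s_l}$. Combined with the inductive hypothesis and the commutativity of $\CZ_T$, both summands $\Delta(\CX_T(\bs^\prime))$ and $c^{\alpha_l}\Delta(\CX_T(\bs^\prime))$ of $\CX_T(\bs)$ are $\CZ_T^{s_l}$-stable. Multiplication by $c^{\alpha_l}$ sends the first summand into the second and sends the second into $(c^{\alpha_l})^2\Delta(\CX_T(\bs^\prime))\subseteq\Delta(\CX_T(\bs^\prime))$, using that $(c^{\alpha_l})^2\in\CZ_T^{s_l}$ (which follows from $xs_l(\alpha_l)=-x(\alpha_l)$). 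Combined with the decomposition above, this yields $\CZ_T\cdot\CX_T(\bs)\subseteq\CX_T(\bs)$.

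For part (2), with $T=K$ a field of characteristic $\neq 2$ and $\CX_K(\bs^\prime)\cong\CB_K(\bs^\prime)$ assumed inductively, the $\CZ_K^{s_l}$-equivariance of $\Delta$ provides a well defined $\CZ_K$-linear map
$$
\Phi\colon\CZ_K\otimes_{\CZ_K^{s_l}}\CX_K(\bs^\prime)\longrightarrow\CX_K(\bs),\qquad z\otimes m\mapsto z\cdot\Delta(m).
$$
Surjectivity onto $\Delta(\CX_K(\bs^\prime))+c^{\alpha_l}\Delta(\CX_K(\bs^\prime))=\CX_K(\bs)$ is immediate, and injectivity follows from the freeness of $\CZ_K$ as a $\CZ_K^{s_l}$-module on $\{1,c^{\alpha_l}\}$ (a reformulation of the decomposition above) together with the directness of the sum in Lemma \ref{lemma-Xfree}. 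Hence $\CX_K(\bs)\cong\CZ_K\otimes_{\CZ_K^{s_l}}\CB_K(\bs^\prime)=\vartheta_{s_l}\CB_K(\bs^\prime)=\CB_K(\bs)$. The main obstacle throughout the argument is the verification that $z^-\in\CZ_T^{s_l}$; this is precisely where the invertibility of $2$ in $T$ is indispensable.
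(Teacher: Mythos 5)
Your proof is correct and follows essentially the same route as the paper's: induction on the length of $\bs$, the $\CZ_T^{s_l}$-equivariance of $\Delta$, the decomposition $\CZ_T=\CZ_T^{s_l}\oplus c^{\alpha_l}\CZ_T^{s_l}$ with $(c^{\alpha_l})^2\in\CZ_T^{s_l}$, and (for part (2)) the freeness of $\CZ_K$ over $\CZ_K^{s_l}$ on $\{1,c^{\alpha_l}\}$. The only divergence is that the paper simply cites the decomposition from Lemma 5.1 of \cite{FieTAMS} rather than re-deriving it as you do; your sketch of it is fine in the cases actually needed, though the coprimality step tacitly uses that $S_T$ is a UFD, which holds when $T$ is a field or $\DZ$ but not for an arbitrary integral domain.
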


\begin{proof}  We prove the proposition by induction on the length of
  $\bs$. For $\bs=\emptyset$ both statements immediately follow from
  the definitions. So suppose that $\bs=(s_1,\dots, s_l)$ with $l\ge 1$ and that
  we have proven the claims for $\bs^\prime$. Now note that $\Delta\colon
\bigoplus_{\gamma\in I(\bs^\prime)}S_T\to \bigoplus_{\sigma\in I(\bs)} S_T$
is actually a homomorphism of $\CZ_T^{s_l}$-modules. From our induction
assumption we hence get that $\Delta(\CX_T(\bs^\prime))$ and
$c^{\alpha_l}(\Delta(\CX_T(\bs^\prime)))$ are stable under the action
of $\CZ_T^{s_l}$. 

By Lemma 5.1 in \cite{FieTAMS} we have
$\CZ_T=\CZ_T^{s_l}\oplus c^{\alpha_l}\CZ_T^{s_l}$ under the assumption
that $2$ is invertible in $T$. Note that
$(c^{\alpha_l})^2\in\CZ_T^{s_l}$. Hence giving a $\CZ_T$-module
structure on an abelian group $M$ is the same as giving a
$\CZ_T^{s_l}$-module structure on $M$ together with a
$\CZ_T^{s_l}$-linear endomorphism $f$ on $M$ such that $f^2$ coincides
with the action of $(c^{\alpha_l})^2$. From this, statement (1)
for $\CX_T(\bs)=\Delta(\CX_T(\bs^\prime)\oplus c^{\alpha_l}\Delta(\CX_T(\bs^\prime))$ easily follows by defining the map $f$ on the
components as $\Delta(\CX_T(\bs^\prime))\to c^{\alpha_l}\Delta(\CX_T(\bs^\prime))$,
$x\mapsto c^{\alpha_l}x$ and $c^{\alpha_l}\Delta(\CX_T(\bs^\prime))\to \Delta(\CX_T(\bs^\prime))$,
$c^{\alpha_l}x\mapsto (c^{\alpha_l})^2x$. 
Similarly,  statement (2) follows after we
considered the  identities
\begin{align*}
\CB_K(\bs)=\CZ\otimes_{\CZ^{s_l}}\CB_K(\bs^\prime) & =  (\CZ_K^{s_l}\oplus
c^{\alpha_l}\CZ_K^{s_l})\otimes_{\CZ^{s_l}}\CB_K(\bs^\prime) \\
& =  \CB_K(\bs^\prime)\oplus c^{\alpha_l}\CB_K(\bs^\prime).
\end{align*}
\end{proof}

Finally, we compare the stalks of $\CB_K(\bs)$ and the stalks of $\CX_K(\bs)$.

\begin{lemma}\label{lemma-stalksX} Let $\bs$ be a sequence in $\hCS$ and $x\in\hCW$. 
 If $T=K$ is a field of characteristic $\ne 2$, then the
  isomorphism $\CX_K(\bs)\cong\CB_K(\bs)$ of $\CZ_K$-modules induces
  isomorphisms
$\CX_K(\bs)_x\cong\CB_K(\bs)_x$  and
$\CX_K(\bs)^x\cong\CB_K(\bs)^x$.
\end{lemma}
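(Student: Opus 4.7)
The plan is to use the intrinsic characterization of stalks and costalks given in Lemma \ref{lemma-stalks}: the costalk $\CM_x$ is the largest submodule of $\CM$ on which every $(z_y)\in\CZ_K$ acts by multiplication with $z_x$, and the stalk $\CM^x$ is the largest such quotient. Since the isomorphism $\CX_K(\bs)\cong\CB_K(\bs)$ established in Proposition \ref{prop-BSX} is an isomorphism of $\CZ_K$-modules, any construction phrased purely in terms of the $\CZ_K$-action must correspond under it. It therefore suffices to show that the stalks and costalks of $\CX_K(\bs)$, which are defined via the embedding $\CX_K(\bs)\subset\bigoplus_{\sigma\in I(\bs)}S_K$, agree with the intrinsic $\CZ_K$-module descriptions applied to $\CX_K(\bs)$.

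For this, I would first observe that, by construction, $(z_y)\in\CZ_K$ acts on the $\sigma$-component of $\bigoplus_{\sigma\in I(\bs)}S_K$ as multiplication by $z_{\ev(\sigma)}$. Localizing this action at an appropriate multiplicative subset (as in Section \ref{subsec-gendecomp}, working with $\CI=J(\bs)$, which is finite and for which $(K,\CI)$ is a GKM-pair in the relevant range; more generally one inverts the roots that distinguish the values $z_y$ for $y\in J(\bs)$) and combining with Lemma \ref{lemma-Xrootinv}, we obtain
\[
\CX_K(\bs)_{Q_\CI}=\bigoplus_{\sigma\in I(\bs)}Q_\CI,
\]
and the canonical $\CZ_K$-isotypic decomposition of the right-hand side groups summands by $\ev(\sigma)$: the component belonging to $y\in\CI$ is exactly $\bigoplus_{\ev(\sigma)=y}Q_\CI$. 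Because generic decomposition is functorial in $\CZ_K$-module maps (Lemma in Section \ref{subsec-gendecomp}), the $\CZ_K$-isomorphism $\CX_K(\bs)\cong\CB_K(\bs)$ matches this decomposition with the one on $\CB_K(\bs)_{Q_\CI}$, and in particular identifies the $y$-isotypic generic pieces.

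Now I would compare the two definitions of stalk and costalk at $x$. For the costalk, the definition on the $\CX_K(\bs)$ side gives
\[
\CX_K(\bs)_{\{x\}}=\CX_K(\bs)\cap\bigoplus_{\ev(\sigma)=x}S_K
 =\CX_K(\bs)\cap\CX_K(\bs)_{Q_\CI}^{x},
\]
which is precisely the $\CZ_K$-intrinsic description of $\CB_K(\bs)_x$ transported across the isomorphism. For the stalk, one notes that the map $\CX_K(\bs)\to\bigoplus_{\ev(\sigma)=x}S_K$ factors through the injection $\bigoplus_{\ev(\sigma)=x}S_K\hookrightarrow\CX_K(\bs)_{Q_\CI}^{x}$, so its image coincides with the image of $\CX_K(\bs)$ in $\CX_K(\bs)_{Q_\CI}^{x}$, which is the intrinsic stalk. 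Applying the $\CZ_K$-isomorphism yields the claimed isomorphisms $\CX_K(\bs)^{x}\cong\CB_K(\bs)^{x}$ and $\CX_K(\bs)_{x}\cong\CB_K(\bs)_{x}$.

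The main obstacle I anticipate is purely bookkeeping: one must make sure that ``generic decomposition'' on both sides is set up over a common localization, so that the isotypic components literally correspond under the $\CZ_K$-isomorphism rather than only up to a further localization. Once a single $Q_\CI$ large enough to split $\CB_K(\bs)_{Q_\CI}$ (equivalently, to split $\bigoplus_\sigma S_K$ according to $\ev$) is fixed, the remainder of the argument is formal.
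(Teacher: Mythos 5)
Your proof is correct and takes essentially the same route as the paper: the paper's own proof is the one-liner ``follows immediately from the definition of the action of $\CZ_K$ on $\CX_K(\bs)$ and Lemma \ref{lemma-stalks},'' and what you wrote is a careful unwinding of exactly that. The one point worth flagging is that the statement as printed omits the GKM hypothesis needed to even define $\CB_K(\bs)_x$ and $\CB_K(\bs)^x$ via the generic decomposition (and to apply Lemma \ref{lemma-stalks}); you correctly supply it, and in the paper it is implicit from the contexts where the lemma is invoked.
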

\begin{proof} The statement follows immediately from the definition of
  the action of $\CZ_K$ on
  $\CX_K(\bs)$ and Lemma \ref{lemma-stalks}.
 \end{proof}

\subsection{Integral matrices}
Now we want to prove Theorem \ref{theorem-lefdata}, so we fix a
sequence $\bs=(s_1,\dots,s_l)$ in $\hCS$ and a field $k$ of positive
characteristic such that $\ch\,k >U(\bs)$. Note that this implies
that  $(k,\bs)$ is a GKM-pair.   Let us
also fix an element $x$ of $\hCW$ that is contained in
$\ev(I(\bs))$. We want to prove that the Lefschetz data of
$\CB_k(\bs)$ and $\CB_\DQ(\bs)$ coincide at $x$. 
For this we construct a matrix $X$ with entries in $S_\DZ$ that
describes both the inclusions $\CB_k(\bs)_x\to\CB_k(\bs)^x$ and
$\CB_\DQ(\bs)_x\to\CB_\DQ(\bs)^x$ and then give an estimate on the
minors of $X$. 

In the following $K$
denotes either the field $k$ or the rational numbers $\DQ$.
Recall that we have a distinguished basis $E_\DZ(\bs)$ of
$\CX_\DZ(\bs)$. We next want to choose a subset $E(x)$ of $E_\DZ(\bs)$
that has the property that the images of its elements in both
$\CX_k(\bs)^x$ and $\CX_\DQ(\bs)^x$ form a basis. (We consider  the
natural maps $\CX_\DZ(\bs)\to\CX_\DZ(\bs)\otimes_\DZ K=\CX_K(\bs)\to\CX_K(\bs)^x$ for $K=k$ and
$K=\DQ$).

\begin{lemma}\label{lemma-extbas} There is a subset $E(x)$  of $E_\DZ(\bs)$  with
  the property that the images of
 its elements  in $\CX_{K}(\bs)^x$ form an $S_K$-basis for both $K=k$ and $K=\DQ$.
\end{lemma}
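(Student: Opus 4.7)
The plan is to use graded Nakayama to reduce the problem to a basis extraction in a finite-dimensional vector space in each graded degree, and then to pass a common subset between characteristic zero and characteristic $p$ through an integral model. By Corollary~\ref{cor-specmod} combined with Lemma~\ref{lemma-stalksX}, $\CX_K(\bs)^x$ is a graded free $S_K$-module, and by Lemma~\ref{lemma-rankind} its graded rank $\rho(v)=\sum_n c_n v^n$ is the same for $K=k$ and $K=\DQ$ (both pairs are GKM because $\ch k > U(\bs) \ge N(\bs)$). Setting $W_K := \CX_K(\bs)^x/S_K^+\CX_K(\bs)^x$, graded Nakayama then shows that a homogeneous subset $E(x)\subset E_\DZ(\bs)$ projects to an $S_K$-basis of $\CX_K(\bs)^x$ if and only if, in every degree $n$, its degree-$n$ part has cardinality $c_n$ and projects to a $K$-basis of $W_K^{(n)}$.

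Next, I would introduce the integral model $W_\DZ := \CX_\DZ(\bs)^x/S_\DZ^+\CX_\DZ(\bs)^x$ and establish the base-change compatibility $W_\DZ \otimes_\DZ K \cong W_K$ for both $K \in \{k, \DQ\}$. This uses that $\CX_\DZ(\bs)^x$ is $\DZ$-torsion-free (being a subgroup of the free abelian group $\bigoplus_{\sigma \in I(\bs)_x} S_\DZ$) together with the identity $S_\DZ^+ \otimes_\DZ K = S_K^+$, via the standard $\mathrm{Tor}$ computation. Consequently each $W_\DZ^{(n)}$ is a finitely generated abelian group satisfying $W_\DZ^{(n)} \otimes \DQ \cong \DQ^{c_n}$ and $W_\DZ^{(n)} \otimes \DF_p \cong \DF_p^{c_n}$ with $p := \ch k$; matching Smith invariants then forces $W_\DZ^{(n)} \cong \DZ^{c_n} \oplus T_n$ for a finite torsion group $T_n$ with $|T_n|$ coprime to $p$.

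Write $G_n \subset E_\DZ(\bs)$ for the degree-$n$ elements. Because $E_\DZ(\bs)$ $S_\DZ$-generates $\CX_\DZ(\bs)^x$ and any product by a positive-degree element of $S_\DZ$ becomes zero in $W_\DZ$, the images of $G_n$ already $\DZ$-generate $W_\DZ^{(n)}$. Thus one may pick $S_n \subset G_n$ of cardinality $c_n$ whose mod-$p$ images form an $\DF_p$-basis of $W_\DZ^{(n)} \otimes \DF_p \cong \DF_p^{c_n}$, and define $E(x) := \bigsqcup_n S_n$.

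The hard step will be verifying that the same $S_n$ also gives a $\DQ$-basis of $W_\DZ^{(n)} \otimes \DQ$; by cardinality, it suffices to rule out a nontrivial $\DQ$-linear relation among the images. Clearing denominators yields $\sum_{s \in S_n} c_s \cdot s = 0$ in $W_\DZ^{(n)} \otimes \DQ$ with $c_s \in \DZ$ and $\gcd\{c_s\} = 1$; lifting to $W_\DZ^{(n)}$, the element $\sum c_s s$ lies in the kernel of the map $W_\DZ^{(n)} \to W_\DZ^{(n)} \otimes \DQ$, i.e.\ in $T_n$. Since $|T_n|$ is coprime to $p$, the image of $T_n$ in $W_\DZ^{(n)} \otimes \DF_p$ vanishes, so reducing mod $p$ yields $\sum \bar c_s \bar s = 0$ there; the $\DF_p$-linear independence of the chosen $S_n$ then forces $p \mid c_s$ for every $s$, contradicting $\gcd\{c_s\} = 1$. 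The only real technical hurdle is the base-change identification $W_\DZ \otimes K \cong W_K$ from the second paragraph; once that is granted, the rest of the argument is elementary finite-abelian-group theory.
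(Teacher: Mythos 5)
Your strategy is workable, but the step you yourself flag as the hurdle---the base-change identification $W_\DZ\otimes_\DZ K\cong W_K$---is not established by the justification you give. That $\CX_\DZ(\bs)^x$ is $\DZ$-torsion-free only makes $\CX_\DZ(\bs)^x$ \emph{flat} over $\DZ$; it does not make it a \emph{pure} $\DZ$-submodule of $\bigoplus_{\sigma\in I(\bs)_x}S_\DZ$, which is what you actually need. The relevant obstruction is $\operatorname{Tor}_1^\DZ\bigl(\bigoplus S_\DZ/\CX_\DZ(\bs)^x,\,k\bigr)$, i.e.\ the $p$-torsion of the \emph{quotient}, and torsion-freeness of the submodule says nothing about that. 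The base-change statement is nevertheless true, but for a reason you already have in hand: by Lemma~\ref{lemma-stalksX} and Lemma~\ref{lemma-rankind} the graded ranks of $\CX_k(\bs)^x$ and $\CX_\DQ(\bs)^x$ coincide; since $\CX_\DZ(\bs)^x$ is free over $\DZ$ in each degree with $\CX_\DZ(\bs)^x\otimes_\DZ\DQ=\CX_\DQ(\bs)^x$ by $\DQ$-flatness, the canonical \emph{surjection} $\CX_\DZ(\bs)^x\otimes_\DZ k\to\CX_k(\bs)^x$ is between $k$-spaces of equal finite dimension in every degree and is therefore an isomorphism. You should route the argument through this rank coincidence rather than through torsion-freeness plus ``the standard Tor computation.''

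Once that is repaired, your Nakayama reduction, the Smith-normal-form analysis of $W_\DZ^{(n)}$, and the coprimality step all go through. But compare with the paper's proof, which avoids $W$ entirely: pick $E(x)\subset E_\DZ(\bs)$ whose image in $\CX_k(\bs)^x$ is an $S_k$-basis (possible since $E_\DZ(\bs)$ generates and the stalk is graded free); any nontrivial $S_\DZ$-linear relation on $E(x)$ inside the $\DZ$-torsion-free ambient $\bigoplus_{\sigma\in I(\bs)_x}S_\DZ$ may be cleared of common $p$-factors and then reduced mod $p$ to contradict $k$-independence; hence $E(x)$ is $S_\DZ$-linearly independent, so $S_\DQ$-linearly independent, and the rank coincidence then forces it to be a $\DQ$-basis. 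Torsion-freeness is used there precisely where it suffices---to lift $k$-independence to $\DZ$-independence inside an ambient free module---rather than where it does not, namely to prove purity of a submodule.
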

\begin{proof} The images of the elements of $E_\DZ(\bs)$ generate the
  $S_k$-module $\CX_k(\bs)^x$. Since this is a graded free
  $S_k$-module we can find a subset $E(x)$ of $E(\bs)$ such
  that its image forms a basis. Then $E(x)$ is $S_\DZ$-linearly independent
  in $\CX_\DZ(\bs)^x$ since we have a commutative diagram

\centerline{
\xymatrix{
\CX_\DZ(\bs)\ar[d]\ar[r] &\CX_\DZ(\bs)^x\ar[d]\\
\CX_k(\bs)\ar[r]&\CX_k(\bs)^x.
}
}
\noindent 
Hence $E(x)$ is $S_\DQ$-linearly independent in
  $\CX_\DQ(\bs)^x$. Since the graded ranks of $\CX_k(\bs)^x$ and
  $\CX_\DQ(\bs)^x$ coincide, by Lemma \ref{lemma-stalksX} and Lemma
  \ref{lemma-rankind}, the image of  $E(x)$ forms a basis of $\CX_\DQ(\bs)^x$
  as well.
\end{proof}

We now fix such a subset $E(x)$ of $E_\DZ(\bs)$. From Lemma
\ref{lemma-Xrootinv} we deduce that the ungraded rank of
$\CX_K(\bs)^x$ equals the order of $I(\bs)_x$ which coincides with the
number $r_x$ defined in Section \ref{subsec-maintheo}. Let us identify
$I(\bs)_x$ with $\{1,\dots,r_x\}$ and let us choose an enumeration
$\{v_1,\dots,v_{r_x}\}$ of the images of the
elements of $E(x)$ in $\CX_\DZ(\bs)^x$. Let $E\subset
S_\DZ^{r_x\times r_x}$ be the matrix $(v_1,\dots,v_{r_x})^T$, i.e.~the
matrix with row vectors $v_1,\dots, v_{r_x}$.

We apply the base change $S_\DZ\to S_K$ and consider the matrix
$E$ now as an element in $S_K^{r_x\times r_x}$. Its row vectors  form a basis
of $\CX_K(\bs)^x$ by construction. By Lemma \ref{lemma-Xrootinv}, the matrix
$E$ is invertible over $Q_K$. The row vectors of the matrix
$(E^{-1})^T\in Q_K^{r_x\times r_x}$ now form a basis of the space
$$
\left\{(z_\sigma)\in\bigoplus_{\sigma\in I(\bs)_x} Q_K\left|
\,
\begin{matrix}
\sum_\sigma z_\sigma m_\sigma\in S_K \\
\text{ for all $(m_\sigma)\in\CX_K(\bs)$}
\end{matrix}
\right\}
\right.
$$
which we have identified, as a subspace of $\bigoplus_{\sigma\in I(\bs)_x} Q_K$, with $\CY_K(\bs)_x$ in Lemma \ref{lemma-dualstalkX}. 

Recall the endormorphism
$P(\bs)$ on $\bigoplus_{\sigma\in I(\bs)}
Q_\DZ$. It acts diagonally and so
we can consider its restriction $P(\bs)^x$ to the direct summand
$\bigoplus_{\sigma\in I(\bs)_x} Q_\DZ$. We consider the latter
endomorphism as   a diagonal $r_x\times r_x$-matrix which we denote by $P$
for simplicity. 
Now Lemma \ref{lemma-dualstalkX} and Lemma \ref{lemma-EandF} show that the row vectors
$w_1,\dots,w_{r_x}$ of the matrix $(E^{-1})^TP$ form a basis of the
$S_K$-module $\CX_K(\bs)_x$. 

Hence we have constructed bases $(w_i)$ and $(v_i)$ of
$\CX_K(\bs)_x$ and $\CX_K(\bs)^x$. Now consider the matrix
$$
X=(x_{ij}):=(E^{-1})^T P E^{-1}.
$$
Then we have
$$
w_i=\sum_{j=1}^{r_x} x_{ij} v_j.
$$
The matrix $X$ already appears in H\"arterich's paper \cite{Haert} in
which he studies the push-forward of the equivariant constant sheaf on a
Bott-Samelson variety to a Schubert variety, cf.~ the remark following
Proposition 6.8 in {\em loc.~cit.}

Recall that we defined $\tau\colon S_\DZ\to \DZ[t]$  by the property
$\tau(\alpha)=t$ for all simple affine roots $\alpha$. We denote by $\tau$
 also the homomorphism $S_K\to K[t]$ obtained by base
change.  Recall that we assume $\ch\, k> N(\bs)$. In order to determine the Lefschetz data we have to
study the inclusion $\ol{\CX}_K(\bs)_x\subset\ol\CX_K(\bs)^x$ which is
given by the image $\ol X$ of the matrix $X$ in $K[t]^{r_x\times r_x}$
under the map $\tau$. In order to prove Theorem \ref{theorem-lefdata} it is  enough to prove the following claim:

{\em Under the assumption $\ch\, k>U(\bs)$, a minor of
the matrix $X$ vanishes in $k[t]$ if and only if it vanishes in
$\DQ[t]$.
}

For convenience we study, instead of $X$,  the matrix
$$
X^\prime=(x_{ij}^\prime):=\det(E)^2X^\prime= (\det E) ( E^{-1})^T P (\det  E)
 E^{-1}.
$$
It has the advantage that its entries are
elements in $S_\DZ$, and it is certainly enough to prove the above
claim for $X^\prime$ instead of $X$.

Let us denote by $d_i$ the half degree of $v_i$. Set
$d=\sum_{i=1}^{r_x} d_i$. Then we have:
\begin{lemma}\label{lemma-estxij} The element $x^\prime_{ij}\in S_\DZ$ can be written as a sum of
  $r_x!(r_x-1)!$ products of $2d-d_i-d_j+l$ roots.
\end{lemma}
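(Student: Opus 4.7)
The plan is to compute $x'_{ij}$ directly by recognising that $(\det E)\,E^{-1}$ is the classical adjugate $\operatorname{adj}(E)$, and then estimating via the Leibniz formula. Three factorization facts are needed. First, each basis vector $v_i\in E(x)\subset E_\DZ(\bs)$ is homogeneous of half-degree $d_i$, so by the earlier lemma that every coordinate of an element $e\in E_\DZ(\bs)$ is a product of $\frac{1}{2}\deg e$ roots, every entry in row $i$ of the matrix $E$ is a product of exactly $d_i$ roots. Second, by the remark immediately following the definition of $P(\bs)$, each diagonal entry $P_{kk}$ is a product of exactly $l$ roots. Third, the $(k,j)$-entry of $\operatorname{adj}(E)$ is, up to sign, the $(r_x-1)\times(r_x-1)$ minor of $E$ obtained by deleting row $j$ and column $k$.

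Next I would combine these ingredients. Since $(\det E)\,E^{-1}=\operatorname{adj}(E)$ and hence $(\det E)(E^{-1})^T=\operatorname{adj}(E)^T$, we have
$$X' \;=\; \operatorname{adj}(E)^T\, P\, \operatorname{adj}(E),\qquad x'_{ij} \;=\; \sum_{k=1}^{r_x}\operatorname{adj}(E)_{ki}\, P_{kk}\,\operatorname{adj}(E)_{kj}.$$
Expanding each cofactor by the Leibniz formula, $\operatorname{adj}(E)_{ki}$ becomes a signed sum of $(r_x-1)!$ monomials, each a product of one entry from every row $m\neq i$, and hence a product of $\sum_{m\neq i}d_m = d-d_i$ roots. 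Analogously $\operatorname{adj}(E)_{kj}$ expands as a sum of $(r_x-1)!$ products of $d-d_j$ roots.

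Finally, tallying: for each fixed $k$ the triple product $\operatorname{adj}(E)_{ki}\,P_{kk}\,\operatorname{adj}(E)_{kj}$ expands into $(r_x-1)!\cdot(r_x-1)!$ monomials, each a product of $(d-d_i)+l+(d-d_j)=2d-d_i-d_j+l$ roots. Summing over the $r_x$ choices of $k$ yields $r_x\cdot((r_x-1)!)^2=r_x!(r_x-1)!$ monomials of the required form, which is exactly the claim.

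There is no real obstacle beyond index bookkeeping; the statement is a purely combinatorial accounting once the adjugate identification, the homogeneity of $E_\DZ(\bs)$, and the root-factorization of $P(\bs)$ are in place. The only mild care needed is to match the deleted rows in the two adjugate expansions correctly (row $i$ in $\operatorname{adj}(E)_{ki}$ and row $j$ in $\operatorname{adj}(E)_{kj}$), so that the degree count comes out to $d-d_i$ and $d-d_j$ respectively rather than being swapped.
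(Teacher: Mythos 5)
Your proof is correct and follows essentially the same route as the paper: identify $(\det E)E^{-1}$ with the adjugate, read off the cofactor degrees from the row-homogeneity of $E$, note that the diagonal entries of $P$ contribute $l$ roots, and tally via the Leibniz expansion of the two cofactors together with the sum over the intermediate index $k$. The paper's proof is slightly more telegraphic but uses exactly these steps and the same index bookkeeping.
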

\begin{proof} Recall that the $ij$-entry $v_{ij}$ of the matrix
  $E$ is a product of $d_i$ roots. The $ij$-entry of $(\det
  E) E^{-1}$ is the determinant of the matrix obtained from $E$ by deleting the $j$-th row and the $i$-th column. Hence it can be written as a sum of $(r_x-1)!$ products of
  $d-d_j$ roots. It follows that the $ij$-entry of $P(\det
  E) E^{-1}$ can  be written as a sum of $(r_x-1)!$ products of
  $d-d_j+l$ roots. Finally, the $ij$-entry $x^\prime_{ij}$ of $X^\prime$ can be
  written as a sum of $r_x(r_x-1)!^2=r_x!(r_x-1)!$ products of $2d-d_i-d_j+l$ roots. 
\end{proof}

Let $J$ and $J^\prime$ be subsets of
$\{1,\dots, r_x\}$ of the same size $s$ and denote by
$\xi_{JJ^\prime}\in S_\DZ$ the corresponding minor of the matrix
$X^\prime$. Set $d_J=\sum_{i\in J} d_i$ and $d_{J^\prime}=\sum_{i\in
  J^\prime}d_i$.  Then we have:

\begin{lemma}\label{lemma-minors} The element $\xi_{JJ^\prime}$ can be written as a sum of
  $s!(r_x!(r_x-1)!)^s$ products of  $s(2d+l)-d_J-d_{J^\prime}$ roots. 
\end{lemma}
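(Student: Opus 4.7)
The proof is a direct book-keeping argument, obtained by expanding the determinant that defines $\xi_{JJ^\prime}$ and applying the previous lemma term-by-term.

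The plan is to begin with the Leibniz formula. Writing $J=\{i_1,\dots,i_s\}$ and $J^\prime=\{j_1,\dots,j_s\}$, the minor is the determinant of the $s\times s$ submatrix $(x^\prime_{i_k,j_l})_{k,l}$, so
$$
\xi_{JJ^\prime}=\sum_{\sigma}\operatorname{sgn}(\sigma)\prod_{k=1}^s x^\prime_{i_k,j_{\sigma(k)}},
$$
where $\sigma$ runs over the $s!$ permutations of $\{1,\dots,s\}$. This gives the factor $s!$ in the claimed count.

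For each fixed permutation $\sigma$, I apply Lemma \ref{lemma-estxij} to each of the $s$ factors in the product. Each entry $x^\prime_{i_k,j_{\sigma(k)}}$ is a sum of $r_x!(r_x-1)!$ products of roots, and expanding the product of the $s$ factors distributes over these summands to yield a sum of $(r_x!(r_x-1)!)^s$ products of roots. Combining with the $s!$ permutations accounts for the total count $s!(r_x!(r_x-1)!)^s$ of summands.

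It remains to count the number of roots in each resulting product. By Lemma \ref{lemma-estxij}, the factor $x^\prime_{i_k,j_{\sigma(k)}}$ contributes a product of $2d-d_{i_k}-d_{j_{\sigma(k)}}+l$ roots. Summing over $k=1,\dots,s$ and using $\sum_k d_{i_k}=d_J$ and $\sum_k d_{j_{\sigma(k)}}=d_{J^\prime}$ (since $\sigma$ is a permutation), each resulting product involves exactly $s(2d+l)-d_J-d_{J^\prime}$ roots, as required. There is no genuine obstacle here; the only subtle point is tracking that the index $\sigma(k)$ ranges over all of $\{1,\dots,s\}$ so that the $d_{j_{\sigma(k)}}$ terms indeed sum to $d_{J^\prime}$ independently of $\sigma$.
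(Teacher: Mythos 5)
Your proof is correct and takes essentially the same approach as the paper: expand the determinant via the Leibniz formula into $s!$ permutation terms, then apply Lemma \ref{lemma-estxij} to each of the $s$ entries and count. The paper states this more tersely but the argument is identical; your added remark about $\sum_k d_{j_{\sigma(k)}}=d_{J'}$ being independent of $\sigma$ is the one detail worth making explicit, and you handle it correctly.
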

\begin{proof} Each entry of the submatrix of $X^\prime$ corresponding to
  $J$ and $J^\prime$ is a sum of $r_x!(r_x-1)!$ products of roots by Lemma
  \ref{lemma-estxij}. Hence $\xi_{JJ^\prime}$ can be written as a sum
  of $s!(r_x!(r_x-1)!)^s$ products of roots. Moreover, the $ij$-entry of
  $X^\prime$ is a sum of products of $2d+l-d_i-d_j$ roots. So each
  summand in $\xi_{JJ^\prime}$ is a product of
  $s(2d+l)-d_J-d_{J^\prime}$ roots.
\end{proof}

One should keep in mind that the above estimate is very
rough. For example, up to a power of $2$, the determinant of the
matrix $X^\prime$ is a product of roots by Lemma \ref{lemma-Xrootinv}!

The map $\tau\colon S_\DZ\to\DZ[t]$ maps a positive root $\alpha$ to
$\height(\alpha)t$. By definition of $N(\bs)$ we have $\height(\alpha)\le
N(\bs)$ for all roots that occur in the situation of Lemma
\ref{lemma-minors}. If we denote by $\ol \xi_{JJ^\prime}$ the image
of $\xi_{JJ^\prime}$ in $\DZ[t]$, then we can deduce from Lemma
\ref{lemma-minors} that
$\ol\xi_{JJ^\prime}=a_{JJ^\prime}t^{s(2d+l)-d_J-d_{J^\prime}}$ with
$$
|a_{JJ^\prime}|\le s!(r_x!(r_x-1)!)^s
N(\bs)^{s(2d+l)-d_J-d_{J^\prime}}.
$$
Since $r_x\le r$, $s\le r$ and $d_J, d_{J^\prime}\ge 0$ we have
$$
|a_{JJ^\prime}|\le r!(r!(r-1)!N(\bs)^{2d+l})^r=U(\bs).
$$

Hence, for $\ch\,k>U(\bs)$ the minor $\xi_{JJ^\prime}$ vanishes in
$k[t]$ if and only if it vanishes in $\DQ[t]$, which is the statement
we wanted to show in order to prove Theorem \ref{theorem-lefdata}.


\end{document}